\documentclass[12pt, a4paper]{amsart}
\usepackage[hmargin=30mm, vmargin=25mm, includefoot, twoside]{geometry}
\usepackage[bookmarksopen=true]{hyperref}

\usepackage[alphabetic]{amsrefs}

\usepackage{amsfonts,amssymb,verbatim}
\usepackage{latexsym}
\usepackage{mathrsfs}
\usepackage{stmaryrd}
\usepackage{xspace}
\usepackage{enumerate, paralist}
\usepackage{graphicx}
\usepackage[all,cmtip]{xy}
\usepackage[all]{xypic}

\usepackage[usenames,dvipsnames]{color}
 \usepackage{txfonts, pxfonts}

\newtheorem{te}{Theorem}[section]
\newtheorem{cor}[te]{Corollary}

\newtheorem{p}[te]{Proposition}
\theoremstyle{definition}
\newtheorem{de}[te]{Definition}

\theoremstyle{remark}
\newtheorem{rem}[te]{Remark}

\newtheorem{ex}[te]{Example}

\numberwithin{equation}{section}

\newcommand{\Z}{\mathbb{Z}}

\newcommand{\nz}{\mathbb{N}}

\newcommand{\Ker}{\text{Ker}\,}

\newcommand{\vp}{\varphi}
\newcommand{\ve}{\varepsilon}

\begin{document}
\title{Constraint metric approximations and equations in groups}

\author{Goulnara Arzhantseva}
\address[G. Arzhantseva]{Universit\"at Wien, Fakult\"at f\"ur Mathematik\\
Oskar-Morgenstern-Platz 1, 1090 Wien, Austria.}
\email{goulnara.arzhantseva@univie.ac.at}

\author{Liviu P\u aunescu}
\address[L. P\u aunescu]{Institute of Mathematics of the Romanian Academy, 21 Calea Grivitei Street, 010702 Bucharest, Romania}
\email{liviu.paunescu@imar.ro}
\date{}
\subjclass[2010]{20Fxx, 20F05, 20B30, 15A27}
\keywords{Metric ultraproducts, symmetric groups, sofic groups, centralizer, Loeb space, space of sofic representations.}

\thanks{G.A.\ was partially supported by her ERC grant ANALYTIC no.\ 259527, and by the Simons Foundation and by EPSRC grant no EP/K032208/1, while visiting the Isaac Newton Institute (Cambridge). L.P. was supported by grant number PN-II-RU-TE-2014-4-0669 of the Romanian National Authority for Scientific Research, CNCS - UEFISCDI} 
\baselineskip=16pt

\begin{abstract}
We introduce notions of a constraint metric approximation and of a constraint stability of a metric approximation. This is done in the language 
of group equations with coefficients. We give an example of a group which is not constraintly sofic. 
In building it, we find a sofic representation of free group with trivial commutant among extreme points of the convex structure on the space of sofic representations.

We consider the centralizer equation in permutations as an instance of this new general setting.
We characterize permutations $p\in S_k$ whose centralizer is stable in permutations with respect to the normalized Hamming distance, that is,  
a permutation which almost centralizes $p$ is near a centralizing permutation. This answers a question of
Gorenstein-Sandler-Mills~(1962). 

\end{abstract}
\maketitle

\section{Introduction}

The concept of an equation is fundamental in mathematics.
Usual attributes of an equation are variables and coefficients, and one
searches for a solution subject to some admissibility condition or constraint.   
In group theory,  the literature on equations with solutions in (and over) groups is immense.

We deal with \emph{almost} solutions of equations in groups, where `almost' is expressed in terms of a chosen bi-invariant metric on a group. 
This is very much relevant to the currently fast-developing area of
metric approximations of groups. Notable examples of metrically approximable groups are
sofic and hyperlinear groups. Sofic groups  are groups approximable by $(S_k, d_{H})_{k\in \mathbb N}$, symmetric groups of finite degree endowed with the normalized Hamming distance $d_H$.
They have been introduced by Gromov in the context of Gottschalk's surjunctivity conjecture in symbolic dynamics and have been named `sofic' by Weiss.
Hyperlinear groups are groups approximable by $(U(k), d_{HS})_{k\in \mathbb N}$, unitary groups of finite rank endowed with the normalized Hilbert-Schmidt distance $d_{HS}$.
They have been appeared in relation to Connes' embedding conjecture in operator algebras and  have been named `hyperlinear' by R\u adulescu.

The \emph{existence} of a metric approximation of a group $G$ can be formulated through the existence of almost solutions of equations (without coefficients),
defined in the approximating groups by the relator words of $G$. On the other hand, the \emph{stability} of a metric approximation requires that
such an `almost' solution is always close, in the given metric, to an exact solution. Combined, the existence and the stability of a metric approximation of $G$
imply a \emph{robustness} of $G$: the group is forced to be fully residually-$\{$ the class of the approximating groups $\}$.
This opens a possibility to establish the non-existence of certain metric approximations by providing an example of a stable not fully residually-$\{$~the class of the approximating groups~$\}$ group, see our previous work~\cite{ArPau}, initiating the study of stability of metric approximations of arbitrary groups.

In this paper, we introduce \emph{constraint} metric approximations and we analyze questions on their existence and stability. 
A constraint metric approximation is defined
by means of equations with coefficients and such that prescribed constraints are imposed on the almost solutions.
Besides being natural, this generalization  is strongly motivated by several open questions on metric approximations (without constraints)
of famous groups such as Higman group, Burger-Mozes groups, etc.  as well as of wide classes of groups such as one-relator groups. 
Namely, these groups are obtained
via fundamental group-theoretical constructions:
free amalgamated products and HNN-extensions of approximable groups, and our constraint metric approximations provide
a rigorous framework to analyze metric approximability of such constructions.

The paper is organized as follows. 

In Section~\ref{sec:eq},  we formalise the new concepts of constraint metric approximations and
constraint stability in the most general setting: the approximating family, its cardinality, and the associated bi-invariant metrics are arbitrary;
the approximated group is arbitrary as well (e.g. it is not required to be finitely presented or finitely generated).
We prove both ultraproduct and algebraic characterizations of existence and stability of such metric approximations.
These encompass and generalize all such prior characterizations. 

In Section~\ref{sec:sofic}, we focus on constraint sofic approximations.
We give  examples of sofic groups admitting no constraint sofic approximations, with prescribed constraints.
In building such examples, we produce a sofic representation of a non-abelian free group that acts ergodically on the Loeb measure space and such that its commutant is trivial. This solves in the negative an open question from~\cite{Pa1}. 

Section~\ref{sec:centra} is devoted to the constraint stability in permutations of the commutator equation or, in other words,
to the `stability of centralizer' in permutations. Such a stability was first considered in \cite{GSM} (see also~\cite{M}), where a quantitative answer was obtained
for permutations with a specific cycle structure. In Theorem~\ref{thm:centra}, answering a question of Gorenstein-Sandler-Mills~(1962),
we give a qualitative characterization of stable centralizers in permutations in full generality.

\section{Metric approximations and equations in groups}\label{sec:eq}

\subsection{Equations in groups}

Let $\mathbb F_{m}$ denote the free group of rank $m$  freely generated by $\bar x_1, \ldots, \bar x_m$, and $\mathbb F_\ell$ denote the free group of rank $\ell$ freely generated by $\bar a_1,\ldots, \bar a_\ell$. We endow $\mathbb F_\ell\ast \mathbb F_m$ with the word length metric induced by the generating set $\bar a_1,\ldots, \bar a_\ell, \bar x_1,\ldots, \bar x_m.$

A word $w$ representing an element in the free product $\mathbb F_\ell\ast \mathbb F_m$ 
defines an \emph{equation} in $m$ \emph{variables $\bar x_1, \ldots, \bar x_m$} with $\ell $ \emph{coefficients $\bar a_1, \ldots, \bar a_\ell$} for some $\ell\geqslant 0$:
\begin{equation}\label{eq:main}
w(\bar a_1,\ldots, \bar a_\ell, \bar x_1,\ldots, \bar x_m)=1.
\end{equation}

Let $W\subseteq \mathbb F_\ell\ast \mathbb F_m$ be a finite subset, $\ell\geqslant 0, m\geqslant 1$. 
Let $H$ be a group equipped with a bi-invariant distance $d_H$, and  $\ve_H>0$ be a fixed real number.

\begin{de}[Solution and almost solution]\label{def:sol}
Elements $h_1, \ldots, h_m\in H$ are a \emph{solution} of $W$ in $H$ with coefficients $a_1, \ldots, a_\ell\in H$ if
\begin{equation}\label{eq:sys}
w(a_1, \ldots, a_\ell, h_1, \ldots, h_m)=1_H, \quad \forall w\in W,
\end{equation}
 where $1_H$ denotes the identity of $H$.

Elements $h_1, \ldots, h_m\in H$ are a \emph{strong solution} of $W$ in $H$ with coefficients $a_1, \ldots, a_\ell\in H$ if
\begin{align}
w(a_1, \ldots, a_\ell, h_1, \ldots, h_m)= 1_H, \quad  & \forall w\in W;\label{eq:strongs}\\
d_H(w(a_1, \ldots, a_\ell, h_1, \ldots, h_m),1_H)\geqslant \ve_H, \quad  & \forall w\not\in \langle W\rangle,\label{eq:stronginj}
\end{align}
where $\langle W\rangle$ denotes the normal subgroup of $\mathbb F_\ell\ast \mathbb F_m$ generated by $W.$

Elements $h_1, \ldots, h_m\in H$ are a \emph{$\delta$-solution} of $W$ in $H$ with coefficients $a_1, \ldots, a_\ell\in H$, for some $\delta>0$, if
\begin{equation}\label{eq:almost}
d_H\left(w(a_1, \ldots, a_\ell, h_1, \ldots, h_m),1_{H}\right)<\delta,  \quad \forall w\in W.
\end{equation}

Elements $h_1, \ldots, h_m\in H$ are a \emph{strong $\delta$-solution} of $W$ in $H$ with coefficients $a_1, \ldots, a_\ell\in H$, for some $\delta>0$, if $\forall w\in \mathbb F_\ell\ast\mathbb F_m$ of length at most $1/\delta$ we have
\begin{align}
d_H\left(w(a_1, \ldots, a_\ell, h_1, \ldots, h_m),1_{H}\right)<\delta,\quad & \forall w\in W;\label{eq:salmost}\\
d_H\left(w(a_1, \ldots, a_\ell, h_1, \ldots, h_m),1_{H}\right)\geqslant \ve_H-\delta, \quad & \forall w\not\in \langle W\rangle.\label{eq:sinj}
\end{align}

We say that \emph{$W$ is solvable} (resp. \emph{strongly solvable}, \emph{$\delta$-solvable}, and \emph{strongly $\delta$-solvable} )  in $H$, with coefficients $a_1, \ldots, a_\ell\in H$, if there exists
a solution (resp. strong solution, $\delta$-solution, and strong $\delta$-solution) of $W$ in $H$, with coefficients $a_1, \ldots, a_\ell\in H$.
\end{de}

We collect now standard interpretations of solvability, \emph{in} the group,
of equations with coefficients. This is inspired by the theory of algebraic extensions of a field and goes back to B.H.~Neumann~\cite{N}.
We add arguments for completeness. 

Given $\ell\geqslant 0$ elements $a_1, \ldots, a_\ell\in H,$ there is a unique group homomorphism $\jmath=\jmath(a_1,\ldots, a_\ell)\colon \mathbb F_\ell\ast\mathbb F_m\to H\ast \mathbb F_m$ such that $\bar a_i\mapsto a_i, \bar x_i\mapsto \bar x_i.$ For a finite subset $W\subseteq \mathbb F_\ell\ast \mathbb F_m$, we denote by $V=\jmath (W)$ its image and
by $\langle V\rangle$ the normal subgroup generated by $V$. Thus, elements of $V$ are words from $W$ with letters $\bar a_i$ are
 substituted by letters $a_i$. 

\begin{te}\label{thm:sol}
The following are equivalent.
\begin{enumerate}
\item $W$ is solvable in $H$, with coefficients $a_1, \ldots, a_\ell\in H$.
\item The natural homomorphism $\iota\colon H\to H\ast \mathbb F_m/ \langle V\rangle, h\mapsto h \langle V\rangle$ is split-injective, that is, it has a left inverse.
\item 
\begin{enumerate}
\item There is an injective homomorphism $\iota\colon H\hookrightarrow H\ast \mathbb F_m / \langle V\rangle;$
\item There exists a normal subgroup $N\unlhd H\ast \mathbb F_m/ \langle V\rangle$ such that $HN=H\ast \mathbb F_m / \langle V\rangle$ and $H\cap N=\{1\};$ that is,
$H\ast \mathbb F_m/ \langle V\rangle=N\rtimes H.$
\end{enumerate}
\item There is a split short exact sequence
$$
1\to N\hookrightarrow H\ast \mathbb F_m/ \langle V\rangle \twoheadrightarrow H\to 1.
$$
\item $H$ is a retract of $H\ast \mathbb F_m/ \langle V\rangle$.
\end{enumerate}

\end{te}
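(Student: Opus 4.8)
The plan is to prove the cyclic chain of implications $(1)\Rightarrow(2)\Rightarrow(3)\Rightarrow(4)\Rightarrow(5)\Rightarrow(1)$, using the universal property of the free product and quotient at each step. First I would unwind the definitions: a solution $h_1,\ldots,h_m\in H$ of $W$ with coefficients $a_1,\ldots,a_\ell\in H$ means exactly that $w(a_1,\ldots,a_\ell,h_1,\ldots,h_m)=1_H$ for all $w\in W$; equivalently, the homomorphism $H\ast\mathbb F_m\to H$ extending $\mathrm{id}_H$ and sending $\bar x_i\mapsto h_i$ kills every element of $V=\jmath(W)$, hence also $\langle V\rangle$, so it factors through a homomorphism $\rho\colon H\ast\mathbb F_m/\langle V\rangle\to H$. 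Since $\rho$ restricted to (the image of) $H$ is the identity, $\rho$ is a left inverse of the natural map $\iota\colon H\to H\ast\mathbb F_m/\langle V\rangle$, giving $(1)\Rightarrow(2)$. Conversely, given a left inverse $\rho$ of $\iota$, the elements $h_i:=\rho(\bar x_i\langle V\rangle)$ form a solution, since applying $\rho$ to the relation $w(a_1,\ldots,a_\ell,\bar x_1,\ldots,\bar x_m)\langle V\rangle=1$ in the quotient yields $w(a_1,\ldots,a_\ell,h_1,\ldots,h_m)=1_H$; this closes the loop $(5)\Rightarrow(1)$ once we know (5) gives such a retraction $\rho$.

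Next I would treat the purely group-theoretic equivalences $(2)\Leftrightarrow(3)\Leftrightarrow(4)\Leftrightarrow(5)$, which are standard facts about split injections and semidirect products and do not involve the equation at all. Writing $Q=H\ast\mathbb F_m/\langle V\rangle$ and thinking of $\iota\colon H\to Q$: (2) says $\iota$ is injective with a left inverse $\rho\colon Q\to H$. Then $N:=\ker\rho$ is normal in $Q$, satisfies $N\cap\iota(H)=\{1\}$ (if $\iota(h)\in N$ then $h=\rho\iota(h)=1$), and $N\cdot\iota(H)=Q$ (for $q\in Q$, write $q=(q\,\iota(\rho(q))^{-1})\cdot\iota(\rho(q))$ and note the first factor lies in $N$), so $Q=N\rtimes\iota(H)\cong N\rtimes H$, giving (3). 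Part (3)$\Rightarrow$(4) is immediate: $1\to N\to Q\to Q/N\to1$ with $Q/N\cong H$ via the isomorphism induced by $q\mapsto$ the $H$-component, and the splitting is $\iota$. Part (4)$\Rightarrow$(5) says a split surjection $Q\twoheadrightarrow H$ exhibits $H$ as a retract of $Q$ (the section is injective and post-composing the surjection is the identity on it). Finally (5)$\Rightarrow$(2): a retraction means there are maps $s\colon H\to Q$, $r\colon Q\to H$ with $rs=\mathrm{id}_H$; but one must check that the relevant $s$ is the canonical $\iota$, which holds because any section of the quotient-then-project map restricted to $H$ agrees with $\iota$ up to composing with $r$, and in any case $\iota$ itself is injective as soon as any left inverse exists. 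I would state this last point carefully to make the cycle genuinely close on the canonical $\iota$ rather than on an abstract retraction.

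The one subtle point — and the main thing to get right — is the interplay between "$\iota$ is split-injective" and the a priori weaker-looking statements in (3)(a) ("there is \emph{some} injective homomorphism $H\hookrightarrow Q$") and (5) ("$H$ is \emph{a} retract of $Q$"): these are phrased without reference to the canonical map, so one must argue that in this particular situation the canonical $\iota\colon H\to Q$ is automatically the relevant one. The key observation is that $\iota$ always has the following property: if $Q$ admits \emph{any} retraction onto a subgroup isomorphic to $H$, or if any of (3)(b), (4) hold, then composing the given data produces a left inverse of $\iota$ specifically — for instance from (4), the section $\sigma\colon H\to Q$ and the fact that $H\ast\mathbb F_m\to Q\to H$ (the latter being the quotient map composed with $Q\twoheadrightarrow H$) restricts to $\mathrm{id}_H$ on the $H$-factor forces the composite $Q\twoheadrightarrow H$ to be a left inverse of $\iota$. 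So the circle of implications is best organized so that each arrow produces data compatible with the canonical $\iota$; I would insert one short lemma-free paragraph noting that the natural map $H\ast\mathbb F_m\twoheadrightarrow H$ (identity on $H$, trivial on $\mathbb F_m$) descends to $Q$ iff $W$ is solvable, which is really the heart of the whole theorem and makes every equivalence transparent. Beyond this bookkeeping, all steps are routine diagram-chasing with universal properties.
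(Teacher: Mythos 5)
Your proposal is correct and takes essentially the same route as the paper: the heart of both arguments is that a solution $h_1,\ldots,h_m$ is precisely the data of an evaluation homomorphism $H\ast\mathbb F_m\to H$ (identity on $H$, $\bar x_i\mapsto h_i$) that kills $\langle V\rangle$, hence descends to a left inverse of $\iota$; from there $N=\ker$ of that left inverse gives the semidirect decomposition, and the remaining equivalences are standard bookkeeping. The paper proves $(1)\Leftrightarrow(2)$ and $(2)\Leftrightarrow(3)$ explicitly and declares $(3)\Leftrightarrow(4)\Leftrightarrow(5)$ well known, whereas you arrange a cycle $(1)\Rightarrow(2)\Rightarrow\cdots\Rightarrow(5)\Rightarrow(1)$; that is a cosmetic difference.

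One remark on your ``subtle point.'' You are right to notice that, read in isolation, ``there is an injective homomorphism $H\hookrightarrow Q$'' and ``$H$ is a retract of $Q$'' look weaker than ``the canonical $\iota$ is split.'' But the paper's conventions already fix this: (3)(a) reuses the symbol $\iota$, (3)(b) and (4) identify $H$ with $\iota(H)\leqslant Q$ and take $N$ to be the complementary normal subgroup, and (5)'s retraction is with respect to that identification. With that reading the equivalences are trivial, and the paper is entitled to wave at them. Your attempted patch for an honestly abstract reading of (5) does not quite go through as written: from an arbitrary surjection $Q\twoheadrightarrow H$ with section $\sigma$ there is no reason the composite $H\ast\mathbb F_m\to Q\twoheadrightarrow H$ should restrict to $\mathrm{id}_H$ on the $H$-factor, so you cannot conclude that the retraction is a left inverse of $\iota$ without already assuming the identification. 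In fact the abstract reading of (5) is genuinely strictly weaker (e.g.\ if $W\ni\bar a_1$ with $a_1\neq 1_H$, then $\iota$ is not injective, but $Q$ might still happen to retract onto some copy of $H$), so the canonical-$\iota$ reading is not optional here. None of this affects the substance of your argument; it just means the cautionary paragraph should simply declare the convention rather than try to derive it.
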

\begin{proof}
$(1)\Leftrightarrow (2).$ If $W$ is solvable in $H$, with coefficients $a_1, \ldots, a_\ell\in H$ and $h_1,\ldots, h_m\in H$ are a fixed solution, then
we define a map $$\eta\colon H\ast \mathbb F_m/ \langle V\rangle\twoheadrightarrow H,$$ 
by assigning $\eta(\bar x_1 \langle V\rangle)=h_1, \ldots, \eta(\bar x_m \langle V\rangle)=h_m, \eta(h \langle V\rangle)=h$ for all $h\in H$, and extending
this multiplicatively to all $u \langle V\rangle\in H\ast \mathbb F_m/ \langle V\rangle,$ where $u$ is an arbitrary word representing an element of $H\ast \mathbb F_m$. This map is well-defined: if $u'\in u \langle V\rangle$, then $H\ast \mathbb F_m\ni u^{-1}u'$ is a product of conjugates of $v^{\pm 1}$ for some finitely many $v\in V$, and hence, evaluates to $1_H$ under $\eta$ since $h_1,\ldots,h_m$ are a solution.
It is obvious that $\eta$ is a homomorphism which is a left inverse of $\iota$.

Conversely, if $\eta\colon H\ast \mathbb F_m/ \langle V\rangle \to H$ is a left inverse of $\iota$, then we set $h_1=\eta( \bar x_1 \langle V\rangle), \ldots, 
h_m=\eta(\bar x_m \langle V\rangle)$. Using that $\eta(\iota(h))=h$ for all $h\in H$ and $\eta$ is a homomorphism, it is immediate that these $h_1, \ldots, h_m$ are 
a solution of $W$.

$(2)\Leftrightarrow (3).$ A split-injective map is always injective. So, if $\iota$ is split-injective, we set $N=\Ker\eta$, the kernel of a left inverse $\eta$  of $\iota$. 
Then, denoting $x=u\langle V\rangle$ and using that $\eta(\iota(h))=h$ for all $h\in H$, we have  
$$\eta(x\cdot \iota(\eta(x^{-1})))=\eta(x)\eta(\iota(\eta(x^{-1})))=\eta(x)\eta(x^{-1})=1_{H\ast \mathbb F_m/ \langle V\rangle}.$$
Hence, $x\cdot \iota(\eta(x^{-1}))\in N$ and $x=\iota(\eta(x))\cdot n$ for some $n\in N,$ that is, $HN=H\ast \mathbb F_m / \langle V\rangle.$
If $\iota(\eta(x))$ belongs to $N$, then $\eta(\iota(\eta(x)))= \eta(x)=1_{H}$ and hence $\iota(\eta(x))=1_{H\ast \mathbb F_m/ \langle V\rangle},$ whence
$H\cap N=\{1\}.$ 

Conversely, a left inverse of $\iota$ is given by the natural map $HN\ni (h,n)\mapsto h\in H.$ 

$(3)\Leftrightarrow (4) \Leftrightarrow (5)$ are well-known and can be checked by definitions of the involved concepts.
\end{proof}

Using basic terminology of algebraic geometry, the set of
all solutions of $W\subseteq \mathbb F_\ell\ast \mathbb F_m$ in $H$ with coefficients $a_1, \ldots, a_\ell\in H$ form an \emph{algebraic set} in $H^m$ defined by $W$,
this algebraic set is uniquely (cf. Corollary~\ref{cor:equiv}) defined by its \emph{radical} $\langle V\rangle$, and the quotient 
$H\ast \mathbb F_m/ \langle V \rangle$ is the \emph{coordinate group} of system $W$.

In  a categorical language:   $H\ast \mathbb F_m$ (often denoted by $H[\bar x_1, \ldots, \bar x_m]$ and viewed as
an analogue of a polynomial algebra with $H$ playing a role of the coefficients)
is the free object in the category of \emph{$H$-groups}, i.e. groups containing a designated copy of $H$ viewed up to \emph{$H$-morphisms},
group homomorphisms trivial on those prescribed copies of $H$.  Theorem~\ref{thm:sol}
implies that each quotient $H\ast \mathbb F_m/ \langle V \rangle$ is an object in this category and every solution of $W$
can be described as an $H$-morphism $\eta\colon H\ast \mathbb F_m/ \langle V \rangle \twoheadrightarrow H.$

\subsection{Constraint approximation and constraint stability of systems}

Let $\mathcal{F}=\left(
G_{\alpha }, d_{\alpha}, \varepsilon _{\alpha }\right) _{\alpha \in I}$ be an \emph{approximating family}:
$G_{\alpha }$ is a  group with a bi-invariant distance $d_{\alpha
} $ and identity element $1_{\alpha }$,  and $%
\varepsilon _{\alpha }$ is a strictly positive real number such that $%
\epsilon:=\inf_{\alpha  \in I}\varepsilon _{\alpha }>0$. For each $\alpha\in I,$ we fix elements $a_1^\alpha, \ldots, a_\ell^\alpha\in G_\alpha$.

The following definition encompasses both well-known metric approximations such as sofic, hyperlinear, etc. approximations and
metric approximations by wider families (notice the use of the index set $I$ instead of usual $\mathbb N$).

\begin{de}[Constraint $\mathcal F$-approximation]\label{def:capp}
A finite system $W\subseteq\mathbb F_\ell\ast\mathbb F_m$ is \emph{constraint $\mathcal F$-approximable} with respect to $a_1^\alpha, \ldots, a_\ell^\alpha\in G_\alpha, \alpha\in I,$ if $\forall \delta>0\, \exists \alpha\in I$ such that $W$ is strongly $\delta$-solvable in $G_\alpha$ with coefficients $a_1^\alpha,\ldots,a_\ell^\alpha\in G_\alpha$. 

A finite system $W\subseteq \mathbb F_\ell\ast \mathbb F_m$ is \emph{$\mathcal F$-approximable}  if it is constraint $\mathcal F$-approximable with respect to the 
identity coefficients 
$a_1^\alpha=1_\alpha, \ldots, a_\ell^\alpha=1_\alpha\in G_\alpha, \alpha\in I.$

\end{de}

Let us now formalize a possibility when an almost solution is close, in the given metric, to a solution, while
the coefficients are prescribed.

\begin{de}[Constraint $\mathcal F$-stability]\label{def:cstab}

A finite system $W\subseteq \mathbb F_\ell\ast \mathbb F_m$ is \emph{constraint $\mathcal F$-stable}  with respect to  $a_1^\alpha, \ldots, a_\ell^\alpha\in G_\alpha, \alpha\in I,$ if $\forall \varepsilon>0\, \exists \delta>0$ such that $\forall \alpha\in I\, \forall g_1, \ldots, g_m\in G_\alpha$ a $\delta$-solution
of $W$ with coefficients $a_1^\alpha, \ldots, a_\ell^\alpha$, there exist $\tilde g_1, \ldots, \tilde g_m \in G_\alpha$ a solution of $W$ with coefficients $a_1^\alpha, \ldots, a_\ell^\alpha$ such that $d_\alpha(g_i, \tilde g_i)<\varepsilon.$

A finite system $W\subseteq \mathbb F_\ell\ast \mathbb F_m$ is \emph{$\mathcal F$-stable}  if it is constraint $\mathcal F$-stable with respect to the 
identity coefficients 
$a_1^\alpha=1_\alpha, \ldots, a_\ell^\alpha=1_\alpha\in G_\alpha, \alpha\in I.$

\end{de}

The next notion is to take also into account the non-solutions of equations.

\begin{de}[Constraint weak $\mathcal F$-stability]\label{def:cwstab}

A finite system $W\subseteq \mathbb F_\ell\ast\mathbb F_m$ is  \emph{constraint weakly $\mathcal F$-stable}  with respect to $a_1^\alpha, \ldots, a_\ell^\alpha\in G_\alpha, \alpha\in I,$
 if $\forall \varepsilon>0\, \exists \delta>0$ such that $\forall \alpha\in I\, \forall g_1, \ldots, g_m\in G_\alpha$ a strong $\delta$-solution of $W$ with coefficients $a_1^\alpha, \ldots, a_\ell^\alpha$, there exist $\tilde g_1, \ldots, \tilde g_m \in G_\alpha$ a solution of $W$ with coefficients $a_1^\alpha, \ldots, a_\ell^\alpha$ such that $d_\alpha(g_i, \tilde g_i)<\varepsilon.$

A finite system $W\subseteq \mathbb F_\ell\ast \mathbb F_m$ is  \emph{weakly $\mathcal F$-stable}  if it is constraint weakly $\mathcal F$-stable with respect to the 
identity coefficients  $a_1^\alpha=1_\alpha, \ldots, a_\ell^\alpha=1_\alpha\in G_\alpha, \alpha\in I.$

\end{de}

\subsection{Constraint approximation of groups}
Given a finite system $W\subseteq \mathbb F_\ell\ast \mathbb F_m$, let $G=\mathbb F_\ell\ast\mathbb F_m/ \langle W\rangle$ be the quotient of $\mathbb F_\ell\ast\mathbb F_m$ by the normal closure of $W$, and 
$$\rho\colon \mathbb F_\ell\ast\mathbb F_m\twoheadrightarrow  \mathbb F_\ell\ast\mathbb F_m/ \langle W\rangle$$
be the canonical projection.
We denote by $a_i=\rho(\bar a_i)$, for the generators of $\mathbb F_\ell$, and  by $x_i=\rho(\bar x_i)$, for the generators of $\mathbb F_m$.
That is, $G$ is given by the presentation
\begin{equation}\label{eq:G}
G=\langle a_1,\ldots , a_\ell,  x_1, \ldots,  x_m \mid w(a_1, \ldots, a_\ell, x_1, \ldots, x_m)=1, \forall w\in W\rangle
\end{equation}

We would like now to determine the constraint $\mathcal F$-approximability of a system $W$
through properties of the group $G$ and vise versa. For this we use metric ultraproducts of groups $G_\alpha, \, \alpha\in I$. Observe that the index set $I$ is arbitrary and there is no control over $G_\alpha\in\mathcal F$ that are used in Definition~\ref{def:capp}. 
For different values of $n\in\nz^*$ (taking $n=1/\delta$ in Definition~\ref{def:capp}), $\alpha\in I$ can vary greatly or could be the same, in which case we 
are lead to construct the ultrapower of $G_\alpha$. This is why we use a function $f\colon\nz\to I$, together with a non-principal ultrafilter $\omega$ over $%
\nz $. 

The \emph{metric ultraproduct} $\prod_{k\to\omega}\left( G_{f(k)
},d_{f(k) }\right) $ of the family of bi-invariant metric groups $\left(
G_{f(k) },d_{f(k)}\right) _{k \in \nz}$  is the quotient of the direct
product $\prod_{k \in \nz}G_{f(k)}$ by the normal subgroup consisting of
those elements $\left( g_{k }\right)_{k\in \nz} $ such that $\lim_{k
\rightarrow \omega}d_{f(k) }\left( g_{k },1_{f(k) }\right) =0$%
. The metric ultraproduct $\prod_{k\to\omega}\left( G_{f(k) },d_{f(k)}\right) 
$ is endowed with a canonical bi-invariant metric $d_{\omega}$,
obtained as the quotient of the bi-invariant pseudometric on $\prod_{k
\in \nz}G_{f(k) }$, and defined by $d_{\omega}\left( \left( g_{k
}\right)_{k\in \nz}  ,\left( h_{k }\right)_{k\in \nz}  \right) =\lim_{k \rightarrow 
\omega}d_{f(k) }\left( g_{k },h_{k }\right) $.
We write $1_\omega$ for the identity element of this group.

\begin{te}\label{thm:ultra}
A finite system $W\subseteq\mathbb F_\ell\ast\mathbb F_m$ is constraint $\mathcal F$-approximable with respect to $a_1^\alpha, \ldots, a_\ell^\alpha\in G_\alpha, \alpha\in I,$ if and only if there exist a non-principal ultrafilter $\omega$ over $\nz$, a function $f\colon\nz\to I,$ and a group homomorphism $$\theta\colon G\rightarrow \prod_{k\to\omega}\left( G_{f(k)},d_{f(k)}\right), \hbox{such that}$$  
\begin{enumerate}
\item[(i)] $d_{\omega%
}\left( \theta(g) ,\theta(h) \right) \geqslant \epsilon=\lim_{k\to\omega}\varepsilon _{f(k)} $
for every nontrivial distinct $g,h\in G$;
\item[(ii)] $\theta ( a_i)=\prod_{%
k\to\omega}a_i^{f(k)}$, for each $i=1,\ldots,\ell.$
\end{enumerate}
\end{te}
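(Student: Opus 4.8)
The plan is to prove the two implications of Theorem~\ref{thm:ultra} by unwinding the definitions of constraint $\mathcal F$-approximability and of the metric ultraproduct, using the algebraic characterization of solvability from Theorem~\ref{thm:sol} only implicitly via the presentation~\eqref{eq:G} of $G$.

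\textbf{From approximability to a homomorphism.} Suppose $W$ is constraint $\mathcal F$-approximable with respect to $a_1^\alpha,\ldots,a_\ell^\alpha$. For each $k\in\nz$, apply Definition~\ref{def:capp} with $\delta=1/k$ to obtain an index $\alpha=f(k)\in I$ and a strong $(1/k)$-solution $g_1^{f(k)},\ldots,g_m^{f(k)}\in G_{f(k)}$ of $W$ with coefficients $a_1^{f(k)},\ldots,a_\ell^{f(k)}$. Fix any non-principal ultrafilter $\omega$ over $\nz$ and set $x_i\mapsto \prod_{k\to\omega}g_i^{f(k)}$ and $a_i\mapsto\prod_{k\to\omega}a_i^{f(k)}$; since $\mathbb F_\ell\ast\mathbb F_m$ is free, this defines a homomorphism $\tilde\theta\colon\mathbb F_\ell\ast\mathbb F_m\to\prod_{k\to\omega}(G_{f(k)},d_{f(k)})$. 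For $w\in W$, condition~\eqref{eq:salmost} gives $d_{f(k)}(w(\cdots),1_{f(k)})<1/k$ for $k$ large (so that $|w|\leqslant k$), hence $d_\omega(\tilde\theta(w),1_\omega)=\lim_{k\to\omega}d_{f(k)}(w(\cdots),1_{f(k)})=0$, i.e.\ $\tilde\theta(w)=1_\omega$. Thus $\tilde\theta$ kills $\langle W\rangle$ and descends to $\theta\colon G\to\prod_{k\to\omega}(G_{f(k)},d_{f(k)})$, which satisfies (ii) by construction. For (i), take nontrivial distinct $g,h\in G$ and a word $u\in\mathbb F_\ell\ast\mathbb F_m$ with $u\notin\langle W\rangle$ representing $g^{-1}h$; for $k$ large ($|u|\leqslant k$), condition~\eqref{eq:sinj} gives $d_{f(k)}(u(\cdots),1_{f(k)})\geqslant\ve_{f(k)}-1/k$, so by bi-invariance $d_\omega(\theta(g),\theta(h))=d_\omega(\theta(g^{-1}h),1_\omega)=\lim_{k\to\omega}d_{f(k)}(u(\cdots),1_{f(k)})\geqslant\lim_{k\to\omega}(\ve_{f(k)}-1/k)=\lim_{k\to\omega}\ve_{f(k)}\geqslant\epsilon>0$, which also shows $\theta$ is injective (take $h=1$). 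This last limit exists along $\omega$ because the $\ve_\alpha$ are bounded below by $\epsilon$ and we may assume bounded above by passing to a subnet or simply noting $\lim_{k\to\omega}\ve_{f(k)}$ is a well-defined element of $[\epsilon,\infty]$.

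\textbf{From a homomorphism to approximability.} Conversely, assume such $\omega$, $f$, and $\theta$ exist. Lift each $\theta(x_i)$ to a sequence $(g_i^k)_{k\in\nz}$ with $g_i^k\in G_{f(k)}$, and similarly lift $\theta(a_i)=\prod_{k\to\omega}a_i^{f(k)}$ to the sequence $(a_i^{f(k)})_{k\in\nz}$ (by (ii)). Fix $\delta>0$. For $w\in W$, since $w=1$ in $G$ we have $\theta(w)=1_\omega$, so $\lim_{k\to\omega}d_{f(k)}(w(a^{f(k)},g^k),1_{f(k)})=0$; for each $w\notin\langle W\rangle$, conditions (i) and bi-invariance give $\lim_{k\to\omega}d_{f(k)}(w(a^{f(k)},g^k),1_{f(k)})\geqslant\epsilon$. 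Since $W$ is finite and $\{w\notin\langle W\rangle:|w|\leqslant 1/\delta\}$ is finite, the set $A_\delta\subseteq\nz$ of those $k$ for which simultaneously $d_{f(k)}(w(\cdots),1_{f(k)})<\delta$ for all $w\in W$ and $d_{f(k)}(w(\cdots),1_{f(k)})\geqslant\epsilon-\delta\geqslant\ve_{f(k)}-\delta'$ (here one uses $\epsilon=\inf_\alpha\ve_\alpha$, so $\epsilon-\delta\geqslant\ve_{f(k)}-\delta$ fails in the wrong direction — instead use $\ve_{f(k)}\geqslant\epsilon$ to get $\ve_{f(k)}-\delta\leqslant$ the bound is automatic once the limit is $\geqslant\epsilon$) for all $w\notin\langle W\rangle$ with $|w|\leqslant 1/\delta$, is $\omega$-large, hence nonempty. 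Picking any $k\in A_\delta$ and setting $\alpha=f(k)$ with the almost-solution $g_1^k,\ldots,g_m^k$ verifies strong $\delta$-solvability in $G_\alpha$ with coefficients $a_1^\alpha,\ldots,a_\ell^\alpha$. As $\delta>0$ was arbitrary, $W$ is constraint $\mathcal F$-approximable.

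\textbf{Main obstacle.} The routine part is the bookkeeping with word lengths and finitely many words; the only delicate point is the interplay between $\epsilon=\inf_\alpha\ve_\alpha$, the value $\lim_{k\to\omega}\ve_{f(k)}$ appearing in (i), and the threshold $\ve_{f(k)}-\delta$ in~\eqref{eq:sinj}. One must check that whenever $\lim_{k\to\omega}d_{f(k)}(w(\cdots),1_{f(k)})\geqslant\epsilon$ and $\ve_{f(k)}\geqslant\epsilon$, an $\omega$-large set of indices satisfies the genuine strong-solution inequality $d_{f(k)}(w(\cdots),1_{f(k)})\geqslant\ve_{f(k)}-\delta$: this is false in general, so the correct statement of (i) must use $\epsilon=\lim_{k\to\omega}\ve_{f(k)}$ rather than $\inf_\alpha\ve_\alpha$ when the $\ve_\alpha$ are not eventually constant along $f$, and one argues instead that for $\omega$-almost all $k$, $\ve_{f(k)}<\epsilon+\delta$, whence $d_{f(k)}(w(\cdots),1_{f(k)})\geqslant\epsilon-\delta>\ve_{f(k)}-2\delta$; rescaling $\delta$ then closes the argument. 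I would handle this by first reducing, via passing to an $\omega$-large subset of $\nz$, to the case where $\ve_{f(k)}\to\epsilon$ along $\omega$, after which all the inequalities match up cleanly.
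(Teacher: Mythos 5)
Your proof is correct and follows essentially the same strategy as the paper's: in the forward direction, fix a sequence $\delta_k\to 0$, extract strong $\delta_k$-solutions via Definition~\ref{def:capp}, send generators of $\mathbb F_\ell\ast\mathbb F_m$ to the corresponding ultraproduct elements, and use \eqref{eq:salmost} to kill $\langle W\rangle$ and \eqref{eq:sinj} to get the separation condition; in the reverse direction, lift $\theta$ coordinatewise, observe that strong $\delta$-solvability is a finite conjunction of open conditions on the $G_{f(k)}$-coordinates that hold in the ultralimit, and conclude they hold for some index $k$. The ``main obstacle'' you flag is a genuine subtlety that the paper's very terse reverse direction glosses over: the theorem's condition (i) deliberately takes $\epsilon=\lim_{k\to\omega}\ve_{f(k)}$ (not $\inf_\alpha\ve_\alpha$), and your observation that one needs $\ve_{f(k)}<\epsilon+\delta$ on an $\omega$-large set to recover the threshold $\ve_{f(k)}-\delta$ in \eqref{eq:sinj}, followed by a rescaling of $\delta$, is exactly the right way to close that gap and is a useful explicit record of a step the paper leaves implicit.
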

\begin{proof}
Take a sequence $(\delta_k)_{k\in\mathbb N}$ with $\delta_k\in\mathbb{R}_+^*$ and such that $\delta_k\to0$ as $k\to\infty$, and construct $\delta_k$-strong solutions 
$g_1^{f(k)}, \ldots, g_m^{f(k)}\in G_{f(k)}$ of $W$, with respect to $a_1^{f(k)}, \ldots, a_\ell^{f(k)}\in G_{f(k)}$ in the sens of Definition~\ref{def:sol}. Here, each $f(k)=\alpha(\delta_k)$ is given by Definition~\ref{def:capp}.
 Then define $\theta(a_i)=\prod_{%
k\to\omega}a_i^{f(k)}$, for each $i=1,\ldots,\ell$ and $\theta(x_i)=\prod_{%
k\to\omega}g_i^{f(k)}$, for each $i=1,\ldots, m.$  It follows from (\ref{eq:salmost}) that $\theta$ is a homomorphism and
from (\ref{eq:sinj}) that $\theta$ satisfies condition (i). 

For the reverse implication, fix $\delta>0$. Let $\theta_k(x_i)\in G_{f(k)}$ be such that $\theta(x_i)=\Pi_{k\to\omega}\theta_k(x_i)$. There are finitely many inequalities that elements $\theta_k(x_1),\ldots, \theta_k(x_m)$ in $G_{f(k)}$ must obey for them to be a $\delta$-strong solution of $W$. Since all of these inequalities are satisfied in the ultralimit, it follows that there exists $k\in\nz$ such that $\theta_k(x_1),\ldots,\theta_k(x_m)$ are indeed a $\delta$-strong solution of $W$.
\end{proof}

\begin{de}[Equivalent systems]
Two systems $W_1\subseteq\mathbb F_\ell\ast\mathbb F_{m_1}$ and $W_2\subseteq\mathbb F_\ell\ast\mathbb F_{m_2}$ are called \emph{equivalent} if there exists a group isomorphism $\phi\colon \mathbb F_\ell\ast\mathbb F_{m_1}/\langle W_1\rangle\to \mathbb F_\ell\ast\mathbb F_{m_2}/\langle W_2\rangle$ such that $\phi(\bar a_i\langle W_1\rangle)=\bar a_i\langle W_2\rangle$ for each $i=1,\ldots,\ell$, where $\bar a_1,\ldots, \bar a_\ell$ are the generators of $\mathbb F_\ell$.
\end{de}

\begin{cor}\label{cor:equiv ap}
Let $W_1$ and $W_2$ be two equivalent systems. If $W_1$ is constraint $\mathcal F$-approximable then so is $W_2$.
\end{cor}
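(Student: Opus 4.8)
The plan is to push everything through the ultraproduct characterization of Theorem~\ref{thm:ultra}, which is precisely the tool that makes transfer statements of this kind immediate. A direct argument with strong $\delta$-solutions would be clumsy: $W_1$ and $W_2$ may have different numbers of variables, and the length cutoff $1/\delta$ in the definition of a strong $\delta$-solution does not pass cleanly through the abstract isomorphism $\phi$, whereas at the level of ultraproducts this cutoff has already been absorbed.

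Write $G_j := \mathbb F_\ell\ast\mathbb F_{m_j}/\langle W_j\rangle$ for $j=1,2$, and let $a_i^{(j)}\in G_j$ be the image of $\bar a_i$. Let $\phi\colon G_1\to G_2$ be the isomorphism witnessing the equivalence, so that $\phi(a_i^{(1)})=a_i^{(2)}$, and hence $\phi^{-1}(a_i^{(2)})=a_i^{(1)}$, for $i=1,\ldots,\ell$. First I would apply Theorem~\ref{thm:ultra} to the hypothesis on $W_1$: this produces a non-principal ultrafilter $\omega$ over $\nz$, a function $f\colon\nz\to I$, and a homomorphism $\theta_1\colon G_1\to\prod_{k\to\omega}(G_{f(k)},d_{f(k)})$ satisfying conditions (i) and (ii) of that theorem, i.e. $d_\omega(\theta_1(g),\theta_1(h))\geqslant\epsilon=\lim_{k\to\omega}\varepsilon_{f(k)}$ for all nontrivial distinct $g,h\in G_1$, and $\theta_1(a_i^{(1)})=\prod_{k\to\omega}a_i^{f(k)}$ for each $i$.

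Next I would set $\theta_2:=\theta_1\circ\phi^{-1}\colon G_2\to\prod_{k\to\omega}(G_{f(k)},d_{f(k)})$, keeping the same $\omega$ and $f$. Being a composite of group homomorphisms, $\theta_2$ is a homomorphism. Since $\phi^{-1}$ is a bijection, it carries distinct (resp. nontrivial) elements of $G_2$ to distinct (resp. nontrivial) elements of $G_1$, so $\theta_2$ inherits condition (i) of Theorem~\ref{thm:ultra} with the same $\epsilon$; and $\theta_2(a_i^{(2)})=\theta_1(a_i^{(1)})=\prod_{k\to\omega}a_i^{f(k)}$, which is condition (ii). Applying the reverse implication of Theorem~\ref{thm:ultra} to $\theta_2$ then yields that $W_2$ is constraint $\mathcal F$-approximable with respect to the same coefficients $a_1^\alpha,\ldots,a_\ell^\alpha$, as required.

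There is essentially no obstacle here; the only points deserving a line of care are the bookkeeping of which copy $a_i^{(j)}$ of $\bar a_i$ sits in which quotient, and the observation that condition (i) of Theorem~\ref{thm:ultra} is invariant under precomposition with an isomorphism, since it refers only to the group structure and to the images of the coefficients, both of which $\phi$ respects. The same reasoning incidentally shows that equivalence of systems is compatible with constraint $\mathcal F$-approximability in general, so that approximability of $W$ is really a property of its coordinate group $G$ together with the designated coefficients.
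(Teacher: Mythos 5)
Your proposal is correct and follows precisely the approach the paper intends: the paper's proof reads only ``It is straightforward, by Theorem~\ref{thm:ultra}'', and you have spelled out exactly that straightforward argument, composing the homomorphism $\theta_1$ coming from Theorem~\ref{thm:ultra} with $\phi^{-1}$ and verifying conditions (i) and (ii). The bookkeeping of the coefficients and the observation that condition (i) is preserved under precomposition with an isomorphism are both handled correctly.
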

\begin{proof}
It is straightforward, by Theorem \ref{thm:ultra}.
\end{proof}

\begin{de}[Constraint $\mathcal F$-approximation of groups]\label{def:gapp}
A group $G= \mathbb F_\ell\ast\mathbb F_m/ \langle W\rangle$ is \emph{(constraint) $\mathcal F$-approximable} with respect to $a_1^\alpha, \ldots, a_\ell^\alpha\in G_\alpha, \alpha\in I,$
if $W\subseteq \mathbb F_\ell\ast\mathbb F_m$ is (constraint) $\mathcal F$-approximable with respect to $a_1^\alpha, \ldots, a_\ell^\alpha\in G_\alpha, \alpha\in I,$ in the sens of Definition~\ref{def:capp}.

\end{de}

By Corollary~\ref{cor:equiv ap}, the notion of constraint $\mathcal F$-approximability of a group is well-defined: 
it does not depend on the choice of the group presentation. 

\begin{rem}\label{rem:wlog}
Our definition of (constraint) $\mathcal F$-approximation of a group encompasses all finitely presented groups, considered with chosen finitely many group elements, \emph{coefficients}.
Indeed, let $G$ be given by an arbitrary  finite presentation $\langle x_1, \ldots, x_m \mid r=1,\, \forall r\in R\rangle$ and $a_1, \ldots,  a_\ell\in G$ be fixed $\ell\geqslant 0$ group elements.  The system $W\subseteq \mathbb F_\ell\ast\mathbb F_m$ associated to this data  is defined as follows: add elements $ a_1,\ldots,  a_\ell$ to the generators of $G$ and consider a new, still finite, presentation of $G\simeq\mathbb F_\ell\ast\mathbb F_m/\langle W\rangle$ for $W\subseteq \mathbb F_\ell\ast\mathbb F_m$, where $W=R\sqcup A$ and $A$ consists of words $ a_i=  a_i(  x_1, \ldots,  x_m)$ representing each of the fixed coefficients $ a_i\in G, i=1, \ldots, \ell$ in the initial generators $ x_1, \ldots,  x_m$ of $G$.

The notion extends to infinitely presented and countably generated groups.
If $G=\langle x_1, \ldots, x_m \mid r'=1,\, \forall r'\in R'\rangle$,  where the set of relators  $R'\subseteq \mathbb F_m$ is infinite,
we say that $G$ is \emph{(constraint) $\mathcal F$-approximable} if, for every finite subset $R\subseteq R'$, the finitely presented group 
$\langle x_1, \ldots, x_m \mid r=1,\, \forall r\in R\rangle$ is (constraint) $\mathcal F$-approximable.
A countable group $G$ is  said to be \emph{(constraint) $\mathcal F$-approximable}  if so is its every finitely generated subgroup.
\end{rem}

\begin{ex}[Centralizer equation]
Set $\ell=1, m=1,$ and $W=\{axa^{-1}x^{-1}\}\subseteq  \mathbb Z\ast\mathbb Z$. Here $a$ is the coefficient and  $x$ is the variable of the 
centralizer equation $axa^{-1}x^{-1}=1.$
Then, by definition,  $W$  is (constraint) $\mathcal F$-approximable with respect to $a^\alpha\in G_\alpha, \alpha\in I$ if
and only if the group $G=\langle a,x \mid axa^{-1}x^{-1}=1\rangle $ is (constraint) $\mathcal F$-approximable with respect to $a^\alpha\in G_\alpha, \alpha\in I.$
\end{ex}

Theorem~\ref{thm:ultra} allows us to interpret the existence of constraint $\mathcal F$-approximations of a group
via (exact) solutions of group relator equations, with coefficients, in the metric ultraproduct of groups from the approximating family.

\begin{cor}\label{cor:ext}
A group $G= \mathbb F_\ell\ast\mathbb F_m/ \langle W\rangle$ is constraint $\mathcal F$-approximable with respect to $a_1^\alpha, \ldots, a_\ell^\alpha\in G_\alpha, \alpha\in I,$
if and only if there exist a non-principal ultrafilter $\omega$ over $\nz$ and a function $f\colon\nz\to I$ such that the system $W$ is strongly solvable in $\mathbb G=\prod_{k\to\omega}\left( G_{f(k)},d_{f(k)}\right)$, with  $\ell$ coefficients $\prod_{%
k\to\omega}a_i^{f(k)}\in \mathbb G, i=1,\ldots,\ell$, and $\ve_\mathbb G=\lim_{k\to\omega}\ve_{f(k)}$. 
\end{cor}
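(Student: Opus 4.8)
\textbf{Proof plan for Corollary~\ref{cor:ext}.}
The plan is to derive this directly from Theorem~\ref{thm:ultra}, using the dictionary between a group homomorphism $\theta\colon G\to \mathbb G$ satisfying conditions (i)--(ii) and a strong solution of $W$ in $\mathbb G$ with the prescribed coefficients. First I would record that, by definition, $G$ is constraint $\mathcal F$-approximable with respect to the $a_i^\alpha$ if and only if the defining system $W\subseteq \mathbb F_\ell\ast\mathbb F_m$ is; so it suffices to translate Theorem~\ref{thm:ultra} into the language of Definition~\ref{def:sol}.

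For the forward direction, suppose we are given $\omega$, $f$, and $\theta$ as in Theorem~\ref{thm:ultra}. Put $\mathbb G=\prod_{k\to\omega}(G_{f(k)},d_{f(k)})$, set the coefficients $b_i:=\prod_{k\to\omega}a_i^{f(k)}\in\mathbb G$, and define candidate solution elements $h_i:=\theta(x_i)\in\mathbb G$ for $i=1,\dots,m$, where $x_i=\rho(\bar x_i)$. By condition (ii) of Theorem~\ref{thm:ultra}, $\theta(a_i)=b_i$, so for every $w\in\mathbb F_\ell\ast\mathbb F_m$ we have $w(b_1,\dots,b_\ell,h_1,\dots,h_m)=\theta(\rho(w))$, because $\theta\circ\rho$ is the homomorphism $\mathbb F_\ell\ast\mathbb F_m\to\mathbb G$ sending $\bar a_i\mapsto b_i$, $\bar x_i\mapsto h_i$. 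Now for $w\in W$ we have $\rho(w)=1$ in $G$, hence $w(b_\bullet,h_\bullet)=1_\omega$; this is (\ref{eq:strongs}). For $w\notin\langle W\rangle$, the element $\rho(w)\in G$ is nontrivial, and applying condition (i) with $g=\rho(w)$, $h=1$ gives $d_\omega(w(b_\bullet,h_\bullet),1_\omega)=d_\omega(\theta(\rho(w)),\theta(1))\geqslant \epsilon=\lim_{k\to\omega}\varepsilon_{f(k)}=\ve_{\mathbb G}$; this is (\ref{eq:stronginj}). Hence $h_1,\dots,h_m$ form a strong solution of $W$ in $\mathbb G$ with coefficients $b_1,\dots,b_\ell$.

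For the reverse direction, suppose $W$ is strongly solvable in $\mathbb G$ with coefficients $b_i=\prod_{k\to\omega}a_i^{f(k)}$ and $\ve_{\mathbb G}=\lim_{k\to\omega}\ve_{f(k)}$; let $h_1,\dots,h_m\in\mathbb G$ be a strong solution. Define $\tilde\theta\colon \mathbb F_\ell\ast\mathbb F_m\to\mathbb G$ by $\bar a_i\mapsto b_i$, $\bar x_i\mapsto h_i$. By (\ref{eq:strongs}), $\tilde\theta$ vanishes on $W$, hence on $\langle W\rangle$, so it factors through $\rho$ to give a homomorphism $\theta\colon G\to\mathbb G$ with $\theta(a_i)=b_i$, which is condition (ii). For condition (i), take nontrivial distinct $g,h\in G$; then $g h^{-1}\neq 1$ in $G$, so $gh^{-1}=\rho(w)$ for some $w\in\mathbb F_\ell\ast\mathbb F_m$ with $w\notin\langle W\rangle$, and (\ref{eq:stronginj}) together with bi-invariance of $d_\omega$ gives $d_\omega(\theta(g),\theta(h))=d_\omega(\theta(gh^{-1}),1_\omega)=d_\omega(\tilde\theta(w),1_\omega)\geqslant \ve_{\mathbb G}=\epsilon$. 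Thus $\theta$ satisfies (i) and (ii), and Theorem~\ref{thm:ultra} yields that $W$, hence $G$, is constraint $\mathcal F$-approximable with respect to the $a_i^\alpha$.

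I do not expect a genuine obstacle here: the corollary is essentially a repackaging of Theorem~\ref{thm:ultra}, and the only point requiring a little care is the bookkeeping that $w\notin\langle W\rangle$ in $\mathbb F_\ell\ast\mathbb F_m$ corresponds exactly to $\rho(w)\neq 1$ in $G$, and that condition (i) for arbitrary distinct pairs is equivalent, via bi-invariance, to the one-sided statement for nontrivial elements versus the identity. One should also note that the same $\omega$ and $f$ produced by Theorem~\ref{thm:ultra} are used throughout, so no diagonalization over ultrafilters is needed.
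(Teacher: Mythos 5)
Your proposal is correct and is essentially the intended argument: the paper states Corollary~\ref{cor:ext} without proof, treating it as a direct consequence of Theorem~\ref{thm:ultra}, and your translation---identifying the homomorphism $\theta$ satisfying conditions (i)--(ii) with a strong solution $h_i=\theta(x_i)$ of $W$ in the ultraproduct with coefficients $\prod_{k\to\omega}a_i^{f(k)}$---is exactly the unstated dictionary. Your closing remark, that condition (i) for distinct pairs is equivalent by bi-invariance to the one-sided inequality against the identity, correctly handles the slight imprecision in the phrasing of condition (i).
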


Thus, deciding whether or not there exists a (constraint) $\mathcal F$-approximation of a group reduces to solving equations (with coefficients) given by the group relators in `big'  groups such as $\mathbb G.$ This approach is rather unexplored, besides Theorem~\ref{thm:sol} in this setting (taking $H=\mathbb G$).

We give now an algebraic characterisation of constraint $\mathcal F$-approximability of a group.

\begin{te}\label{te:alg}
A group $G= \mathbb F_\ell\ast\mathbb F_m/ \langle W\rangle$ 
is constraint $\mathcal F$-approximable with respect to $a_1^\alpha, \ldots, a_\ell^\alpha\in G_\alpha, \alpha\in I,$ if and only if $\forall n\in \mathbb N^*$ there exists a homomorphism $\pi_n\colon \mathbb F_\ell\ast\mathbb F_m\to G_\alpha,$ for some $G_\alpha\in \mathcal F,$ such that 

\begin{enumerate}
\item[(1)] $d_{\alpha }\left( \pi_n \left( r\right), 1_\alpha \right) <1/n$ for every group relator $r\in W\subseteq \mathbb F_\ell\ast\mathbb F_m$ of length at most $n$, 

\item[(2)] $d_{\alpha }\left( \pi_n \left( w\right) , 1_\alpha \right)
>\varepsilon _{\alpha }-1/n$ for every $w\in \mathbb F_\ell\ast\mathbb F_m$ of length at most $n$ with $\rho(w)\not=1_G$, 

\item[(3)] $d_{\alpha }\left( \pi_n \left( \bar a_i\right) , a_i^\alpha \right)
<1/n$ for every generator $\bar a_i$ of  $\mathbb F_\ell$, for $i=1,\ldots,\ell.$
\end{enumerate}

\end{te}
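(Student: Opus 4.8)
The plan is to deduce Theorem~\ref{te:alg} from the ultraproduct characterization in Theorem~\ref{thm:ultra}, treating the two implications separately but in parallel, the bridge being the standard ``diagonal/unpacking'' correspondence between a homomorphism into a metric ultraproduct and a sequence of almost-homomorphisms into the factors. First I would prove the forward direction. Assume $G$ is constraint $\mathcal F$-approximable with respect to the $a_i^\alpha$. By Theorem~\ref{thm:ultra} there are a non-principal ultrafilter $\omega$ on $\nz$, a function $f\colon\nz\to I$, and a homomorphism $\theta\colon G\to\prod_{k\to\omega}(G_{f(k)},d_{f(k)})$ satisfying (i) and (ii). Compose with the projection $\rho\colon\mathbb F_\ell\ast\mathbb F_m\twoheadrightarrow G$ to get $\Theta=\theta\circ\rho$, and for each generator lift $\Theta(\bar x_i)$ and $\Theta(\bar a_i)$ to sequences $(g_i^k)_k$ and $(b_i^k)_k$ in the $G_{f(k)}$; define $\pi^{(k)}\colon\mathbb F_\ell\ast\mathbb F_m\to G_{f(k)}$ to be the unique homomorphism sending $\bar x_i\mapsto g_i^k$, $\bar a_i\mapsto b_i^k$. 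Now fix $n\in\nz^*$. There are only finitely many words $w$ of length at most $n$; for each relator $r\in W$ of length $\le n$ we have $\lim_{k\to\omega}d_{f(k)}(\pi^{(k)}(r),1)=d_\omega(\Theta(r),1_\omega)=0$ since $\rho(r)=1_G$; for each $w$ of length $\le n$ with $\rho(w)\ne 1_G$ we have $\lim_{k\to\omega}d_{f(k)}(\pi^{(k)}(w),1)=d_\omega(\theta(\rho(w)),1_\omega)\ge\epsilon\ge\varepsilon_{f(k)}$ in the limit, so the quantity exceeds $\varepsilon_{f(k)}-1/n$ for $\omega$-most $k$; and by (ii), $d_{f(k)}(\pi^{(k)}(\bar a_i),a_i^{f(k)})\to 0$. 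Intersecting these finitely many $\omega$-large sets of indices gives a nonempty set; pick any $k=k(n)$ in it and set $\pi_n=\pi^{(k(n))}$, $G_\alpha=G_{f(k(n))}$. This yields (1)–(3).

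For the converse, suppose the family $\pi_n\colon\mathbb F_\ell\ast\mathbb F_m\to G_{\alpha(n)}$ satisfying (1)–(3) exists. Take any non-principal ultrafilter $\omega$ on $\nz$ and set $f(n)=\alpha(n)$. Form $\mathbb G=\prod_{n\to\omega}(G_{f(n)},d_{f(n)})$ and define $\Theta\colon\mathbb F_\ell\ast\mathbb F_m\to\mathbb G$ by $\bar x_i\mapsto(\pi_n(\bar x_i))_n$, $\bar a_i\mapsto(\pi_n(\bar a_i))_n$ (a homomorphism since each $\pi_n$ is). For any $r\in W$, say of length $L$, condition (1) forces $d_{f(n)}(\pi_n(r),1)<1/n$ for all $n\ge L$, hence $d_\omega(\Theta(r),1_\omega)=0$, so $\Theta$ kills $\langle W\rangle$ and descends to $\theta\colon G\to\mathbb G$. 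For nontrivial $g\in G$, pick a word $w$ of length $L$ with $\rho(w)=g$; by (2), for $n\ge L$ we get $d_{f(n)}(\pi_n(w),1)>\varepsilon_{f(n)}-1/n$, and taking the ultralimit gives $d_\omega(\theta(g),1_\omega)\ge\lim_{n\to\omega}\varepsilon_{f(n)}=:\epsilon_\omega\ge\epsilon>0$; bi-invariance then upgrades this to (i) for every distinct pair $g\ne h$ (apply it to $gh^{-1}$). Finally (3) gives $d_{f(n)}(\pi_n(\bar a_i),a_i^{f(n)})\to 0$, so $\theta(a_i)=\Theta(\bar a_i)=(\pi_n(\bar a_i))_n=(a_i^{f(n)})_n=\prod_{n\to\omega}a_i^{f(n)}$, which is (ii). Theorem~\ref{thm:ultra} now gives constraint $\mathcal F$-approximability of $W$, hence of $G$.

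I expect the only genuinely delicate point is the bookkeeping with the threshold $\epsilon=\inf_\alpha\varepsilon_\alpha$ versus the pointwise $\varepsilon_\alpha$ in conditions (i) and (2): one must be careful that ``$d_\omega(\theta(g),1_\omega)\ge\epsilon$'' in Theorem~\ref{thm:ultra}(i) is what is extractable from (2), using $\varepsilon_{f(n)}\ge\epsilon$, and conversely that in the forward direction the statement (2) with the factor-dependent $\varepsilon_\alpha$ (rather than $\epsilon$) is exactly what survives the ultralimit at a single well-chosen index. The other mild subtlety is that the lifts of $\theta(\bar x_i)$ to sequences are non-canonical, but any choice works and the finitely-many-inequalities argument at each fixed $n$ — precisely the argument already used in the second half of the proof of Theorem~\ref{thm:ultra} — is insensitive to this. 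Everything else (that $\pi^{(k)}$ is a well-defined homomorphism by the universal property of the free product of free groups; that an intersection of finitely many $\omega$-large sets is $\omega$-large hence nonempty; that a finite word's image depends only on finitely many generator values) is routine, so I would state these steps without belaboring them.
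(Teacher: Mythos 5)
Your proof is correct, and your reverse direction is essentially identical to the paper's: build $\pi=\prod_{n\to\omega}\pi_n$, observe that (1) kills $\langle W\rangle$, (2) yields the separation bound after passing to the ultralimit, (3) pins down the images of the $\bar a_i$, and conclude via Theorem~\ref{thm:ultra}. Where you diverge is in the forward direction. The paper's forward implication bypasses Theorem~\ref{thm:ultra} entirely: since constraint $\mathcal F$-approximability is \emph{defined} (Definition~\ref{def:capp}) by the existence, for every $\delta>0$, of a strong $\delta$-solution in some $G_\alpha$ with coefficients exactly $a_1^\alpha,\dots,a_\ell^\alpha$, one simply takes $\delta=1/n$ and reads off $\pi_n$ directly from the strong $1/n$-solution. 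This gives conditions (1) and (2) immediately from Definition~\ref{def:sol}, and, for (3), the \emph{stronger} conclusion $\pi_n(\bar a_i)=a_i^\alpha$ (exact equality, not just within $1/n$), since the coefficients are plugged in on the nose. You instead go the long way around: first repackage the hypothesis as a homomorphism $\theta$ into an ultraproduct (the forward half of Theorem~\ref{thm:ultra}), then lift $\theta$ to factor-level maps $\pi^{(k)}$, and finally extract a good index $k(n)$ at each $n$ by intersecting finitely many $\omega$-large sets. This works — your handling of the $\varepsilon_{f(k)}$ versus $\epsilon=\lim_{k\to\omega}\varepsilon_{f(k)}$ bookkeeping at the end is the right one, since from $\lim_{k\to\omega}\bigl(d_{f(k)}(\pi^{(k)}(w),1)-\varepsilon_{f(k)}\bigr)\geqslant 0$ it follows that the set where $d_{f(k)}(\pi^{(k)}(w),1)>\varepsilon_{f(k)}-1/n$ is $\omega$-large — but it costs you a full passage into and out of the ultraproduct that the paper avoids, and it only delivers the weaker, approximate version of condition (3). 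The direct route is shorter and also conceptually cleaner: it exhibits Theorem~\ref{te:alg} as nothing more than a spelled-out restatement of Definition~\ref{def:capp}, with Theorem~\ref{thm:ultra} reserved for the direction where it is genuinely needed.
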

\begin{proof}
Let $G= \mathbb F_\ell\ast\mathbb F_m/ \langle W\rangle$ be a constraint $\mathcal F$-approximable group with respect to $a_1^\alpha, \ldots, a_\ell^\alpha\in G_\alpha, \alpha\in I$.
For each $n\in\nz$ apply Definition~\ref{def:capp} (and, hence, Definition~\ref{def:sol} with $n=1/\delta$) to get a homomorphism $\pi_n\colon \mathbb F_\ell\ast\mathbb F_m\to G_{\alpha}$ with the required properties. In the third condition, we actually have $\pi_n(\bar a_i)=a_i^\alpha$ for $i=1,\ldots,\ell$.

For the reverse implication, we define the homomorphism to the ultraproduct $\pi\colon\mathbb F_\ell\ast\mathbb F_m\to\prod_{n\to\omega}\left( G_{f(n)},d_{f(n)}\right)$ by $\pi(w)=\prod_{n\to\omega}\pi_n(w)$. The first condition implies $\langle W\rangle\subseteq \Ker\pi$. The second condition implies $d_\omega(\pi(w), 1_\omega)\geqslant\epsilon$ for any $w\notin \langle W\rangle$, and the third condition implies that $\pi(\bar a_i)=\prod_{n\to\omega}a_i^{f(n)}$, for $i=1,\ldots,\ell$. We deduce that $\Ker\pi=\langle W\rangle$ and $\pi$ factors to a group homomorphism of $G$ as in Theorem~\ref{thm:ultra}, whence the conclusion.
\end{proof}

\subsection{Constraint stability of groups}
We turn to the stability of metric approximations and characterize (constraint) $\mathcal F$-stability
through the existence of (constraint) lifts. This generalizes our result~\cite[Theorem 4.2 and observation thereafter]{ArPau} to a much wider setting.

\begin{de}[Constraint $\mathcal F$-stability of groups]\label{def:gcstab}
A group $G= \mathbb F_\ell\ast\mathbb F_m/ \langle W\rangle$ is \emph{(constraint) $\mathcal F$-stable} with respect to $a_1^\alpha, \ldots, a_\ell^\alpha\in G_\alpha, \alpha\in I,$
if $W\subseteq \mathbb F_\ell\ast\mathbb F_m$ is (constraint) $\mathcal F$-stable with respect to $a_1^\alpha, \ldots, a_\ell^\alpha\in G_\alpha, \alpha\in I,$ in the sens of Definition~\ref{def:cstab}.
\end{de}

\begin{de}[Constraint lifts]\label{def:perfect} Let $G=\mathbb F_\ell\ast\mathbb F_m/\langle W\rangle.$ 
Given a non-principal ultrafilter $\omega$ over $\nz$ and a function $f\colon\nz\to I,$ 
a (not necessarily injective) group homomorphism $$\theta\colon G\rightarrow \prod_{k\to\omega}\left( G_{f(k)},d_{f(k)}\right), \hbox{ such that } \theta( a_i)= \prod_{k\to\omega}a_i^{f(k)}, \, i=1, \ldots, \ell$$ is called \emph{constraint liftable} 
with respect to $a_1^\alpha, \ldots, a_\ell^\alpha\in G_\alpha, \alpha\in I,$
if for each $k\in\nz$ there exist $g_i^k\in G_{f(k)}, i=1,\ldots, m$ such that 
$g_1^k,\ldots,g_m^k$ are a solution of $W$ in $G_{f(k)}$ with coefficients $a_1^{f(k)},\ldots, a_\ell^{f(k)}\in G_{f(k)}$ and
$\theta(x_i)=\prod_{%
k\to\omega}g_i^{k}$ for each $i=1,\ldots, m$. 

A \emph{constraint lift} of $\theta$ is the homomorphism $\tilde\theta\colon G\rightarrow \prod_{%
k\in\nz}\left( G_{f(k) },d_{f(k) }\right)$, defined by 
$\tilde\theta(a_i)=\prod_{%
k\in\nz}a_i^{f(k)}$ for each $i=1,\ldots, \ell$
and  by $\tilde\theta( x_i)=\prod_{%
k\in\nz}g_i^k$ for each $i=1,\ldots, m.$

Again, we have notions of a \emph{liftable} homomorphism (cf.~\cite[Definition 4.1]{ArPau}, where it was named \emph{perfect}), and of  a \emph{lift}, 
whenever we consider the preceding conditions  with respect to
the  identity coefficients 
$a_1^\alpha=1_\alpha, \ldots, a_\ell^\alpha=1_\alpha\in G_\alpha, \alpha\in I.$
\end{de}

\begin{te}\label{thm:ultrast}
A group $G=\mathbb F_\ell\ast\mathbb F_m/\langle W\rangle$ is constraint $\mathcal F$-stable with respect to $a_1^\alpha, \ldots, a_\ell^\alpha\in G_\alpha, \alpha\in I,$
if and only if every group homomorphism $\theta\colon G\rightarrow \prod_{%
k\to\omega}\left( G_{f(k) },d_{f(k) }\right)$ such that $\theta( a_i)=\prod_{k\to\omega}a_i^{f(k)}$ for each $i=1,\ldots, \ell$ is constraint liftable, for any $f\colon\nz\to I$.
\end{te}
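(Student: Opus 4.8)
The plan is to prove both implications by a standard ``diagonal argument across the ultrafilter'' matched against the finitary $\delta$-$\varepsilon$ formulation in Definition~\ref{def:cstab}, much as in our earlier work~\cite{ArPau}, but keeping track of the coefficients. For the forward direction, assume $G$ is constraint $\mathcal F$-stable and let $\theta\colon G\to\prod_{k\to\omega}(G_{f(k)},d_{f(k)})$ be a homomorphism with $\theta(a_i)=\prod_{k\to\omega}a_i^{f(k)}$. Choose representatives $\theta(x_i)=\prod_{k\to\omega}g_i^k$ with $g_i^k\in G_{f(k)}$; since $\theta$ is a homomorphism, for each $w\in W$ we have $\lim_{k\to\omega}d_{f(k)}(w(a_1^{f(k)},\dots,a_\ell^{f(k)},g_1^k,\dots,g_m^k),1_{f(k)})=0$. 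Hence, given any $\delta>0$, the set $A_\delta$ of indices $k$ for which $(g_i^k)_i$ is a $\delta$-solution of $W$ with the prescribed coefficients $a_1^{f(k)},\dots,a_\ell^{f(k)}$ lies in $\omega$. Now apply constraint $\mathcal F$-stability: for every $\varepsilon>0$ there is $\delta(\varepsilon)>0$ such that on $A_{\delta(\varepsilon)}$ we can replace $(g_i^k)_i$ by a genuine solution $(\tilde g_i^k)_i$ of $W$ with coefficients $a_i^{f(k)}$ and $d_{f(k)}(g_i^k,\tilde g_i^k)<\varepsilon$. Run this along a sequence $\varepsilon_n\downarrow 0$: the sets $A_{\delta(\varepsilon_n)}$ are in $\omega$, so one can pass to a nested sequence $B_1\supseteq B_2\supseteq\cdots$ of $\omega$-large sets and define $\tilde g_i^k$ to be the solution obtained at level $n$ when $k\in B_n\setminus B_{n+1}$, and arbitrary (say, any solution, which exists because $(g_i^k)_i$ is a $\delta$-solution hence in particular $W$ is $\delta$-solvable; for $k\notin B_1$ set $\tilde g_i^k=g_i^k$). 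Then $\lim_{k\to\omega}d_{f(k)}(g_i^k,\tilde g_i^k)=0$, so $\prod_{k\to\omega}\tilde g_i^k=\theta(x_i)$, and $(\tilde g_i^k)_i$ is a solution of $W$ with coefficients $a_i^{f(k)}$ for every $k\in B_1$, i.e.\ $\omega$-almost everywhere. Adjusting the $\tilde g_i^k$ on the $\omega$-null complement of $B_1$ (which does not change the ultraproduct class) makes $(\tilde g_i^k)_i$ a solution for \emph{all} $k$, witnessing that $\theta$ is constraint liftable.

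For the reverse direction, I argue by contraposition. Suppose $G$ is not constraint $\mathcal F$-stable with respect to $a_1^\alpha,\dots,a_\ell^\alpha$. Then there is $\varepsilon_0>0$ such that for every $n\in\nz$ there is $\alpha_n\in I$ and a $1/n$-solution $g_1^n,\dots,g_m^n\in G_{\alpha_n}$ of $W$ with coefficients $a_1^{\alpha_n},\dots,a_\ell^{\alpha_n}$ which is $\varepsilon_0$-far, in the $d_{\alpha_n}$-metric, from \emph{every} solution of $W$ with those coefficients. Put $f(n)=\alpha_n$, fix any non-principal ultrafilter $\omega$ on $\nz$, and set $\theta(x_i)=\prod_{n\to\omega}g_i^n$, $\theta(a_i)=\prod_{n\to\omega}a_i^{f(n)}$. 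Because the $g_i^n$ form $1/n$-solutions with $1/n\to 0$, each relator $w\in W$ evaluates to an element at $d_\omega$-distance $0$ from the identity, so $\langle W\rangle\subseteq\ker$ of the induced map on $\mathbb F_\ell\ast\mathbb F_m$; hence $\theta$ descends to a genuine homomorphism $G\to\prod_{n\to\omega}(G_{f(n)},d_{f(n)})$ with $\theta(a_i)=\prod_{n\to\omega}a_i^{f(n)}$. By hypothesis this $\theta$ is constraint liftable: there are solutions $\tilde g_i^n\in G_{f(n)}$ of $W$ with coefficients $a_i^{f(n)}$ and $\theta(x_i)=\prod_{n\to\omega}\tilde g_i^n$, i.e.\ $\lim_{n\to\omega}d_{f(n)}(g_i^n,\tilde g_i^n)=0$. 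In particular the set of $n$ with $d_{f(n)}(g_i^n,\tilde g_i^n)<\varepsilon_0$ for all $i=1,\dots,m$ belongs to $\omega$, hence is non-empty; for such $n$, $\tilde g_1^n,\dots,\tilde g_m^n$ is a solution of $W$ with the prescribed coefficients that is $\varepsilon_0$-close to $(g_i^n)_i$, contradicting the choice of $(g_i^n)_i$. This proves constraint $\mathcal F$-stability.

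I expect the only genuinely delicate point to be the bookkeeping in the forward direction: arranging, from the $\varepsilon$-dependent $\delta(\varepsilon)$ supplied by stability, a \emph{single} choice of replacements $\tilde g_i^k\in G_{f(k)}$ that simultaneously (a) lie at $d_{f(k)}$-distance tending to $0$ along $\omega$ from $g_i^k$, and (b) are exact solutions of $W$ with the \emph{prescribed} coefficients $a_i^{f(k)}$ for $\omega$-almost every $k$ — and then noting that one may freely modify the $\tilde g_i^k$ on an $\omega$-null set of indices (e.g.\ set them equal to $g_i^k$ there, or to any fixed solution) so that ``solution for $\omega$-a.e.\ $k$'' upgrades to ``solution for all $k$'' as required by Definition~\ref{def:perfect}, without altering the ultraproduct class. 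Everything else is the routine translation between the metric ultraproduct and the finitary statements, exactly as in the proofs of Theorem~\ref{thm:ultra} and Theorem~\ref{te:alg}; note in particular that the constraint condition $\theta(a_i)=\prod_{k\to\omega}a_i^{f(k)}$ is preserved throughout because we never perturb the coefficients, only the variables.
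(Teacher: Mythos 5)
Your proof is correct and follows essentially the same route as the paper's: choosing representatives $g_i^k$ for $\theta(x_i)$, observing that they are $\delta$-solutions for $\omega$-a.e.\ $k$, invoking constraint $\mathcal F$-stability along a sequence $\varepsilon_n\downarrow 0$ via a diagonal argument across nested $\omega$-large sets, and, in the reverse direction, building a homomorphism from a sequence of $\delta_k$-solutions that are $\varepsilon_0$-far from any exact solution and showing it cannot be constraint liftable. The only place you should tighten the bookkeeping is the choice of $\tilde g_i^k$ on $\bigcap_n B_n$ (intersect each $B_n$ with the tail $\{n,n+1,\dots\}$ so the level $n(k)$ is finite for every $k$), but this is the standard diagonalization device and does not affect the argument; the paper is equally terse on this point.
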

\begin{proof}
Let $W\subseteq\mathbb F_\ell\ast\mathbb F_m$ be the system which is constraint $\mathcal F$-stable and let $\theta$ be a group homomorphism as above with $\theta(a_i)=\prod_{k\to\omega}a_i^{f(k)}$ for each $i=1,\ldots, \ell$. Choose maps $\theta_k\colon G\to G_{f(k)}$  such that:
\begin{enumerate}
\item $\theta(g)=\prod_{k\to\omega}\theta_k(g)$, for all $g\in G$;
\item $\theta_k(a_i)=a_i^{f(k)}$ for each $i=1,\ldots, \ell$.
\end{enumerate}
Then $\lim_{k\to\omega}d_{f(k)}\left(w(a_1^{f(k)},\ldots,a_\ell^{f(k)},\theta_k(x_1),\ldots,\theta_k(x_m)),1_{f(k)}\right)=0$ for all $w\in W$. It follows that $\theta_k(x_1),\ldots,\theta_k(x_m)$ are eventually $\delta$-solutions for an arbitrary prescribed $\delta>0$. According to Definition \ref{def:cstab}, there exist homomorphisms $\pi_k\colon\mathbb F_m\to G_{f(k)}$ such that $\pi_k(\bar x_1),\ldots,\pi_k(\bar x_m)$ is a solution to $W$, $\lim_{k\to\omega}d_{f(k)}\left(\pi_k(\bar x_j),\theta_k(x_j)\right)=0$ for each $j=1,\ldots, m,$ and $\pi_k(v_i)=a_i^{f(k)}$ for any $v_i\in\mathbb F_m$ with $\rho(v_i)=a_i$. Then $\tilde\theta\colon G\to\prod_{k\in\nz}(G_{f(k)},d_{f(k)})$, $\tilde\theta(x_j)=\left(\pi_k(\bar x_j)\right)_{k\in\nz}$ is a constraint lift of $\theta$.

For the reverse implication assume that $G$ is not constraint $\mathcal F$-stable. Then there exists $\varepsilon>0$ such that for each $\delta>0$ there is $\alpha\in I$ and $g_1,\ldots,g_m\in G_\alpha$ a $\delta$-solution of $W$ with no $\varepsilon$-close, with respect to $d_\alpha$, solution to $W$, that restricted to $a_i$ is equal to $a_i^\alpha$. Choosing a sequence $\delta_k$ decreasing to $0$, we have such $\delta_k$-solutions $g_1^k,\ldots, g_m^k\in G_{f(k)}$ for some $f\colon \nz\to I$. We assign $x_j\mapsto \prod_{k\to\omega}g_j^k$ for each $j=1,\ldots, m,$ whence a homomorphism $\theta\colon G\to  \prod_{%
k\to\omega}(G_{f(k)},d_{f(k)})$ that admits no constraint lift.
\end{proof}

Extending our result from~\cite[Section 3]{ArPau}, we show that
the definition of (constraint) $\mathcal F$-stability does not depend on the particular choice of finite presentation of the group.

\begin{cor}\label{cor:equiv}
Let $W_1$ and $W_2$ be two equivalent systems. If $W_1$ is constraint $\mathcal F$-stable then so is $W_2$.
\end{cor}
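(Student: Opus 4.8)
The plan is to reduce the statement to the ultraproduct characterization of constraint $\mathcal F$-stability given in Theorem~\ref{thm:ultrast}, by transporting constraint lifts along the given isomorphism. Write $G_i=\mathbb F_\ell\ast\mathbb F_{m_i}/\langle W_i\rangle$ for $i=1,2$; let $a_j^{(i)}$ denote the image in $G_i$ of the $j$-th free generator of $\mathbb F_\ell$, let $x_j^{(i)}$ denote the image in $G_i$ of the $j$-th free generator of $\mathbb F_{m_i}$, and let $\phi\colon G_1\to G_2$ be the isomorphism provided by equivalence, so that $\phi(a_j^{(1)})=a_j^{(2)}$ for all $j$. Throughout I would use the following dictionary, which is the categorical remark after Theorem~\ref{thm:sol} applied with $H=G_{f(k)}$: a tuple $(h_1,\dots,h_{m_i})\in G_{f(k)}^{m_i}$ is a solution of $W_i$ in $G_{f(k)}$ with coefficients $a_1^{f(k)},\dots,a_\ell^{f(k)}$ if and only if the assignment sending $\bar a_j\mapsto a_j^{f(k)}$ and the $j$-th free generator of $\mathbb F_{m_i}$ to $h_j$ descends to a group homomorphism $\psi\colon G_i\to G_{f(k)}$ (necessarily with $\psi(a_j^{(i)})=a_j^{f(k)}$ and $\psi(x_j^{(i)})=h_j$). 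One implication is that this assignment kills $\langle W_i\rangle$ precisely when the $h_j$ solve $W_i$; the other is immediate, since every relator of $W_i$ maps to $1$ in $G_i$.

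Now suppose $W_1$ is constraint $\mathcal F$-stable. Fix a non-principal ultrafilter $\omega$ over $\nz$, a function $f\colon\nz\to I$, and an arbitrary homomorphism $\theta_2\colon G_2\to\prod_{k\to\omega}(G_{f(k)},d_{f(k)})$ with $\theta_2(a_j^{(2)})=\prod_{k\to\omega}a_j^{f(k)}$ for all $j$; by Theorem~\ref{thm:ultrast} it suffices to show that $\theta_2$ is constraint liftable. Set $\theta_1:=\theta_2\circ\phi\colon G_1\to\prod_{k\to\omega}(G_{f(k)},d_{f(k)})$. Since $\phi(a_j^{(1)})=a_j^{(2)}$, we have $\theta_1(a_j^{(1)})=\prod_{k\to\omega}a_j^{f(k)}$, so Theorem~\ref{thm:ultrast} applied to the constraint $\mathcal F$-stable system $W_1$ provides, for each $k$, a solution $g_1^k,\dots,g_{m_1}^k\in G_{f(k)}$ of $W_1$ with coefficients $a_1^{f(k)},\dots,a_\ell^{f(k)}$ such that $\theta_1(x_j^{(1)})=\prod_{k\to\omega}g_j^k$ for each $j$. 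Let $\psi_k\colon G_1\to G_{f(k)}$ be the homomorphism attached to $(g_1^k,\dots,g_{m_1}^k)$ by the dictionary, and let $\Psi\colon G_1\to\prod_{k\to\omega}(G_{f(k)},d_{f(k)})$ be the homomorphism $g\mapsto\prod_{k\to\omega}\psi_k(g)$ (it is a homomorphism, being the composition of $g\mapsto(\psi_k(g))_k$ with the quotient map). Then $\Psi$ and $\theta_1$ agree on every generator $a_j^{(1)}$ and on every $x_j^{(1)}$, hence $\Psi=\theta_1$.

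Finally, for $j=1,\dots,m_2$ put $\tilde g_j^k:=\psi_k\bigl(\phi^{-1}(x_j^{(2)})\bigr)\in G_{f(k)}$. The composition $\psi_k\circ\phi^{-1}\colon G_2\to G_{f(k)}$ sends $a_j^{(2)}$ to $a_j^{f(k)}$ and $x_j^{(2)}$ to $\tilde g_j^k$, so by the dictionary $\tilde g_1^k,\dots,\tilde g_{m_2}^k$ is a solution of $W_2$ in $G_{f(k)}$ with coefficients $a_1^{f(k)},\dots,a_\ell^{f(k)}$; and $\prod_{k\to\omega}\tilde g_j^k=\Psi\bigl(\phi^{-1}(x_j^{(2)})\bigr)=\theta_1\bigl(\phi^{-1}(x_j^{(2)})\bigr)=\theta_2(x_j^{(2)})$. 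Hence $\theta_2$ is constraint liftable, and since $\omega$, $f$, and $\theta_2$ were arbitrary, Theorem~\ref{thm:ultrast} yields that $W_2$ is constraint $\mathcal F$-stable.

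I expect the only real obstacle here to be bookkeeping rather than mathematics: one must carefully keep the two variable sets $\mathbb F_{m_1}$ and $\mathbb F_{m_2}$ (and their images) apart and check that every homomorphism entering the argument fixes the coefficients $a_j^{f(k)}$ exactly, not merely up to $d_{f(k)}$; this is precisely where the hypothesis that $\phi$ is an $\mathbb F_\ell$-isomorphism is used. Once this is set up, each verification is immediate from the definitions, and one can observe that the argument is symmetric in $W_1$ and $W_2$, so in fact $W_1$ is constraint $\mathcal F$-stable if and only if $W_2$ is.
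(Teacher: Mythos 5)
Your argument is correct and follows precisely the route the paper has in mind, since the paper's own proof is the one-line remark "It is straightforward, by Theorem~\ref{thm:ultrast}" and you have simply filled in the bookkeeping: transporting a homomorphism $\theta_2$ along $\phi$ to $\theta_1=\theta_2\circ\phi$, invoking constraint liftability of $\theta_1$ from $W_1$-stability, and transporting the resulting exact solutions back along $\phi^{-1}$ via the observation that exact solutions of $W_i$ with the given coefficients correspond bijectively to homomorphisms $G_i\to G_{f(k)}$ fixing the $a_j^{f(k)}$. No genuinely different idea is introduced, and the symmetry remark at the end is a correct observation.
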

\begin{proof}
It is straightforward, by Theorem \ref{thm:ultrast}.
\end{proof}

Classical fully residually-$\mathcal F$ groups are basic examples of $\mathcal F$-approximable groups.
For instance, residually symmetric groups are examples of sofic groups, residually finite groups are examples of weakly sofic groups, etc. 
Clearly, constraint analogues of fully residually-$\mathcal F$ groups are natural examples of
constraint $\mathcal F$-approximable groups.

\begin{de}[Constraint fully residually-$\mathcal F$ groups]
A group $G$ is \emph{constraint fully residually-$\mathcal F$} with respect to $a_1,\ldots, a_\ell\in G$ and $a_1^\alpha, \ldots, a_\ell^\alpha\in G_\alpha, \alpha\in I$
if for each set of non-identity elements $g_1,\ldots, g_r\in G$ there exists a normal subgroup $K\unlhd G$ such that 
\begin{enumerate}
\item $g_1,\ldots, g_r \not\in K;$
\item $G/K\cong G_\alpha$ for some $\alpha\in I;$
\item $a_iK=a_i^\alpha$ for $i=1,\ldots, \ell.$
\end{enumerate}
\end{de}

Dropping condition (3) above yields the usual definition of fully residually-$\mathcal F$ groups.
Considering one non-identity element (i.e. taking $r=1$), defines the class of (constraint) residually-$\mathcal F$ groups. 
A careful choice of constraints leads easily to examples of groups that are
fully residually finite but that are not constraint fully residually finite. 

\begin{ex}(Integers)
Set $\mathcal F^{cyc}_{\{0,1\}}=((\Z/p\Z)^\times, d_{\{0,1\}}, 1_p)_{p \hbox{ \scriptsize{prime}},\, p\geqslant 3},$
where $(\Z/p\Z)^\times$ is the group of invertible elements $\mod p$, i.e. the cyclic group of order $p-1$,  with identity element $1_p$, and 
with the trivial $\{0, 1\}$-valued metric $d_{\{0,1\}}$ (induced by the length function assigning  length 1
to each non-trivial group element).

Obviously, $\Z$ is fully residually-$\mathcal F^{cyc}_{\{0,1\}}$.
For each $p$, choose $a^p\in \Z/p\Z$, an element that is a quadratic nonresidue $\mod p$. Then, using the above notation (for $\ell=1$ and $\alpha=p$), $\Z$ is not constraint fully residually-$\mathcal F^{cyc}_{\{0,1\}}$ with respect to
$a=2\in \Z$ and $a^p\in \Z/p\Z$.

\end{ex}

Next we express robustness of (constraint)  $\mathcal F$-approximable  $\mathcal F$-stable groups.
It extends our previous result on stable sofic groups~\cite[Theorem 4.3]{ArPau} to arbitrary (constraint) metric approximations. 

We write that $\mathcal S\mathcal F=\mathcal F$ if all subgroups of every $G_{\alpha}\in\mathcal F$ 
belong to $\mathcal F$.

\begin{te} Let $\mathcal S\mathcal F=\mathcal F$.
If $G=\mathbb F_\ell\ast\mathbb F_m/\langle W\rangle$ is both constraint $\mathcal F$-approximable and constraint $\mathcal F$-stable,
then $G$ is constraint fully residually-$\mathcal F.$  
\end{te}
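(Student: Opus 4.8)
The plan is to combine the ultraproduct characterizations of constraint approximability (Theorem~\ref{thm:ultra} / Corollary~\ref{cor:ext}) and of constraint stability (Theorem~\ref{thm:ultrast}) with the hypothesis $\mathcal S\mathcal F=\mathcal F$ to manufacture, for any prescribed finite set of non-identity elements of $G$, a quotient of $G$ isomorphic to a member of $\mathcal F$ that separates those elements and respects the coefficient constraints. Concretely, fix non-identity elements $g_1,\ldots,g_r\in G$. First I would invoke constraint $\mathcal F$-approximability in the form of Theorem~\ref{thm:ultra}: there is a non-principal ultrafilter $\omega$ on $\nz$, a function $f\colon\nz\to I$, and a homomorphism $\theta\colon G\to\prod_{k\to\omega}(G_{f(k)},d_{f(k)})$ which is injective in the strong sense that $d_\omega(\theta(g),\theta(h))\geqslant\epsilon$ for distinct $g,h\in G$, and with $\theta(a_i)=\prod_{k\to\omega}a_i^{f(k)}$ for $i=1,\ldots,\ell$.

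Next I would apply constraint $\mathcal F$-stability via Theorem~\ref{thm:ultrast}: since $\theta$ is a homomorphism with the required behaviour on the coefficients, it is constraint liftable, so there exist $g_i^k\in G_{f(k)}$ with $g_1^k,\ldots,g_m^k$ a genuine solution of $W$ in $G_{f(k)}$ with coefficients $a_1^{f(k)},\ldots,a_\ell^{f(k)}$, and with $\theta(x_i)=\prod_{k\to\omega}g_i^k$. Because $g_1^k,\ldots,g_m^k$ solve $W$ with these coefficients, for each $k$ there is a well-defined homomorphism $\psi_k\colon G\to G_{f(k)}$ sending $a_i\mapsto a_i^{f(k)}$ and $x_i\mapsto g_i^k$; indeed $\theta_k:=\psi_k$ satisfies $\theta(g)=\prod_{k\to\omega}\theta_k(g)$ for all $g\in G$. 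Now the separation statement in Theorem~\ref{thm:ultra}(i) says precisely that $d_\omega(\theta(g_j),1_\omega)\geqslant\epsilon>0$ for each $j=1,\ldots,r$, which unwinds to $\lim_{k\to\omega}d_{f(k)}(\psi_k(g_j),1_{f(k)})\geqslant\epsilon$. Since $\epsilon=\inf_\alpha\varepsilon_\alpha>0$, the set of $k$ for which $\psi_k(g_j)\neq 1_{f(k)}$ for all $j=1,\ldots,r$ simultaneously lies in $\omega$, hence is nonempty; pick one such $k_0$.

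For that $k_0$, set $K=\Ker\psi_{k_0}\unlhd G$. Then $g_j\notin K$ for $j=1,\ldots,r$ by construction, $a_iK=a_i^{f(k_0)}$ under the induced embedding, and $G/K\cong\psi_{k_0}(G)$, which is a subgroup of $G_{f(k_0)}\in\mathcal F$; by the hypothesis $\mathcal S\mathcal F=\mathcal F$ this subgroup is itself a member of $\mathcal F$. This verifies all three conditions in the definition of constraint fully residually-$\mathcal F$ with respect to $a_1,\ldots,a_\ell\in G$ and the chosen $a_1^\alpha,\ldots,a_\ell^\alpha$, and since $g_1,\ldots,g_r$ were arbitrary non-identity elements, $G$ is constraint fully residually-$\mathcal F$.

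The only genuinely delicate point, and the step I would write most carefully, is the bookkeeping that ties together the three ultrafilter data streams: one must arrange a single $\omega$ and $f$ that work for both the approximability input and the stability output, and one must check that the maps $\theta_k$ extracted from constraint liftability can be taken to be honest group homomorphisms $G\to G_{f(k)}$ (not merely set maps), which is exactly what the relation ``$g_1^k,\ldots,g_m^k$ is a solution of $W$ with coefficients $a_i^{f(k)}$'' guarantees via the universal property of $G=\mathbb F_\ell\ast\mathbb F_m/\langle W\rangle$. Everything else — the passage from $d_\omega\geqslant\epsilon$ to ``eventually nonzero along $\omega$'' and the invocation of $\mathcal S\mathcal F=\mathcal F$ — is routine. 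I expect no serious obstacle beyond this coordination.
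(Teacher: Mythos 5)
Your proof is correct and follows essentially the same route as the paper's: use Theorem~\ref{thm:ultra} to obtain an injective $\theta$ into the metric ultraproduct with the prescribed coefficients, use Theorem~\ref{thm:ultrast} to constraint-lift $\theta$ to honest level-wise homomorphisms $\psi_k\colon G\to G_{f(k)}$, pick one coordinate $k_0$ where all the finitely many given non-identity elements survive, and invoke $\mathcal S\mathcal F=\mathcal F$ to conclude the image lies in $\mathcal F$. You simply spell out more explicitly than the paper does why the $\psi_k$ factor through $G$ and why a single $k_0$ works for all $g_1,\ldots,g_r$ (via the $\omega$-almost-everywhere separation $d_{f(k)}(\psi_k(g_j),1_{f(k)})>0$ and the closure of $\omega$ under finite intersection).
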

\begin{proof}
Since $G$ is constraint $\mathcal F$-approximable, by Theorem \ref{thm:ultra}, there exists an injective group homomorphism $$\theta\colon G\hookrightarrow\prod_{k\to\omega}(G_{f(k)},d_{f(k)})$$ such that $\theta(a_i)=\prod_{k\to\omega}a_i^{f(k)}$ for  $i=1,\ldots, \ell$. Since $G$ is constraint $\mathcal F$-stable, by Theorem \ref{thm:ultrast}, there exists $\tilde\theta\colon G\to\prod_{k\in\nz}G_{f(k)}$, a constraint lift of $\theta$. 
Given finitely many non-identity elements $g_1,\ldots, g_r\in G$, it remains to choose $k\in\nz$ such that $\tilde\theta(g_j)$ is non-identity for $j=1,\ldots,r$. This is possible as $\theta$ and, thus, $\tilde\theta$ are injective homomorphisms.
Since $\mathcal S\mathcal F=\mathcal F$, their images are groups from $\mathcal F$. 
\end{proof}

\section{Examples of non constraint sofic approximations}\label{sec:sofic}

Let us recapitulate the context of constraint metric approximability: 
$\mathcal F=(G_\alpha,d_\alpha,\ve_\alpha)_{\alpha\in I}$ is a family of groups, $\ell\geqslant 0,$ and we fix $a_1^\alpha,\ldots,a_\ell^\alpha\in G_\alpha$ for each $\alpha\in I$. Further, $G$ is an arbitrary countable group where we  fix $\ell$ group elements $a_1,\ldots, a_\ell\in G$. We denote this data by $(G\restriction a_1, \ldots, a_\ell)$ and this is the object we want to constraint $\mathcal F$-approximate or, in contrast, for which we show that it admits no any constraint $\mathcal F$-approximation by $(G_\alpha \restriction a_1^\alpha,\ldots,a_\ell^\alpha)_{\alpha\in I}$.

Let $H\leqslant G$ be the subgroup generated by $a_1,\ldots,a_\ell\in G$. If the group $H$ cannot be $\mathcal F$-approximated (without constraints) using only 
the coefficients $a_i^\alpha,\, i=1, \ldots, \ell$, then $G$ is not constraint $\mathcal F$-approximable by $(G_\alpha \restriction a_1^\alpha,\ldots,a_\ell^\alpha)_{\alpha\in I}$ in a trivial way. Thus, when constructing a meaningful non-trivial counter-example to the existence of a constraint $\mathcal F$-approximation of $G$, we have to make sure that there indeed exist homomorphisms $\theta\colon H\to\prod_{k\to\omega}(G_{f(k)},d_{f(k)})$ such that $\theta(a_i)=\prod_{k\to\omega}a_i^{f(k)}$.  Thus, the subgroup $H\leqslant G$ is viewed as the `fixed part', the one for which the $\mathcal F$-approximation is already given and cannot be changed. Our concept of constraint $\mathcal F$-approximability makes  rigorous 
the analysis of whether or not
such a `fixed' approximation can be extended from a subgroup $H$ to the ambient group $G$.

We produce now an example where there is no such an extension. This is done in the realm of
 \emph{constraint sofic} approximations. That is,  the approximating family is   $\mathcal F^{sof}=(S_n, d_H, 1_n)_{n\in \mathbb N},$
where $S_n$ denotes the \emph{symmetric group} acting on the set $\{1,\ldots,n\}$, with the identity element $1_n\in S_n$, and $d_H$ denotes
the \emph{normalised Hamming distance} defined, for two elements $p,q\in S_n$, by 
$$d_H(p,q)=\frac1nCard\left\{i:p(i)\neq q(i)\right\}.$$ 
The metric ultraproduct of $S_{n_k}, k\in\nz$ with respect to the normalized Hamming distance is the \emph{universal sofic group}, an object introduced by Elek-Szabo~\cite{ElSza}: $$\Pi_{k\to\omega}S_{n_k}=\Pi_{k\in\nz}S_{n_k}\slash\{(p_k)_{k\in\nz}\in\Pi_{k\in\nz}S_{n_k}:\lim_{k\to\omega}d_H(p_k,1_{n_k})=0\},$$
endowed as usual with the canonical bi-invariant metric $d_\omega$.

The following result  provides an explicit example of a group which is not constraint approximable by a subfamily of $\mathcal F^{sof}$, in a non-trivial way (cf. Example~\ref{ex:yes}). 

\begin{te}\label{thm:ncs}
There exist $a^{n_k}_1, a_2^{n_k}\in S_{n_k}, k\in \mathbb N$ such that
the group $\mathbb F_2\times\Z=\langle a_1, a_2, x \mid [a_1, x ]=1, [a_2, x]=1\rangle$
 is not constraint $(S_{n_k}, d_H, 1)_{k\in \mathbb N}$-approximable with respect to $a^{n_k}_1, a_2^{n_k}, k\in \mathbb N$.
\end{te}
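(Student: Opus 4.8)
The plan is to reduce the statement to one assertion: \emph{there is a sofic representation of $\mathbb F_2$ whose commutant in the universal sofic group is trivial}. Concretely, I would choose $a_1^{n_k},a_2^{n_k}$ so that the homomorphism $\sigma\colon\mathbb F_2\to\prod_{k\to\omega}(S_{n_k},d_H)$ with $\sigma(a_i)=\prod_{k\to\omega}a_i^{n_k}$ is injective and satisfies $d_\omega(\sigma(w),1_\omega)=1$ for every $w\ne 1$ (so $\sigma$ is a genuine sofic embedding of $\mathbb F_2$, matching $\epsilon=\lim_{k\to\omega}\varepsilon_{n_k}=1$), and such that the only element of $\prod_{k\to\omega}S_{n_k}$ commuting with $\sigma(\mathbb F_2)$ is $1_\omega$ --- and, by a uniform spectral gap below, the same holds for $\sigma$ along any subsequence. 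Granting this, Theorem~\ref{thm:ncs} follows from Theorem~\ref{thm:ultra}: in $G=\mathbb F_2\times\mathbb Z=\langle a_1,a_2,x\mid [a_1,x]=[a_2,x]=1\rangle$ the element $x$ commutes with $a_1$ and $a_2$, so any homomorphism $\theta\colon G\to\prod_{k\to\omega}(S_{n_k},d_H)$ (along any subsequence) sending $a_i$ to $\prod_{k\to\omega}a_i^{n_k}$ must send $x$ into the commutant of $\theta(\langle a_1,a_2\rangle)$, which is trivial; hence $\theta(x)=1_\omega=\theta(1_G)$ while $x\ne 1_G$, contradicting condition~(i) of Theorem~\ref{thm:ultra}. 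Therefore no such $\theta$ exists and $G$ is not constraint $(S_{n_k},d_H,1)_{k\in\mathbb N}$-approximable with respect to $a_1^{n_k},a_2^{n_k}$.

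For the construction I would use an arithmetic model. Realise $\mathbb F_2$ as a finite-index free subgroup $F\leqslant\SL_2(\Z)$ of rank $2$ (such exist; one may take the Sanov subgroup, which is free of rank $2$ and of index $12$), with a fixed free basis $s_1,s_2$. For each odd prime $p$ the reduction $F\to\SL_2(\F_p)$ is surjective, so $F$ acts on $\Omega_p:=\mathbb P^1(\F_p)$, $|\Omega_p|=p+1$, and this action is $2$-transitive (it factors through $\mathrm{PSL}_2(\F_p)$). Fix an increasing sequence of odd primes $p_k$, set $n_k=p_k+1$, and let $a_i^{n_k}\in S_{\Omega_{p_k}}\cong S_{n_k}$ be the image of $s_i$. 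A nontrivial $w\in F$ is a matrix $\ne\pm I$, so for all but finitely many $p$ it reduces mod $p$ to a nontrivial M\"obius transformation and thus fixes at most two points of $\Omega_p$; hence $d_H(\sigma_k(w),1_{n_k})\to 1$ and $\sigma$ is a sofic embedding with $\epsilon=1$. (As the single action $F\curvearrowright\mathbb P^1(\F_p)$ is an expander family by Selberg's theorem, $\sigma$ is moreover ergodic on the Loeb space, i.e.\ an extreme point of the space of sofic representations of $\mathbb F_2$; this is where the open question of \cite{Pa1} gets a negative answer.)

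For the triviality of the commutant, let $v=\prod_{k\to\omega}u_k$ commute with $\sigma(a_1)$ and $\sigma(a_2)$; I would show $d_H(u_k,1_{n_k})\to_\omega 0$ via the graph of $u_k$. Put $\Gamma_k=\{(y,u_k(y)):y\in\Omega_{p_k}\}\subseteq\Omega_{p_k}^2$ and consider the diagonal action $\tilde a_i^{n_k}:=a_i^{n_k}\times a_i^{n_k}$ on $\Omega_{p_k}^2$. Then $|\Gamma_k\triangle\tilde a_i^{n_k}\Gamma_k|=2\,n_k\,d_H([u_k,a_i^{n_k}],1_{n_k})$, which tends to $0$ relative to $n_k$ along $\omega$, so $\Gamma_k$ is asymptotically invariant under the diagonal $F$-action. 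Both $\Delta_k$ and $\Omega_{p_k}^2\setminus\Delta_k$ are $F$-invariant, and $S_k:=\Gamma_k\cap(\Omega_{p_k}^2\setminus\Delta_k)$ has exactly $n_k\,d_H(u_k,1_{n_k})$ elements, so $S_k$ is an asymptotically invariant subset of $\Omega_{p_k}^2\setminus\Delta_k$ with $\sum_i|S_k\triangle\tilde a_i^{n_k}S_k|\leqslant 2n_k\sum_i d_H([u_k,a_i^{n_k}],1_{n_k})$. Now $\Omega_{p_k}^2\setminus\Delta_k\cong\SL_2(\F_{p_k})/D_{p_k}$ ($D_p$ the diagonal torus, $|D_p|=p-1$) by $2$-transitivity, and $\ell^2(\SL_2(\F_p)/D_p)=\ell^2(\SL_2(\F_p))^{D_p}$ is an $F$-invariant subspace of $\ell^2(\SL_2(\F_p))$ containing the constants. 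Property $(\tau)$ for $\SL_2(\Z)$ relative to congruence subgroups (Selberg's $3/16$ theorem), which passes to the finite-index subgroup $F$, yields a spectral gap for the Cayley graphs of $\SL_2(\F_p)$ in $s_1,s_2$, uniform in $p$; restricting to the above invariant subspace transfers it to the diagonal $F$-action on $\Omega_{p_k}^2\setminus\Delta_k$, with some gap $\lambda_0>0$ independent of $k$. Since $|S_k|\leqslant n_k\ll\tfrac12|\Omega_{p_k}^2\setminus\Delta_k|$, the Cheeger inequality gives $\sum_i|S_k\triangle\tilde a_i^{n_k}S_k|\geqslant c\,\lambda_0\,|S_k|$, whence $d_H(u_k,1_{n_k})\leqslant (c\lambda_0)^{-1}\,2\sum_i d_H([u_k,a_i^{n_k}],1_{n_k})\to_\omega 0$. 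Thus $v=1_\omega$ and the commutant is trivial.

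The hard part is the uniform spectral gap for the \emph{diagonal} (doubled) action on $\Omega_p^2\setminus\Delta_p$: one must verify carefully that it does follow from Selberg's theorem via the inclusion $\ell^2(\SL_2(\F_p)/D_p)\subseteq\ell^2(\SL_2(\F_p))$, that property $(\tau)$ indeed survives passage to the finite-index free subgroup $F$ and to its free generating set, and that the Cheeger/expander estimates are used with correct normalisations. (Were one to prefer the ``generic'' model with $a_i^{n_k}$ uniformly random in $S_{n_k}$, the corresponding spectral gap for the doubled action of random permutations would be the genuinely difficult input, requiring a Friedman/Bordenave-type trace argument.) The remaining ingredients --- the graph-of-a-permutation dictionary between almost-commuting and asymptotic invariance, the Cheeger inequality, and the reduction to Theorem~\ref{thm:ultra} --- are routine.
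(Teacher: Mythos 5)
Your proposal is correct in outline and follows the same high-level reduction as the paper: both arguments go through the existence of a sofic representation $\sigma\colon\mathbb F_2\hookrightarrow\prod_{k\to\omega}S_{n_k}$ whose commutant stays trivial along any re-indexing $f\colon\nz\to\nz$, and then deduce via Theorem~\ref{thm:ultra} that any $\theta\colon\mathbb F_2\times\Z\to\prod_{k\to\omega}S_{n_{f(k)}}$ with $\theta(a_i)=\prod_{k\to\omega}a_i^{n_{f(k)}}$ would force $\theta(x)\in\theta(\mathbb F_2)'=\{1_\omega\}$, contradicting condition~(i). What is genuinely different is the construction of $\sigma$ and the proof that its commutant is trivial. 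The paper is elementary and combinatorial: $a_1^n$ is the canonical $n$-cycle, $a_2^n$ is any $n$-cycle in the explicit set $K_n^\delta$; Propositions~\ref{P5.13} and~\ref{P:nr commuting} give upper bounds on the number of permutations almost commuting with an $n$-cycle (resp.\ with an arbitrary fixed $b$ far from $1_n$), a union bound (Propositions~\ref{P:Gndelta}, \ref{P:Hndelta}) shows almost all $n$-cycles lie in $K_n^\delta$, and this is combined with Proposition~\ref{T5.20} to get Proposition~\ref{f2rep}; so a ``generic'' pair $(a_1^n,a_2^n)$ works. You instead use an arithmetic model: a finite-index free $F\leqslant\SL_2(\Z)$ reduced mod $p$ acting on $\mathbb{P}^1(\F_p)$, with triviality of the commutant obtained from a uniform spectral gap (Selberg's theorem / property $(\tau)$) for the diagonal $F$-action on $\mathbb{P}^1(\F_p)^2\setminus\Delta\cong\SL_2(\F_p)/D_p$, translated via the graph-of-$u_k$/Cheeger dictionary. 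Both routes also deliver the corollary about extreme points of $Sof(\mathbb F_2,P^\omega)$ answering Question~2.14 of~\cite{Pa1} (the paper via the expander criterion of~\cite{Pa}, you directly via the spectral gap). The paper's method is self-contained and establishes that ``good'' choices of $a_2^n$ are generic among $n$-cycles; yours is conceptually more transparent once $(\tau)$ is granted, but requires real arithmetic input --- Selberg's $3/16$ bound, strong approximation to get $F\twoheadrightarrow\SL_2(\F_p)$, passage of $(\tau)$ to the finite-index free subgroup $F$ and its chosen free generating set, and the Cheeger normalisations --- all of which you correctly single out as the parts needing verification.
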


Here, $\mathbb F_2=\langle a_1, a_2\rangle$ is the `fixed subgroup' of $\mathbb F_2\times\Z$. Our strategy to prove Theorem~\ref{thm:ncs} is to show 
the existence of a suitable sofic representation of $\mathbb F_2$. Recall that a
\emph{sofic representation} of a group $G$ is 
a homomorphism $\theta\colon G\to \Pi_{k\to\omega}S_{n_k}$ with  $d_\omega(\theta(g),1_\omega)=1$ for every element $1_G\not=g\in G$.

\begin{te}\label{thm:trivialcommutant}
There exists a sofic representation of the free group $\mathbb F_2,$
$$\theta\colon\mathbb F_2\hookrightarrow\Pi_{k\to\omega}S_{n_k}$$ such that its commutant $\theta(\mathbb F_2)^\prime=\{p\in\Pi_{k\to\omega}S_{n_k}:p\theta(w)=\theta(w)p,\ \forall w\in\mathbb F_2\}$ is trivial, i.e. contains only the identity element $1_{\omega}$.
\end{te}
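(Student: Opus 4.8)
The plan is to build the sofic representation $\theta$ as an explicit sequence of permutation representations $\theta_k\colon \mathbb F_2\to S_{n_k}$ coming from finite quotients of $\mathbb F_2$, chosen so that (a) the sofic condition $d_\omega(\theta(w),1_\omega)=1$ holds for every $w\neq 1$, and (b) any element $(p_k)_{k\to\omega}$ commuting with all $\theta(w)$ is forced to be $1_\omega$. The natural candidate is to let $\mathbb F_2$ act on itself by left translation through finite quotients: fix a chain of finite-index normal subgroups $N_1\supseteq N_2\supseteq\cdots$ with $\bigcap_k N_k=\{1\}$ (residual finiteness of $\mathbb F_2$), set $n_k=[\mathbb F_2:N_k]$, and let $\theta_k$ be the left-regular representation of $\mathbb F_2/N_k$ on its own underlying set. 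Then $\theta_k(w)$ has no fixed point whenever $w\notin N_k$, so $d_H(\theta_k(w),1_{n_k})=1$; since $w\notin N_k$ for all large $k$, the ultralimit gives $d_\omega(\theta(w),1_\omega)=1$, proving $\theta$ is a sofic representation (in particular injective). The commutant of $\theta_k(\mathbb F_2/N_k)$ inside $S_{n_k}$ is exactly the \emph{right}-regular copy of $\mathbb F_2/N_k$, which is a group of order $n_k$; the issue is that this commutant is \emph{not} trivial at finite stages, so one must show it becomes ``invisible'' in the metric ultraproduct, i.e. that any sequence of permutations $p_k$ with $p_k$ in (or approximately in) the right-regular copy and $d_H(p_k,1_{n_k})\not\to_\omega 0$ cannot actually commute with all of $\theta(\mathbb F_2)$ in the ultraproduct.

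Here is where the argument needs a genuinely new idea rather than a naive regular representation, because for the plain left-regular representation the commutant in the ultraproduct is typically \emph{large} (it contains the right-regular ultraproduct). So the real construction must \emph{perturb} the representations across $k$: instead of a single coherent tower, take representations $\theta_k$ of $\mathbb F_2$ on $\{1,\dots,n_k\}$ that are ``locally'' close to a free action (enough Benjamini–Schramm-type convergence to the Cayley graph of $\mathbb F_2$ to guarantee soficity and asymptotic freeness) but whose \emph{global} combinatorial structure is rigid, e.g. obtained from a sequence of finite quotients $\mathbb F_2\twoheadrightarrow Q_k$ where $Q_k$ has trivial centralizer structure in a strong sense, or from suitably randomised/twisted Schreier graphs. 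The key quantitative statement to prove is: if $(p_k)$ represents a nontrivial element of $\Pi_{k\to\omega}S_{n_k}$, then there is $\eta>0$ so that for $\omega$-many $k$ the set $A_k=\{i: p_k(i)\neq i\}$ has $|A_k|/n_k\geq \eta$, and on a definite fraction of $A_k$ one can find a word $w$ (of bounded length, using asymptotic freeness of $\theta_k$) with $\theta_k(w)p_k(i)\neq p_k\theta_k(w)(i)$; averaging over a bounded family of words and points then yields $d_\omega(p\theta(w),\theta(w)p)>0$ for some $w$, contradicting $p\in\theta(\mathbb F_2)'$. The combinatorial heart is a counting/expansion argument showing a nonidentity permutation cannot simultaneously commute (up to $o(n_k)$ error) with a set of near-free permutations generating a copy of $\mathbb F_2$ on a Schreier graph with enough tree-like local geometry.

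Concretely, I would organise the write-up as follows. Step 1: define the spaces $\{1,\dots,n_k\}$ and the permutations $\theta_k(a_1),\theta_k(a_2)$ via the chosen finite structures, and verify asymptotic freeness: for every reduced word $w\in\mathbb F_2$, $d_H(\theta_k(w),1_{n_k})\to 1$, hence $\theta:=\Pi_{k\to\omega}\theta_k$ is a sofic representation of $\mathbb F_2$. Step 2 (the main step): take $p=(p_k)_{k\to\omega}$ with $p\theta(w)=\theta(w)p$ for all $w$, i.e. $d_H(p_k\theta_k(w),\theta_k(w)p_k)\to_\omega 0$ for each fixed $w$, and show $d_\omega(p,1_\omega)=0$. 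The mechanism: on the part of $\{1,\dots,n_k\}$ where the Schreier graph looks like a ball in the $4$-regular tree, the commutation relations with generators force $p_k$ to send a geodesic emanating from $i$ to a geodesic emanating from $p_k(i)$ with matching edge-labels, i.e. $p_k$ restricted to these tree-like regions is a ``label-preserving graph automorphism'', and on a tree the only label-preserving automorphism fixing the pattern of a large ball is close to the identity; a careful inclusion–exclusion (each failed commutation relation is witnessed on an $o(1)$ fraction, there are boundedly many relations used) bounds the measure of points where $p_k$ moves $i$ by $o(n_k)$. Step 3: conclude the commutant is $\{1_\omega\}$. I expect Step 2 to be the main obstacle: one must make ``tree-like neighbourhoods force rigidity'' quantitative and uniform, handling the boundary of balls and the small exceptional set where the Schreier graph has short cycles, and one must choose the $\theta_k$ so that these short cycles are both sparse enough for soficity and structured enough to kill the right-regular-type commutant that plagues the naive construction.
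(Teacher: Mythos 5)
You correctly identified the central obstacle — that the naive regular-representation construction has a large commutant (the right-regular copy) — but the mechanism you propose to overcome it does not actually work, and the proposal stops short of a proof exactly where the real content must go.

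The rigidity claim at the heart of your Step 2 is false as stated. On a ball in the $4$-regular labelled tree (i.e. the Cayley graph of $\mathbb F_2$), a label-preserving automorphism is \emph{not} forced to be close to the identity: the full group of label-preserving automorphisms of the labelled Cayley graph of $\mathbb F_2$ is $\mathbb F_2$ itself, acting by right translation, and each such automorphism moves every vertex. So ``tree-like local geometry'' alone cannot kill the commutant — it produces precisely the right-regular-type obstruction you identified in the first paragraph, and Benjamini--Schramm convergence to the tree gives you nothing beyond soficity. A Schreier graph with sparse short cycles does not pin down the combinatorics of a near-commuting permutation to $o(1)$; you would need to exploit the \emph{global} cycle structure of the generators, which is exactly what your outline defers with phrases like ``structured enough to kill the right-regular-type commutant.'' Step 2 is not merely ``the main obstacle,'' it is an unsupplied and, in the form sketched, incorrect argument.

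The paper's actual route is different in kind. Both generators are taken to be $n$-cycles: $\pi(a_1)=a$ is the canonical $n$-cycle, and $\pi(a_2)=c$ is chosen from a set $K_n^\delta$ of $n$-cycles with the quantitative property that every $b\in S_n$ satisfies $d_H(b,1_n)\leq 8\max\{d_H(ab,ba),d_H(bc,cb),\delta\}$. The existence of many such $c$ is proved by a counting argument: Proposition~\ref{P5.13} bounds the number of permutations almost commuting with the $n$-cycle $a$, Proposition~\ref{P:nr commuting} bounds the number of $n$-cycles commuting (up to $\ve$) with a fixed permutation $b$ that is far from the identity, and Propositions~\ref{P:Gndelta}--\ref{P:Hndelta} combine these over a geometric sequence of scales. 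Proposition~\ref{f2rep} then intersects $K_n^\delta$ with the near-free $n$-cycles of~\cite[Theorem 5.20]{Pa} to get a sofic pair. In the ultraproduct the defining inequality of $K_{n_k}^{\delta_k}$ immediately forces the commutant to be trivial. This is a probabilistic rigidity of a \emph{pair of $n$-cycles}, not a geometric rigidity of tree-like Schreier graphs; the single-cycle structure of both generators is what makes the centraliser small, and the randomness in choosing $c$ is what defeats the right-regular commutant. Your proposal does not reach an argument of this kind, so there is a genuine gap.
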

 
In order to construct such a sofic representation we use
two results from \cite{Pa}. We temporarily fix an integer $n>0$, the degree of the symmetric group $S_n$. Denote by $a\in S_n$ the $n$-cycle with $a(i)=i+1$ and $a(n)=1$.

\begin{p}\cite[Proposition 5.13]{Pa}\label{P5.13}
Let $\ve>0$. The number of permutations $y\in S_n$ such that $d_{H}(ay,ya)<\ve$ is less than $n^{\lfloor n\ve\rfloor+1}$.
\end{p}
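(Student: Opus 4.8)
\textbf{Proof proposal for Proposition~\ref{P5.13}.}

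The plan is to set up a counting argument that exploits the cyclic structure of $a$ to show that the condition $d_H(ay, ya) < \ve$ forces $y$ to be almost entirely determined by very few of its values. Rewrite the commutation condition: $d_H(ay, ya) < \ve$ means $ay(i) = ya(i)$ for all but fewer than $n\ve$ indices $i$, i.e. $y(i+1) = y(i) + 1$ (indices mod $n$, identifying $\{1,\dots,n\}$ with $\Z/n\Z$ via the action of $a$) for all $i$ outside a "bad set" $B$ of size $|B| < n\ve$, hence $|B| \leqslant \lfloor n\ve \rfloor$. The key observation is that on each maximal run of consecutive indices $i, i+1, \dots, i+t$ all lying outside $B$, the recursion $y(j+1) = y(j)+1$ propagates, so the value $y(i)$ at the start of the run determines $y$ on the entire run. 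Thus $y$ is completely determined by its values at the "starting points" of these runs together with the combinatorial data of where $B$ sits.

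First I would make the index bookkeeping precise. Partition $\Z/n\Z$ into the maximal arcs of consecutive indices on which the recursion holds; the number of such arcs is at most $|B| \leqslant \lfloor n\ve\rfloor$ (each element of $B$ "breaks" at most one arc, and if $B = \es$ there is a single arc and $y$ is then forced to be a power of $a$, giving at most $n$ choices). On each arc, $y$ is pinned down by a single value in $\{1, \dots, n\}$, so there are at most $n$ choices per arc. Next I would count the choices for the location of $B$: it is a subset of $\Z/n\Z$ of size at most $\lfloor n\ve\rfloor$, and the behaviour of $y$ on $B$ itself is also free, adding at most a further $n^{|B|}$ choices. Combining, the number of admissible $y$ is bounded by (number of arcs choices) $\times$ ($n$ per arc) $\times$ (placements and values on $B$), and the dominant contribution is of the form $n^{\lfloor n\ve\rfloor + 1}$ after absorbing the polynomial-in-$n$ combinatorial factors (such as $\binom{n}{|B|}$) into one extra power of $n$; I would verify that the crude bound $n \cdot n^{\lfloor n\ve\rfloor}$ indeed dominates, perhaps handling small $n$ separately if needed.

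The main obstacle I anticipate is getting the exponent exactly right rather than off by a constant or an additive term: naively one might bound the number of arcs, the placement of $B$, and the free values on $B$ each by something like $n^{|B|}$, yielding an exponent closer to $3\lfloor n\ve\rfloor$. The point of the argument must be that once $B$ is fixed, the arcs are determined, and $y$ restricted to the complement of $B$ costs only one factor of $n$ per arc while the complement has at most $|B|$ arcs — so the honest count is roughly $n^{|B|}$ (placements of $B$) $\times n^{|B|}$ (values on $B$) $\times n^{|B|}$ (one value per arc), and the claimed bound $n^{\lfloor n\ve \rfloor + 1}$ shows the authors have a sharper grouping in mind. I would therefore look for the refinement that collapses these into essentially a single $n^{|B|+1}$: likely, specifying $y$ on $B$ already encodes the arc-starting values implicitly (the value just after a break point determines the next arc), so only the placement of $B$ (a polynomial factor, absorbed into one extra $n$) and the values $y|_B$ (which determine everything else) are genuinely free. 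Making that dependency explicit is the crux of the proof.
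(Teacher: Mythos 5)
Your translation of the hypothesis is right: $d_H(ay,ya)<\ve$ forces $y(i+1)=y(i)+1$ (indices mod $n$, using $a(i)=i+1$) outside a bad set $B$ with $|B|\leqslant\lfloor n\ve\rfloor$, and on each maximal good arc the recursion pins $y$ down from one starting value. You also correctly sense that a naive three-way product overcounts. But the proposed fix does not work, and the gap is not a technicality.

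The fatal step is the assertion that the placement of $B$ contributes only ``a polynomial-in-$n$ combinatorial factor (such as $\binom{n}{|B|}$)'' that can be ``absorbed into one extra power of $n$.'' Here $|B|$ can be as large as $\lfloor n\ve\rfloor$, which is proportional to $n$, so $\binom{n}{|B|}$ is exponential in $n$ (of order $2^{nH(\ve)}$, not $O(n^C)$ for fixed $C$), and certainly not $\leqslant n$. Thus even the leanest version of your count, $\binom{n}{|B|}\cdot n^{|B|}$ (placement of $B$, then one value per arc), already exceeds the target $n^{\lfloor n\ve\rfloor+1}$ by the factor $\binom{n}{|B|}/n$, which blows up. The extra factor $n^{|B|}$ you allot for ``the behaviour of $y$ on $B$ itself'' is also spurious (for $i\in B$, the value $y(i)$ is the last entry of the preceding arc, hence already determined by that arc's starting value), but even deleting it does not save the argument: the $\binom{n}{|B|}$ is the real obstruction, and no rearrangement of your three factors removes it.

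What is missing is a mechanism that makes the placement of $B$ \emph{free}, i.e.\ recoverable from data you are already charging for, and this is where the bijectivity of $y$ must enter. Writing $i_1<\dots<i_b$ for the bad indices and $s_j=y(i_j+1)$ for the arc-starting values, each good arc of length $l_j$ is sent by $y$ onto the interval $\{s_j,s_j+1,\dots,s_j+l_j-1\}\subset\Z/n\Z$. Because $y$ is a bijection, these $b$ intervals are pairwise disjoint and cover $\Z/n\Z$; therefore each $l_j$ equals the cyclic gap from $s_j$ to the next smallest element of $\{s_1,\dots,s_b\}$, so the tuple $(s_1,\dots,s_b)$ alone determines all arc lengths, and together with a single reference position (say $i_1$) it determines $B$ and hence all of $y$. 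This injective encoding $y\mapsto(i_1,s_1,\dots,s_b)\in\{1,\dots,n\}^{b+1}$ is what produces the bound $n^{\lfloor n\ve\rfloor+1}$; the binomial factor never appears because $B$ is reconstructed, not chosen. Your write-up gestures at this (``the value just after a break point determines the next arc'') but never identifies bijectivity as the reason the arc lengths are forced, and without that the exponent cannot be brought down to $\lfloor n\ve\rfloor+1$.

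(For completeness: the paper itself does not prove this proposition; it is quoted as Proposition~5.13 of~\cite{Pa}.)
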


\begin{p}\cite[Theorem 5.20]{Pa}\label{T5.20}
For any $\ve>0$ and $w\in\mathbb F_2$, there exists $n_0$ such that for any $n>n_0$ for at least $(1-\ve)[(n-1)!]$ $n$-cycles $c\in S_n$ we have 
$d_H\left(w(a,c), 1_n\right)>1-\ve$.
\end{p}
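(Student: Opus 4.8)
The plan is to estimate the average number of fixed points of $w(a,c)$ over uniformly random $n$-cycles $c\in S_n$ (there are $(n-1)!$ of them) and then conclude by Markov's inequality. Writing $\#\mathrm{Fix}(\sigma)=\#\{i:\sigma(i)=i\}$, so that $d_H(\sigma,1_n)=1-\tfrac1n\#\mathrm{Fix}(\sigma)$, it suffices to prove $\mathbb E_c[\#\mathrm{Fix}(w(a,c))]\leq C(w)$ for all large $n$, with $C(w)$ depending only on $w$.

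First I would put $w$ into a convenient form. Since $\#\mathrm{Fix}(w(a,c))$ is a conjugacy invariant and every element of $\mathbb F_2$ is conjugate to a cyclically reduced word, we may assume $w$ is cyclically reduced and nontrivial (the statement being vacuous if $w=1$). If $w$ is a nonzero power of a single generator, then for $n>|w|$ we have $w(a,c)\in\{a^k,c^k\}$ with $0\neq|k|<n$; being a nontrivial power of an $n$-cycle it is fixed-point free, so $d_H(w(a,c),1_n)=1>1-\ve$ for every $n$-cycle $c$ and we are done. Otherwise $w$ has the form $\bar x_1^{e_0}\bar x_2^{f_1}\bar x_1^{e_1}\cdots\bar x_2^{f_r}\bar x_1^{e_r}$ with $r\geq1$, each $f_j\neq0$, and $e_1,\dots,e_{r-1}\neq0$; in particular $w$ genuinely involves $\bar x_2$. (We shall use only that $a$ is an $n$-cycle, so that $a^j$ is fixed-point free for $0<|j|<n$.)

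Next, by linearity of expectation $\mathbb E_c[\#\mathrm{Fix}(w(a,c))]=\sum_{i=1}^n\Pr_c[w(a,c)(i)=i]$, so the core estimate is $\Pr_c[w(a,c)(i)=i]\leq C(w)/n$ for every $i$. I would prove it by a principle-of-deferred-decisions argument of the kind used to show that free groups are sofic. Trace the orbit $i=P_0,P_1,\dots,P_L=w(a,c)(i)$, revealing a successor link of $c$ only when a letter $c^{\pm1}$ is applied at a point whose link is not yet known; at most $|w|$ links are ever revealed, and each newly revealed link takes a value uniform over the still-admissible set, whose complement has size at most $2|w|$. Reading $w$ from the right, the $\bar x_1$-blocks act by the fixed shifts $p\mapsto p+e$ of $\mathbb Z/n\mathbb Z$ and carry no randomness, while the rightmost block $\bar x_2^{f_r}$, applied to $i+e_r$ before any link has been revealed, forces at least one fresh link. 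Summing over the index, among the $c$-queries of the trace, of the last one that reveals a fresh link, and conditioning on the trace up to just before that query, the equation $w(a,c)(i)=i$ becomes the requirement that the uniform value of that last fresh link equal $i$ after post-composition with fixed shifts and already-determined links. Since the interior $\bar x_1$-blocks are nontrivial, the trace cannot immediately undo the link just revealed, and any later return to that link's value would force an injective function of it to equal it; hence this post-composition is injective outside an exceptional set of size $O_w(1)$, so the uniform value is pinned to $O_w(1)$ choices out of at least $n-2|w|$, giving $\Pr_c[w(a,c)(i)=i]\leq C(w)/n$.

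Finally, $\mathbb E_c[\#\mathrm{Fix}(w(a,c))]\leq C(w)$, so Markov's inequality gives $\Pr_c[\#\mathrm{Fix}(w(a,c))\geq\ve n]\leq C(w)/(\ve n)$, which is $<\ve$ once $n>n_0:=\max(|w|,C(w)/\ve^2)$; hence for $n>n_0$ at least $(1-\ve)(n-1)!$ of the $n$-cycles $c$ satisfy $\#\mathrm{Fix}(w(a,c))<\ve n$, equivalently $d_H(w(a,c),1_n)>1-\ve$. The main obstacle is the bookkeeping in the deferred-decisions step: making rigorous the conditioning on the last fresh link (handled by the sum over its index in the list of $c$-queries) and verifying that cyclic reducedness of $w$ prevents the relevant post-composition from collapsing, so that cancellations do not destroy the $1/n$ gain. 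For special $w$ this can be bypassed: e.g.\ for the basic commutator $w=\bar x_1\bar x_2\bar x_1^{-1}\bar x_2^{-1}$ one has $d_H(w(a,c),1_n)=d_H(ac,ca)$, and Proposition~\ref{P5.13} (applied with $1-\ve$ in place of $\ve$) already bounds the number of such $c$ by $n^{\lfloor n(1-\ve)\rfloor+1}=o((n-1)!)$.
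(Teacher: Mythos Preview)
The paper does not give a proof of this proposition: it is quoted verbatim as \cite[Theorem~5.20]{Pa} and used as a black box in the sequel. There is therefore no in-paper argument to compare your attempt against; a genuine comparison would require consulting the proof in \cite{Pa}.

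That said, your strategy --- bound $\mathbb E_c[\#\mathrm{Fix}(w(a,c))]$ over a uniformly random $n$-cycle $c$ and apply Markov --- is a sound and standard route to statements of this type, and the outline you give can be completed. Two points deserve care. First, the ``principle of deferred decisions'' for a uniformly random $n$-cycle is slightly different from the random-permutation case: a freshly revealed link $c(x)$ is uniform over the set of elements not yet in the range \emph{and} distinct from the start of the path containing $x$ (to avoid closing a short cycle), so the admissible set has size at least $n-|w|-1$; one should check that this conditional uniformity genuinely holds, which it does by a symmetry/bijection argument on completions. Second, in your ``last fresh link'' step the key fact is that, conditioning on the history up to that reveal, the remaining portion of the trace is a \emph{partial injection} (each step being the restriction of a bijection: either the shift $a^{\pm1}$ or an already-revealed link of $c^{\pm1}$), so at most one value of the fresh link can land the trace at $i$; this is what makes the $1/(n-O(|w|))$ bound go through, and summing over the at most $|w|$ possible positions of the last fresh link gives $\Pr_c[w(a,c)(i)=i]\leq C(w)/n$. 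Your own caveat about the bookkeeping is well placed, but there is no missing idea.
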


Proposition \ref{P5.13} provides an estimate for the number of permutations almost commuting with an $n$-cycle. However, for our construction, we also need an estimate for the number of permutations commuting with an arbitrary element.

\begin{p}\label{P:nr commuting}
Let $b\in S_n$ be such that $d_H(b, 1_n)>4\ve$. The number of permutations $c\in S_n$ such that $d_H(bc,cb)<\ve$ is less than $\frac{n!}{n^{n\ve+3}}$, for large enough $n$.
\end{p}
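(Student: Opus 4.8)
The plan is to bound the number of permutations $c \in S_n$ with $d_H(bc, cb) < \ve$ by first reducing to the case where $b$ has a single non-trivial cycle, then counting directly. First I would recall the standard fact that if $c$ commutes with $b$, then $c$ permutes the cycles of $b$ of each fixed length among themselves; more precisely, the centralizer of $b$ in $S_n$ is a direct product of wreath products $\prod_j (\Z/j\Z) \wr S_{m_j}$, where $m_j$ is the number of $j$-cycles of $b$. The hypothesis $d_H(b,1_n) > 4\ve$ means $b$ moves more than $4n\ve$ points. The rough idea is that for $c$ to almost commute with $b$, the permutation $c$ must be ``close'' to honoring this cycle structure on a large portion of the support of $b$, and there are relatively few such $c$.

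The key steps, in order, would be: (1) Fix $i$ in the support of $b$, so $b(i) \neq i$. If $bc(i) = cb(i)$ (which holds for all but at most $n\ve$ values of $i$ by the almost-commutation hypothesis), then $c(b(i)) = b(c(i))$, i.e. $c$ conjugates the $b$-edge $i \to b(i)$ to the $b$-edge $c(i) \to c(b(i))$. (2) Consequently, once we know $c(i)$ for one point $i$ of a given $b$-cycle, the values of $c$ on the ``good'' part of that cycle are determined by following the cycle: $c(b^s(i)) = b^s(c(i))$ as long as we stay among the $\le n\ve$ exceptional points. So the information needed to pin down $c$ on the support of $b$, up to an error set of size $O(n\ve)$, is: for each $b$-cycle, the value of $c$ on one representative point (at most $n$ choices each), plus a free choice of $c$ on the error set and on the fixed points of $b$. (3) Counting: the support of $b$ has $> 4n\ve$ points and decomposes into at most $2n\ve$ cycles (since each non-trivial cycle has length $\ge 2$), so there are at most $2n\ve$ ``representative choices'', contributing at most $n^{2n\ve}$; the error set together with $\Fix(b)$ has size at most $n - 4n\ve + n\ve = n - 3n\ve$ (roughly), contributing at most $(n - 3n\ve)! \le (n-3n\ve)!$ to the count. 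Multiplying and using crude bounds like $n^{2n\ve} \le n^{2n\ve}$ and $(n-3n\ve)! \cdot n^{3n\ve} \ll n!$ for large $n$ should give the claimed bound $n!/n^{n\ve+3}$ with room to spare.

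The main obstacle I anticipate is making the bookkeeping in step (2)–(3) precise, since the ``exceptional'' set of indices $i$ where $bc(i) \neq cb(i)$ interacts awkwardly with the cycle decomposition: a cycle of $b$ may be broken into several good arcs by exceptional points, and on each arc $c$ is determined only once we know $c$ at the arc's initial point. One clean way to handle this is to define a directed graph on $\{1,\dots,n\}$ with an edge $i \to b(i)$ whenever $bc(i) = cb(i)$, observe $c$ is a graph isomorphism from this graph to its $c$-image, and count by choosing the image of one vertex per weakly-connected component; the number of components is at most $(\text{number of }b\text{-cycles}) + (\text{number of exceptional }i) \le 2n\ve + n\ve = 3n\ve$. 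Then $c$ is determined by these $\le 3n\ve$ choices (each at most $n$) together with its restriction to the complement of the support of the graph, which has size $\le n - 4n\ve + \text{(exceptional points already counted)}$; a careful accounting gives a bound of the form $n^{3n\ve} \cdot (n - n\ve)!$, and Stirling (or the elementary estimate $(n-n\ve)! \le n!/n^{n\ve}$ refined to absorb the extra polynomial and the ``$+3$'') yields $n!/n^{n\ve+3}$ for $n$ large. I would double-check the constant $4$ in the hypothesis is exactly what is needed to make these inequalities close; it plausibly provides the slack for the ``$+3$'' in the exponent and for the transition from $d_H$ counting to the factorial estimate.
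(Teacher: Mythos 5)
Your framework --- the directed graph with an edge $i \to b(i)$ whenever $bc(i)=cb(i)$, with $c$ determined on each weakly-connected component by its value at a single vertex --- is the right one and is in spirit the same as the paper's counting argument. However, the component count is wrong. The hypothesis $d_H(b,1_n)>4\ve$ gives a \emph{lower} bound $|B| > 4n\ve$ on the support $B$ of $b$, so ``the support decomposes into at most $2n\ve$ cycles'' does not follow: $b$ could be a product of nearly $n/2$ transpositions. More seriously, you omit the fixed points of $b$ from the component tally: each is a singleton component of your graph, and there are $n-|B|$ of them, potentially close to $n$. So the number of weakly-connected components is typically close to $n$, not $O(n\ve)$, and the bound $n^{2n\ve}$ (or $n^{3n\ve}$) on the representative choices is unavailable.

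The concluding inequality is also backwards: from $n!/(n-n\ve)! = n(n-1)\cdots(n-n\ve+1) \leqslant n^{n\ve}$ one gets $(n-n\ve)! \geqslant n!/n^{n\ve}$, not $\leqslant$. Consequently your proposed bound $n^{2n\ve}(n-3n\ve)!$ is at least $n!/n^{n\ve}$, which \emph{exceeds} the target $n!/n^{n\ve+3}$ rather than falling below it; there is no ``room to spare''. Making the argument close requires a sharper lower bound on the savings, i.e.\ on $n$ minus the number of components: writing $\delta=4\ve$, one needs this to exceed $(\delta-\ve)n/2 = 3n\ve/2$, and a bound of only $n\ve$ does not suffice for the stated $n!/n^{n\ve+3}$. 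The paper obtains the sharper bound by enumerating the points of $B$ one by one, with $t=\lfloor\ve n\rfloor$ single-step (exceptional) choices followed by $s=\lfloor(\delta-\ve)n/2\rfloor$ double-step choices (each of which also forces $c(b(i))$), and then verifying via a L'Hopital estimate that the resulting saving factor $[(1-\delta)n]^{(\delta-\ve)n/2-1}$ exceeds $n^{n\ve+3}$ for large $n$.
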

\begin{proof}
Let $\delta=4\ve$ and define $C=\{c\in S_n: d_H(bc,cb)<\ve\}$. Choose $c\in C$. Consider the following subsets of $\{1,\ldots,n\}$: $A_c=\{i:bc(i)=cb(i)\}$ and $B=\{i:b(i)\neq i\}$. Then $|A_c|>(1-\ve)n$ and $|B|>\delta n$. It follows that $|A_c\cap B|>(\delta-\ve)\cdot n$.

Let $i\in A_c\cap B$. Then $c\big(b(i)\big)=bc(i)$ and $b(i)\neq i$. Hence, once the value of $c(i)$ is fixed, the value of $c$ on $b(i)$ must be $bc(i)$. Unfortunately, the set $A_c$ depends on $c$. This makes the counting argument a little more involved.

Lets recall how to count the number of permutations $p\in S_n$: $p(1)$ can take any of the $n$ values in the set $\{1,\ldots,n\}$; $p(2)$ can take any of the remaining $n-1$ values, and so on. Hence, the cardinality of $S_n$ is $n!$. We adapt this argument to count the number of permutations $c$ with the required properties. Without loss of generality, we can assume that $B=\{1,2,\ldots,|B|\}$. As before $c(1)$ can take $n$ values. If $1\in A_c$, a information that at the moment we don't have, than $c(b(1))$ is also set. Thus, the following value of $c$ to be decided ($c(2)$ if $b(2)\neq 2$, and $c(3)$ otherwise), has only $n-2$ options. If $1\notin A_c$, we continue our enumeration of elements in $B$ till $|B|$. In the worst scenario, the first $\ve\cdot n$ elements of $B$ will not be in $A_c$. After this, all remaining elements of $B$ are bound to also be in $A_c$.

Thus, denoting by $t=\lfloor\ve n\rfloor$ and $s=\lfloor (\delta-\ve)n/2\rfloor$, our estimation for the maximal number of elements in $C$ is: $$\underbrace{n(n-1)\ldots(n-t+1)}_\text{$t$ terms}\underbrace{(n-t)(n-t-2)\ldots(n-t-2s+2)}_\text{$s$ terms}(n-t-2s)(n-t-2s-1)\ldots1.$$ Hence:
\[|C|<\frac{n!}{(n-t-2s+1)^s}<\frac{n!}{[(1-\delta)n]^{(\delta-\ve)n/2-1}}.\]
We only need to show that $[(1-\delta)n]^{(\delta-\ve)n/2-1}>n^{n\ve+3}$. Using the logarithm, this is equivalent to:
\[((\delta-\ve)n/2-1)\ln[(1-\delta)n]>(n\ve+3)\ln{n}.\]
We factor the two terms and compute the limit via L'Hospital's rule.
\begin{align*}
&\lim_{n\to\infty}\frac{((\delta-\ve)n/2-1)\ln[(1-\delta)n]}{(n\ve+3)\ln{n}}=\lim_n\frac{((\delta-\ve)/2)\ln[(1-\delta)n]+((\delta-\ve)n/2-1)\cdot1/n}{\ve \ln{n}+(n\ve+3)\cdot 1/n}=\\
&\lim_n\frac{((\delta-\ve)/2)\ln[(1-\delta)n]}{\ve \ln{n}}=\frac{((\delta-\ve)/2)}{\ve}=\frac{4\ve-\ve}{2\ve}=\frac32>1.
\end{align*}
\end{proof}

We continue our counting argument by introducing two sets of $n$-cycles with specific properties. Given  $\delta >0$, we define:
\[L_n^\delta=\{c\in S_n,c\mbox{ is an $n$-cycle} :\not\exists b\in S_n\mbox{ with } d_H(b,1_n)>4\delta, d_H(ab,ba)<\delta\mbox{ and }d_H(cb,bc)<\delta\}.\]

\begin{p}\label{P:Gndelta}
For a fixed $\delta>0$ and large enough $n\in\nz$, $$Card\ L_n^\delta>(1-n^{-1})[(n-1)!].$$ 
\end{p}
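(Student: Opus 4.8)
The plan is to count the complement: an $n$-cycle $c$ fails to lie in $L_n^\delta$ precisely when there exists some $b\in S_n$ with $d_H(b,1_n)>4\delta$, $d_H(ab,ba)<\delta$ and $d_H(cb,bc)<\delta$. So I would fix such a witness $b$ first, and bound the number of $n$-cycles $c$ that can be paired with it, then multiply by the number of admissible $b$'s and show the product is small compared to $(n-1)!$.

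First I would record the two relevant counting inputs. By Proposition~\ref{P5.13}, the number of permutations $b\in S_n$ with $d_H(ab,ba)<\delta$ is at most $n^{\lfloor n\delta\rfloor+1}\leqslant n^{n\delta+1}$; call this set $Y_n^\delta$. For each fixed $b\in Y_n^\delta$ that additionally satisfies $d_H(b,1_n)>4\delta$, Proposition~\ref{P:nr commuting} gives that the number of permutations $c\in S_n$ with $d_H(cb,bc)<\delta$ is at most $n!/n^{n\delta+3}$, hence in particular the number of such $c$ that happen to be $n$-cycles is also at most $n!/n^{n\delta+3}$. Taking the union over all $b\in Y_n^\delta$ with $d_H(b,1_n)>4\delta$, the number of $n$-cycles $c$ that are \emph{not} in $L_n^\delta$ is at most
\[
n^{n\delta+1}\cdot\frac{n!}{n^{n\delta+3}}=\frac{n!}{n^{2}}.
\]

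Then I would convert this into the stated lower bound. Since $n!=n\cdot(n-1)!$, the bound above is $\frac{(n-1)!}{n}$, so
\[
Card\ L_n^\delta\geqslant (n-1)!-\frac{(n-1)!}{n}=\Big(1-\frac1n\Big)(n-1)!,
\]
which is exactly the claimed inequality (strict for the stated range once one is slightly careful about floors, since $n^{\lfloor n\delta\rfloor+1}$ is a bit smaller than $n^{n\delta+1}$ and Proposition~\ref{P:nr commuting} already gives a strict inequality). The phrase ``for large enough $n$'' is inherited from Proposition~\ref{P:nr commuting}, which needed $n$ large (the L'Hospital estimate), and from the need for the floors in $t,s$ there to behave; nothing further is required here.

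\textbf{Main obstacle.} The genuine work has already been done in Proposition~\ref{P:nr commuting}; the present statement is essentially bookkeeping. The one subtlety to handle with care is that the definition of $L_n^\delta$ quantifies over $b$ with $d_H(b,1_n)>4\delta$, which is exactly the hypothesis needed to invoke Proposition~\ref{P:nr commuting}, so no $b$ close to the identity ever needs to be considered — the two propositions dovetail precisely. I would just make sure the exponents line up: Proposition~\ref{P5.13} contributes $n^{n\delta+1}$ and Proposition~\ref{P:nr commuting} contributes the denominator $n^{n\delta+3}$, and the surplus $n^{2}$ in the denominator is what produces the factor $n^{-1}$ (with room to spare).
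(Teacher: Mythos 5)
Your proof is correct and follows essentially the same route as the paper: count the complement by combining Proposition~\ref{P5.13} (at most $n^{n\delta+1}$ admissible $b$'s) with Proposition~\ref{P:nr commuting} (at most $n!/n^{n\delta+3}$ permutations $c$ almost-commuting with each such $b$), multiply to get $n!/n^2 = n^{-1}(n-1)!$, and subtract from the total $(n-1)!$ of $n$-cycles. The paper's proof is exactly this union-bound argument, so no further comparison is needed.
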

\begin{proof}
According to Proposition \ref{P5.13} there are at most $n^{n\delta+1}$ permutations $b\in S_n$ such that $d_H(ab,ba)<\delta$. By Proposition \ref{P:nr commuting}, for each of those permutations $b$ with $d_H(b,1_n)>4\delta$, there are at most $n!\cdot n^{-n\delta-3}$ cycles $c$ such that $d_H(cb,bc)<\delta$. All in all, the complement of $L_n^\delta$ has a cardinality less than $n^{n\delta+1}\cdot n!\cdot n^{-n\delta-3}=n^{-1}(n-1)!$.  The conclusion hence follows.
\end{proof}

The set $L_n^\delta$ cannot be used directly to construct the required sofic representation. This is because $d_H(b,1_n)$ is in some sense a moving target, while in the definition of $L_n^\delta$ it is supposed to be fixed. This is why we introduce the following set:
\[K_n^\delta=\{c\in S_n,c\mbox{ is an $n$-cycle }:\forall b\in S_n,\ d_H(b,1_n)\leqslant 8\cdot max\{d_H(ab,ba),d_H(bc,cb),\delta\}.\}\]

\begin{p}\label{P:Hndelta}
For a fixed $1>\delta>0$ and a large enough $n\in\nz$, $$Card\ K_n^\delta>(1-\delta)[(n-1)!].$$ 
\end{p}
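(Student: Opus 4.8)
The plan is to relate the set $K_n^\delta$ to the sets $L_n^{\delta'}$ for appropriately chosen parameters $\delta'$, so that Proposition~\ref{P:Gndelta} can be invoked. The key observation is that membership in $K_n^\delta$ fails at an $n$-cycle $c$ precisely when there exists some $b\in S_n$ with $d_H(b,1_n) > 8\max\{d_H(ab,ba), d_H(bc,cb), \delta\}$. First I would fix such a ``bad'' witness $b$ and set $\eta = \max\{d_H(ab,ba), d_H(bc,cb), \delta\}$, so $d_H(b,1_n) > 8\eta \geqslant 8\delta > 4\eta$ and both $d_H(ab,ba) \leqslant \eta$ and $d_H(bc,cb) \leqslant \eta$. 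The point is that $\eta$ takes only finitely many values of the form $j/n$ with $\delta \leqslant j/n \leqslant 1/8$ (since $d_H(b,1_n)\leqslant 1$ forces $8\eta \leqslant 1$, roughly), and for each such value $\eta$, the existence of a bad witness $b$ at parameter $\eta$ means exactly that $c \notin L_n^{\eta'}$ for $\eta'$ slightly above $\eta$ — we need strict inequalities $d_H(ab,ba) < \eta'$ etc., which is why one passes to $\eta' = \eta + 1/n$ or treats the boundary carefully.

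Next I would make this precise by taking a union bound: $S_n \setminus K_n^\delta \subseteq \bigcup_{j} (S_n \setminus L_n^{\delta_j})$ where $\delta_j$ ranges over a finite set (about $\lceil 1/(8\delta)\rceil$ values) of thresholds between $\delta$ and $1/8$, chosen as multiples of $1/n$ shifted by $1/n$ so that the strict/non-strict inequality issue is handled. By Proposition~\ref{P:Gndelta}, for each such $\delta_j$ and $n$ large enough, $\mathrm{Card}(S_n \setminus L_n^{\delta_j}) < n^{-1}(n-1)!$. Hence $\mathrm{Card}(S_n \setminus K_n^\delta) < N \cdot n^{-1}(n-1)!$ where $N$ is the number of thresholds, a constant depending only on $\delta$. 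Since $N/n \to 0$, for $n$ large enough we get $N \cdot n^{-1}(n-1)! < \delta (n-1)!$ — in fact even $< \delta (n-1)!$ with room to spare — which gives $\mathrm{Card}(K_n^\delta) > (1-\delta)[(n-1)!]$ as claimed. One should double-check that the count being restricted to $n$-cycles is consistent on both sides: $L_n^\delta$ and $K_n^\delta$ both consist of $n$-cycles, so the inclusion $S_n \setminus K_n^\delta \supseteq$ (complement within $n$-cycles) is the right containment, and $(n-1)!$ is the total number of $n$-cycles.

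The main obstacle I anticipate is the bookkeeping around the boundary values: in $K_n^\delta$ the inequality $d_H(b,1_n) \leqslant 8\max\{\cdots\}$ is non-strict, whereas in $L_n^\delta$ the violating conditions $d_H(ab,ba) < \delta$, $d_H(cb,bc) < \delta$, $d_H(b,1_n) > 4\delta$ are a mix of strict and non-strict, so one must insert a $1/n$ buffer and verify that the finitely many shifted thresholds $\delta_j$ still all satisfy the hypothesis $\delta_j > 0$ fixed (independent of $n$) needed to apply Proposition~\ref{P:Gndelta} for $n$ large. Concretely, I would choose the $\delta_j$ to be a fixed finite collection in the open interval $(\delta/2, 1/8)$, say $\delta_j = \delta/2 + j\delta/2$ for $j$ up to roughly $1/(4\delta)$, arranged so that any $\eta \in [\delta, 1/8]$ of the form (integer)$/n$ lies below some $\delta_j$ by at least $1/n$; this is possible once $n > $ (some constant depending on $\delta$). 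The rest is the union bound and the elementary estimate $N \cdot n^{-1}(n-1)! < \delta(n-1)!$ for $n > N/\delta$, both routine.
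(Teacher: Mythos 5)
Your approach is essentially the paper's: both cover the complement of $K_n^\delta$ (among $n$-cycles) by the complements of $L_n^{\delta'}$ for a fixed finite family of thresholds $\delta'$ depending only on $\delta$, and then invoke Proposition~\ref{P:Gndelta} together with a union bound. The paper phrases this as the direct inclusion $K_n^\delta \supseteq L_n^\delta \cap L_n^{2\delta}\cap\cdots\cap L_n^{2^k\delta}$ with a dyadic ladder (which hits the geometric window $\eta < \delta' \leqslant 2\eta$ with only $O(\log(1/\delta))$ thresholds, a bit leaner than your arithmetic progression, and shows your $1/n$-buffer worry is unnecessary because the strict inequalities in the definition of $L_n^{\delta'}$ already mesh correctly with the strict inequality $d_H(b,1_n) > 8\eta$ defining a violation of $K_n^\delta$); these are cosmetic differences and your argument is sound.
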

\begin{proof}
The proof is almost over when we notice that $K_n^\delta\supseteq L_n^\delta\cap L_n^{2\delta}\cap\ldots\cap L_n^{2^k\delta}$, where $k$ is minimal with the property that $2^{k+2}\delta>1$. So let $c\in L_n^\delta\cap L_n^{2\delta}\cap\ldots\cap L_n^{2^k\delta}$ and take $b\in S_n$. Denote by 
 $\lambda=max\{d_H(ab,ba),d_H(bc,cb)\}$. If $\lambda<\delta$, then as $c\in L_n^\delta$, $d_H(b,1_n)\leqslant 4\delta$.

Assume that $\lambda\geqslant\delta$. Then, there exists $i>0$ such that $2^{i-1}\delta\leqslant\lambda<2^i\delta$. If $i\leqslant k$, then $c\in L_n^{2^i\delta}$, so $d_H(b,1_n)\leqslant 4\cdot2^i\delta\leqslant 8\lambda$. If $i>k$ then $8\lambda>1$. This proves $c\in K_n^\delta$.

By Proposition \ref{P:Gndelta} and using De Morgan's formula $\cap_{j=0}^{k}L_n^{2^j\delta}=\overline{\cup_{j=0}^{k}\overline{L_n^{2^j\delta}}},$ we obtain that
$$|L_n^\delta\cap L_n^{2\delta}\cap\ldots\cap L_n^{2^k\delta}|>(1-(k+1)n^{-1})[(n-1)!].$$ As $k$ is fixed, depending only on $\delta$, we can find $n$ such that $|K_n^\delta|>(1-\delta)[(n-1)!]$.
\end{proof}

Now we construct the required sofic representation of $\mathbb F_2$, so $n$ is no longer fixed. 
We denote  by $a^n_1\in S_n$ the canonical $n$-cycle $i\mapsto i+1$, $n\mapsto 1$. (The notation $a^n_1\in S_n$ reflects our use of $a_1^\alpha, \ldots, a_\ell^\alpha\in G_\alpha$ in a general setting.)

\begin{p}\label{f2rep}
For each $\delta>0$ there exist $n\in\nz^*$ and $\pi\colon\mathbb F_2=\langle a_1, a_2\rangle \to S_n$ such that:
\begin{enumerate}
\item[(1)] $d_H\left( w(\pi(a_1),\pi(a_2)), 1_n \right) >1-\delta$ for every $1_{\mathbb F_2}\not=w\in \mathbb F_2$ of length at most $1/\delta$, 

\item[(2)] $\pi(a_1)=a^n_1$ and $\pi(a_2)\in K_n^\delta$.
\end{enumerate}
\end{p}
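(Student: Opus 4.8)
The plan is to combine Proposition~\ref{T5.20} with Proposition~\ref{P:Hndelta} by a simple counting (pigeonhole) argument. Fix $\delta>0$. Let $w_1,\ldots,w_N$ enumerate all nontrivial elements of $\mathbb F_2$ of length at most $1/\delta$; this is a finite list whose size depends only on $\delta$. For each fixed $w_j$, Proposition~\ref{T5.20} (with $\delta$ in place of $\ve$) gives some $n_j$ such that for all $n>n_j$, at least $(1-\delta/(2N))[(n-1)!]$ of the $n$-cycles $c\in S_n$ satisfy $d_H(w_j(a^n_1,c),1_n)>1-\delta/(2N)>1-\delta$; here I apply Proposition~\ref{T5.20} with error $\delta/(2N)$ rather than $\delta$. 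Set $n_0=\max_j n_j$.

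Next I would intersect these good sets. For $n>n_0$, the set of $n$-cycles $c$ that fail $d_H(w_j(a^n_1,c),1_n)>1-\delta$ for at least one $j$ has cardinality at most $N\cdot\frac{\delta}{2N}[(n-1)!]=\frac{\delta}{2}[(n-1)!]$. On the other hand, by Proposition~\ref{P:Hndelta}, for $n$ large enough the complement of $K_n^\delta$ among the $n$-cycles has cardinality less than $\delta[(n-1)!]$; choosing a slightly smaller threshold (say apply Proposition~\ref{P:Hndelta} with $\delta/4$ so that $|K_n^{\delta/4}|>(1-\delta/4)[(n-1)!]$ and note $K_n^{\delta/4}\subseteq K_n^\delta$) we may assume the complement of $K_n^\delta$ has size at most $\frac{\delta}{4}[(n-1)!]$. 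Since there are $(n-1)!$ $n$-cycles in $S_n$ in total, and the union of the two bad sets has size at most $(\frac{\delta}{2}+\frac{\delta}{4})[(n-1)!]<(n-1)!$, there exists an $n$-cycle $c\in S_n$ lying in $K_n^\delta$ and simultaneously satisfying $d_H(w_j(a^n_1,c),1_n)>1-\delta$ for every $j=1,\ldots,N$.

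Then I would simply define $\pi\colon\mathbb F_2\to S_n$ by $\pi(a_1)=a^n_1$ and $\pi(a_2)=c$. Condition~(2) holds by construction, since $a^n_1$ is the canonical $n$-cycle and $c\in K_n^\delta$. Condition~(1) holds because every nontrivial $w\in\mathbb F_2$ of length at most $1/\delta$ equals one of the $w_j$, and $d_H(w(\pi(a_1),\pi(a_2)),1_n)=d_H(w_j(a^n_1,c),1_n)>1-\delta$ by the choice of $c$. This completes the proof.

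The only subtlety worth flagging — and the part I expect to require the most care — is bookkeeping of the error parameters: Proposition~\ref{T5.20} produces $n$-cycles on which a word is \emph{close to} a fixed point count $1-\ve$, while we need to rule out, uniformly over the finitely many words $w_j$, the $n$-cycles that are bad for even one of them, and also exclude the non-$K_n^\delta$ cycles. Since all the exceptional sets have density going to $0$ (or controlled by a prescribed fraction of $[(n-1)!]$) and there are only finitely many of them, a union bound together with choosing $n$ large enough closes the argument; there is no genuine obstacle beyond making the constants consistent. Note also that Proposition~\ref{T5.20} is stated for a fixed $w$ with its own threshold $n_0=n_0(w,\ve)$, so taking the maximum over the finite set $\{w_1,\ldots,w_N\}$ is legitimate.
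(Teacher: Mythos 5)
Your proposal is correct and takes essentially the same approach as the paper: apply Proposition~\ref{T5.20} to each of the finitely many words of length at most $1/\delta$ with a suitably small error parameter, combine with Proposition~\ref{P:Hndelta}, and use a union bound (De Morgan) over the exceptional sets of $n$-cycles to find a cycle $c\in K_n^\delta$ avoiding all of them. The only difference from the paper is cosmetic bookkeeping of the error parameter (you split $\delta$ as $\delta/(2N)$ per word plus $\delta/4$ for $K_n^\delta$; the paper picks $\ve\leqslant\delta$ with $\ve\cdot\mathrm{Card}\,B^k_{\mathbb F_2}+\delta<1$), and your containment $K_n^{\delta/4}\subseteq K_n^\delta$ is a correct observation.
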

\begin{proof}
Given $\delta>0$, denote by $B_{\mathbb F_2}^k$ the set of words in $\mathbb F_2$ of length at most $k=\lfloor1/\delta\rfloor$ and choose $\ve>0$, $\ve\leqslant\delta$ such that $\ve\cdot Card\ B_{\mathbb F_2}^k+\delta<1$. By Propositions \ref{T5.20}, applied to each $1_{\mathbb F_2}\not=w\in B_{\mathbb F_2}^k$, for large enough $n$, for at least $(1-\ve\cdot Card\ B_{\mathbb F_2}^k)[(n-1)!]$ $n$-cycles $c\in S_n$ we have $d_H(w(a^n_1,c),1_n)>1-\ve$. For this estimate, we again use De Morgan's formula, as in the proof of Propostion \ref{P:Hndelta}.

Now, using the statement of Proposition \ref{P:Hndelta}, for large enough $n\in\nz$, for at least $(1-\ve\cdot Card\ B_{\mathbb F_2}^k-\delta)[(n-1)!]$ $n$-cycles $c\in S_n$, we have the previous property and also $c\in K_n^\delta$. Choosing such a cycle $c\in S_n$ and setting $\pi(a_2)=c$ yields conditions $(1)$ and $(2)$ above.
\end{proof}

We are ready now to prove Theorems~\ref{thm:trivialcommutant} and ~\ref{thm:ncs}.

\begin{proof}[Proof of Theorem~\ref{thm:trivialcommutant}]
Let $(\delta_k)_{k\in\mathbb N}\in\mathbb R_+^*$ be a decreasing sequence converging to $0$ as $k\to\infty$. Using the previous Proposition, for each $k\in\mathbb N$, construct $\pi_k\colon\mathbb F_2\to S_{n_k}$ with the stated properties for $\delta=\delta_k$. Then construct $\theta=\Pi_{k\to\omega}\pi_k\colon\mathbb F_2\to\Pi_{k\to\omega}S_{n_k}$. By the first condition, $\theta$ is a sofic representation.

Let $b=\Pi_{k\to\omega}b_k\in\Pi_{k\to\omega}S_{n_k}$ be in the commutant of $\theta$. Then we have that $\lim_{k\to\omega}d_H(\pi_k(a_1)b_k,b_k\pi_k(a_1))=0$ and $\lim_{k\to\omega}d_H(\pi_k(a_2)b_k,b_k\pi_k(a_2))=0$. Let $\ve>0$. There exists $F\in\omega$ such that for all $k\in F$, $\delta_k<\ve$ and $d_H(\pi_k(a_1)b_k,b_k\pi_k(a_1))<\ve$, $d_H(\pi_k(a_2)b_k,b_k\pi_k(a_2))<\ve$. As $\pi_k(a_2)\in K_{n_k}^{\delta_k}$, we get $$d_H(b_k,1_{n_k})\leqslant 8\cdot max\{d_H(\pi_k(a_1)b_k,b_k\pi_k(a_1)), d_H(\pi_k(a_2)b_k,b_k\pi_k(a_2)), \delta_k\}<8\ve.$$ It follows that $d_H(b,1_\omega)\leqslant 8\ve$. As $\ve$ is arbitrary, $b=1_\omega$.
\end{proof}

\begin{proof}[Proof of Theorem~\ref{thm:ncs}] 
Using Proposition \ref{f2rep}, we construct $\pi_k\colon\mathbb F_2=\langle a_1, a_2\rangle \to S_{n_k}$ such that:
\begin{enumerate}
\item[(1)] $d_H\left( w(\pi_k(a_1),\pi_k(a_2)), 1_{n_k} \right) \to1$ as $k\to \infty$ for every $1_{\mathbb F_2}\not=w\in \mathbb F_2$ of length at most $k$ (this is the classical limit, not the $\omega$-limit);

\item[(2)] $\pi_k(a_1)=a^{n_k}_1$ and $a^{n_k}_2=\pi_k(a_2)\in K_{n_k}^{1/k}$.
\end{enumerate}

In order to show that $\mathbb F_2\times\Z=\langle a_1, a_2, x \mid [a_1, x ]=1, [a_2, x]=1\rangle$
 is not constraint $(S_{n_k}, d_H, 1_{n_k})_{n\in \mathbb N}$-approximable with respect to $a^{n_k}_1, a_2^{n_k}, n\in \mathbb N$, we use Theorem \ref{thm:ultra}.
 
Suppose to the contrary, and let $\omega$, $f\colon\mathbb N\to\mathbb N,$ and $\theta\colon\mathbb F_2\times\Z\to\Pi_{k\to\omega}S_{n_{f(k)}}$ be given by that theorem.  Then $\lim_{k\to\omega}f(k)=\infty$, otherwise the ultraproduct would be a finite group. It follows that $d_H(\theta(w),1_\omega)=1$ for each non-trivial $w$. Using similar arguments as in the proof of Theorem \ref{thm:trivialcommutant}, we also get that $\theta(\mathbb F_2)$ has trivial commutant.

Let $b=\theta(x)$, where $x$ is the generator of the subgroup $\mathbb Z$. As $b$ is in the commutant of $\theta(\mathbb F_2)$, $b$ needs to be the identity. This is in contradiction with $d_H(\theta(x),1_\omega)=1$.
\end{proof}

This non-trivial example is due to a careful choice of coefficients $a_1^{n_k}, a_2^{n_k}\in S_{n_k}$. If we allow extra space in approximating direct products, 
such as $G\simeq\mathbb F_2\times \Z$, the group becomes constraint $\mathcal F^{sof}$-approximable as our next example shows. 

\begin{ex}[Direct products]\label{ex:yes}
Let $H=\langle a_1,\ldots,a_\ell\rangle$. Fix $\theta\colon H\hookrightarrow\prod_{n\to\omega}S_{n_k}$ a sofic representation and $\theta_k\colon H\to S_{n_k}$ such that $\theta=\prod_{k\to\omega}\theta_k$. Then $\big(H\times G\restriction a_1,\ldots, a_\ell\big)$ is constraint approximable by 
$\big(S_{(n_k)^2}\restriction \theta_k(a_1)\otimes 1_{n_k},\ldots, \theta_k(a_\ell)\otimes 1_{n_k}\big),$ 
whenever $G$ is an arbitrary countable sofic group.
\end{ex}

However, there are examples of pairs of groups, $H\leqslant G$, where any amplification will not help in making the group $G$ constraint approximable with respect to $H$. Such an example is provided in Section 5 of \cite{Pa}. A sofic representation of $\mathbb F_2=\langle a_1, a_2\rangle$ is constructed, that even when amplified it cannot be extended to a sofic representation of $\Z\ast\Z/2\Z=\langle a_1, a_2, x \mid x^2=1,xa_1x=a_2\rangle$.

\begin{te}\cite[Proposition 5.21]{Pa}
There exist $a^{n_k}_1, a_2^{n_k}\in S_{n_k}, k\in \mathbb N$ such that
the group $\Z\ast\Z/2\Z=\langle a_1, a_2, x \mid x^2=1,xa_1x=a_2\rangle$
 is not constraint $(S_{n_k^2}, d_H, 1)_{k\in \mathbb N}$-approximable by $\big(S_{(n_k)^2}\restriction a^{n_k}_1\otimes 1_{n_k}, a^{n_k}_2\otimes 1_{n_k}\big)$.
\end{te}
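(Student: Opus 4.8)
The plan is to imitate the proof of Theorem~\ref{thm:ncs}: take $a_1^{n_k}\in S_{n_k}$ to be the canonical $n_k$-cycle of Proposition~\ref{f2rep} and $a_2^{n_k}=c_k\in S_{n_k}$ a suitably \emph{generic} $n_k$-cycle, and show that no homomorphism $\theta\colon\Z\ast\Z/2\Z\to\prod_{k\to\omega}S_{n_{f(k)}^2}$ with $\theta(a_i)=\prod_{k\to\omega}(a_i^{n_{f(k)}}\otimes 1_{n_{f(k)}})$ exists. By Theorem~\ref{thm:ultra}, if $\Z\ast\Z/2\Z$ were constraint $(S_{n_k^2},d_H,1)$-approximable with these coefficients, such a $\theta$ would exist, giving $p:=\theta(x)=\prod_{k\to\omega}p_k$ with $p^2=1_\omega$, $p\theta(a_1)p^{-1}=\theta(a_2)$, and hence (from $x^2=1$) $p\theta(a_2)p^{-1}=\theta(a_1)$. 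Since conjugating by $p$ and by $p^{-1}$ differ by at most $2d_\omega(p^2,1_\omega)=0$, the element $p$ is an exact involution of $\prod_{k\to\omega}S_{n_{f(k)}^2}$ exactly swapping $\theta(a_1)$ and $\theta(a_2)$; concretely, for each fixed $\eta>0$ and $\omega$-almost every $k$, with $n:=n_{f(k)}$ and $a:=a_1^{n}$, the permutation $p_k\in S_{n^2}$ satisfies $d_H(p_k^2,1_{n^2})<\eta$, $d_H(p_k(a\otimes 1_n)p_k^{-1},c_{f(k)}\otimes 1_n)<\eta$ and $d_H(p_k(c_{f(k)}\otimes 1_n)p_k^{-1},a\otimes 1_n)<\eta$ (here $f(k)\to\infty$ along $\omega$, else the ultraproduct is finite). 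So it is enough to produce, for \emph{one fixed} small $\eta>0$, a generic $n$-cycle $c$ (for all large $n$) admitting \emph{no} $p\in S_{n^2}$ with these three $\eta$-inequalities: choosing such $c_k$ for $n_k\to\infty$ contradicts the above for $\omega$-almost every $k$. We also take $c_k$ to be $\delta_k$-soficly free with $\delta_k\to 0$ (Proposition~\ref{f2rep}), which makes $\F_2=\langle a_1,a_2\rangle$ itself constraint approximable with respect to $a_1^{n_k}\otimes 1,a_2^{n_k}\otimes 1$, so that the example is genuine --- even this amplification does not help.

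The heart of the proof is a rigidity statement: for a generic $n$-cycle $c$, any such $p$ is, up to $d_H$-error $O(\sqrt\eta)$, a \emph{block permutation} $p(s,t)=(\tau_t(s),\sigma(t))$ with $\sigma\in S_n$ and $\tau_t\in S_n$. Writing $p(s,t)=(u(s,t),v(s,t))$ and using that $\langle a,c\rangle$ acts transitively on $[n]$ (as the $n$-cycle $a$ already does), the relation $p(a\otimes 1)p^{-1}\approx c\otimes 1$ forces, off at most $\eta n^2$ pairs $(s,t)$, both $v(a(s),t)=v(s,t)$ and $u(a(s),t)=c(u(s,t))$; symmetrically, $p(c\otimes 1)p^{-1}\approx a\otimes 1$ forces $v(c(s),t)=v(s,t)$ and $u(c(s),t)=a(u(s,t))$ off $O(\eta)n^2$ pairs. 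Hence, for most $t$, $v(\cdot,t)\colon[n]\to[n]$ is invariant under $a$ and under $c$ except at $O(\eta)n$ points, so each level set of $v(\cdot,t)$ --- and every union of level sets --- is simultaneously a union of at most $O(\eta)n$ arcs of the cyclic order of $a$ and at most $O(\eta)n$ arcs of that of $c$. The genericity we impose on $c$ is precisely that \emph{no} subset of $[n]$ of size in $[\eta' n,(1-\eta')n]$ is simultaneously a union of at most $\eta n$ $a$-arcs and at most $\eta n$ $c$-arcs; a large-deviation count gives this for all but an $e^{-\Omega(n)}$-fraction of $n$-cycles $c$ (for $\eta$ small relative to $\eta'$), since there are only $e^{H(4\eta)n}$ candidate sets --- unions of few $a$-arcs, $H$ the binary entropy --- while a fixed medium set is a union of few $c$-arcs for only an $e^{-\Omega(n)}$-fraction of $c$'s. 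This rules out $v(\cdot,t)$ being non-constant (even ``slowly varying''): some union of its level sets would be medium-sized. Therefore $v(\cdot,t)$ takes one value $\sigma(t)$ on $\geqslant(1-\eta')n$ points for most $t$; so $\sigma$ is near a permutation, $\tau_t:=p|_{[n]\times\{t\}}$ is near a bijection, and $\tau_t a\tau_t^{-1}\approx c$, $\tau_t c\tau_t^{-1}\approx a$. Then $\tau_t^2$ $O(\sqrt\eta)$-commutes with both $a$ and $c$; imposing $c\in K_n^{C\sqrt\eta}$ --- which by Proposition~\ref{P:Hndelta} excludes only a $C\sqrt\eta$-fraction of $n$-cycles --- forces $d_H(\tau_t^2,1_n)=O(\sqrt\eta)$. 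Fixing one good $t$ yields $\tau\in S_n$ with $d_H(\tau^2,1_n)<C_1\sqrt\eta$ and $d_H(\tau a\tau^{-1},c)<C_1\sqrt\eta$: an approximate involution approximately conjugating the standard cycle $a$ to $c$.

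It remains to see that such $c$ are rare. With $\varepsilon:=C_1\sqrt\eta$, the number of $\tau\in S_n$ with $d_H(\tau^2,1_n)<\varepsilon$ is at most $n\cdot\tfrac{n!}{(n-\varepsilon n)!}\cdot I(n)\leqslant n^{1+\varepsilon n}\,I(n)$ (choose the $\leqslant\varepsilon n$ points in cycles of length $\geqslant 3$ of $\tau$, the permutation there, and an involution on the rest), where $I(n)$ is the number of involutions in $S_n$; and for each fixed $\tau$ the number of $n$-cycles $c$ with $d_H(\tau a\tau^{-1},c)<\varepsilon$ is at most $n^{\lfloor\varepsilon n\rfloor+1}$, as in Proposition~\ref{P5.13}. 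Hence the number of $n$-cycles admitting such a $\tau$ is at most $n^{n/2+2\varepsilon n+2}$ (using $I(n)<n^{n/2}$), which is $o((n-1)!)$ as soon as $2\varepsilon<\tfrac12$, i.e. for $\eta$ fixed small. Intersecting all constraints, for such $\eta$ and all large $n$ there is an $n$-cycle $c$ that is $\delta$-soficly free (any prescribed $\delta$), lies in $K_n^{C\sqrt\eta}$, satisfies the ``no medium arc-set'' genericity, and admits no approximate involution approximately conjugating $a$ to it. Choosing $c_k$ of this kind for $n_k\to\infty$ puts $c_{f(k)}$ outside the bad set for $\omega$-almost every $k$, contradicting the conclusion of the first paragraph; this proves the theorem. (One can reformulate the second paragraph more conceptually: $\theta(\F_2)=\rho(\F_2)\otimes 1$ acts ergodically on the first Loeb factor, so a normaliser of it preserves the algebra of functions of the second coordinate; but passing from that to a workable combinatorial statement at finite level is exactly the rigidity above.)

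The hard part will be this rigidity step: converting ``$p$ approximately normalises $\langle a\otimes 1,c\otimes 1\rangle$'' into ``$p$ is an approximate block permutation'' with explicit, usable errors, and in particular checking that the only obstruction is the ``arc-shearing'' excluded by the genericity of $c$, with no escape through $v(\cdot,t)$ being non-constant but slowly varying. The remaining ingredients --- the large-deviation count giving the genericity of $c$, the abundance of cycles in $K_n^{C\sqrt\eta}$ (Proposition~\ref{P:Hndelta}) and of soficly free cycles (Proposition~\ref{f2rep}), and the count of approximate involutions --- are routine by comparison, and the matching of parameters is clean because $\eta$ is fixed while only the soficly-free parameter $\delta_k$ is sent to $0$.
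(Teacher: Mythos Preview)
The paper does not give its own proof of this statement: it is quoted verbatim as \cite[Proposition~5.21]{Pa} and then the text moves on. So there is no in-paper argument to compare against, only the original proof in \cite{Pa} (which is set up in the language of the Loeb space and the convex structure on sofic representations rather than as a direct combinatorial argument).

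Your approach is a reasonable self-contained reconstruction in the spirit of the proof of Theorem~\ref{thm:ncs}, and the global architecture is sound: reduce via Theorem~\ref{thm:ultra} to an element $p\in\Pi_{k\to\omega}S_{n_{f(k)}^2}$ that is an involution swapping $a\otimes1$ and $c\otimes1$; prove a rigidity statement forcing $p$ to be close to a block map $(s,t)\mapsto(\tau_t(s),\sigma(t))$; deduce a single $\tau\in S_n$ with $\tau^2\approx 1$ and $\tau a\tau^{-1}\approx c$; and finally show by counting that such $c$ form an $o(1)$-fraction of $n$-cycles. The use of $K_n^\delta$ to pass from ``$\tau_t^2$ almost commutes with $a$ and $c$'' to ``$\tau_t^2\approx 1$'' is exactly right, and your final count (approximate involutions times Hamming balls, compared to $(n-1)!$) is correct for $\varepsilon<1/4$.

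Two places deserve more care before this is a proof rather than a plan. First, in the rigidity step your error bookkeeping is loose: from a global bound $d_H(p(a\otimes1)p^{-1},c\otimes1)<\eta$ you only get, by Markov, that for a $(1-\sqrt\eta)$-fraction of $t$ the map $v(\cdot,t)$ is $\sqrt\eta$-invariant under $a$ (and likewise for $c$), so the arc-genericity must be stated at scale $\sqrt\eta$, and the resulting ``nearly constant'' conclusion is at a further scale $\eta'$ chosen so that the union-bound count goes through; you should fix the hierarchy $\eta\ll\eta'\ll\eta''$ once and track it explicitly. Second, the large-deviation estimate ``a fixed medium set is a union of few $c$-arcs for only an $e^{-\Omega(n)}$-fraction of $n$-cycles'' needs an actual argument for uniformly random $n$-cycles (not just permutations); this is true, but you should either cite a concentration inequality valid for the uniform measure on $n$-cycles or reduce to permutations via the standard $1/n$ comparison. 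Your final parenthetical remark --- that the rigidity is really ``ergodicity on the first Loeb factor forces a normaliser to preserve the second-coordinate algebra'' --- is in fact much closer to how \cite{Pa} organises the proof, and working at that level avoids some of the $\eta$-chasing.
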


Our method of proving Theorems~\ref{thm:trivialcommutant} and ~\ref{thm:ncs} is very relevant to Question~2.14 from \cite{Pa1}
to which we turn our attention now.  The remainder of this section assumes familiarity with \cite{Pa1} and, to a lesser extent, \cite{Pa}. The space of sofic representations $Sof(G,P^\omega)$ is introduced in  \cite[Section 1.4]{Pa1} and its convex structure is defined in \cite[Section 2.2]{Pa1}. All the notation used is introduced in Sections 1.1 and 1.2 of the same article. For the proof we also need some techniques from \cite[Section 5]{Pa}. In particular, the normalized Hamming distance on arbitrary matrices, and the notion of expander in this context are important. 

\begin{de}\cite[Definition 5.1]{Pa}
For $x,y\in M_n(\mathbb C)$ we define:
\[d_H(x,y)=\frac1nCard\left\{i:\exists j\ x(i,j)\neq y(i,j)\right\}.\]
\end{de}

This definition is consistent with the normalised Hamming distance on permutations we  used so far.

\begin{de}
A sequence of pairs of permutations $\{a_1^k,a_2^k\}_{k\in \mathbb N}$, $a_1^k,a_2^k\in P_{n_k}$, is an \emph{expander} if there exists $\lambda>0$ such that for any $k$ and any projection $p\in D_{n_k}$ with $0<Tr(p)\leqslant\frac12$ we have $\lambda Tr(p)<d_H\big(p,a_1^kp(a_1^k)^*\big)+d_H\big(p,a_2^kp(a_2^k)^*\big)$ (see condition (1) of  \cite[Proposition 5.5]{Pa}).
\end{de}

\begin{p}
Let $\theta\colon\mathbb F_2=\langle a_1, a_2\rangle\hookrightarrow\Pi_{k\to\omega}S_{n_k}$ be a sofic representation. Choose $a_1^k,a_2^k\in S_{n_k}$ such that $\theta(a_1)=\Pi_{k\to\omega}a_1^k$ and $\theta(a_2)=\Pi_{k\to\omega}a_2^k$. Assume that $\{a_1^k,a_2^k\}_{k\in \mathbb N}$ is an expander. Then $[\theta]$ is an extreme point in $Sof(\mathbb F_2,P^\omega)$.
\end{p}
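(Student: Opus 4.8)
The plan is to show that if $[\theta]$ decomposes as a convex combination $[\theta] = \sum_i t_i[\theta_i]$ in $Sof(\mathbb F_2, P^\omega)$, then necessarily each $[\theta_i] = [\theta]$. Recall from \cite[Section 2.2]{Pa1} that a convex combination of sofic representations $\theta_i\colon \mathbb F_2 \to \Pi_{k\to\omega}S_{n_k^i}$ with weights $t_i$ is realized, up to the equivalence relation defining $Sof$, by the block-diagonal representation $\theta = \bigoplus_i \theta_i$ acting on a space cut into blocks of relative trace $t_i$ (after adjusting the degrees $n_k$ so that the proportions converge to $t_i$ along $\omega$). So it suffices to prove: if $\theta = \bigoplus_i \theta_i$ and the pair $\{a_1^k, a_2^k\}_{k}$ representing $\theta(a_1), \theta(a_2)$ is an expander, then there is only one block, i.e.\ the decomposition is trivial.

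First I would observe that a nontrivial block decomposition of $\theta$ produces a nontrivial $\theta(\mathbb F_2)$-invariant projection $p \in \Pi_{k\to\omega}D_{n_k}$, namely the central projection onto one of the blocks (or a union of blocks), with $0 < Tr(p) < 1$; passing to $p$ or $1-p$ we may assume $Tr(p) \leqslant \tfrac12$. Lift $p$ to projections $p_k \in D_{n_k}$ with $Tr(p_k) \to Tr(p)$ along $\omega$. Invariance under $\theta(a_i)$ means $d_H(p_k, a_i^k p_k (a_i^k)^*) \to 0$ along $\omega$ for $i = 1,2$. But the expander condition gives $\lambda\, Tr(p_k) < d_H(p_k, a_1^k p_k(a_1^k)^*) + d_H(p_k, a_2^k p_k(a_2^k)^*)$ for every $k$; taking the $\omega$-limit yields $\lambda\, Tr(p) \leqslant 0$, so $Tr(p) = 0$, a contradiction with $0 < Tr(p)$. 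Hence the only invariant projections are $0$ and $1$, the block decomposition is trivial, and $[\theta]$ is extreme.

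The point requiring the most care is the passage between the abstract convex structure on $Sof(\mathbb F_2, P^\omega)$ and the concrete block-diagonal model: I must invoke exactly how \cite[Section 2.2]{Pa1} defines convex combinations and the equivalence relation, checking that an extreme-point obstruction is genuinely detected by the existence of a nontrivial invariant diagonal projection in the ultraproduct (as opposed to merely in some amplification), and that the degrees can be arranged so that $p$ has a well-defined trace equal to the convex weight. This is where the expander hypothesis is used in full: it is a $k$-uniform (not just $\omega$-asymptotic) spectral-gap statement, which is precisely what survives the ultralimit to forbid almost-invariant projections of intermediate size. The remaining steps — lifting projections, commuting $d_H$ estimates, the $\omega$-limit computation — are routine.
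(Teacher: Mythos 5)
You have correctly identified the heart of the argument — the expander estimate survives the ultralimit and forces any almost-invariant diagonal projection to be trivial — but your proof only covers the case $r_k=1$, which is not enough. The convex structure on $Sof(\mathbb F_2, P^\omega)$ in \cite[Section~2.2]{Pa1} is defined via amplifications: a nontrivial convex decomposition $[\theta]=\sum_i t_i[\theta_i]$ need not be witnessed by a projection in $\Pi_{k\to\omega}D_{n_k}$; the general witness, per \cite[Lemma~2.12]{Pa1}, is a projection $p=\Pi_{k\to\omega}p_k$ in $(\theta\otimes 1_{r_k})'\cap\Pi_{k\to\omega}D_{n_kr_k}$ for some auxiliary sequence $\{r_k\}$, and what one must show is that any such $p$ is of the form $1_{n_k}\otimes s_k$ for a projection $s_k\in D_{r_k}$, \emph{not} merely that $Tr(p)\in\{0,1\}$. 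You explicitly flag this as the point requiring care and then assert without argument that the obstruction is "genuinely detected" without amplification; that assertion is precisely what needs to be proved, and it is not automatic.

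The paper's proof does exactly what your draft skips: it writes $p_k\in D_{n_kr_k}$ as $p_k=p_k^1\oplus\cdots\oplus p_k^{r_k}$ with $p_k^i\in D_{n_k}$, applies the (per-block, $\min\{Tr,1-Tr\}$-form of the) expander inequality to each $p_k^i$ separately, rounds each $p_k^i$ to $0_{n_k}$ or $Id_{n_k}$ (call the result $q_k=\bigoplus_iq_k^i$), and then averages the $r_k$ inequalities to bound $\lambda\, d_H(p_k,q_k)$ by $d_H\bigl(p_k,(a_1^k\otimes1_{r_k})p_k(a_1^k\otimes1_{r_k})^*\bigr)+d_H\bigl(p_k,(a_2^k\otimes1_{r_k})p_k(a_2^k\otimes1_{r_k})^*\bigr)$, which tends to $0$. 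This gives $p=q=1_{n_k}\otimes s$ and hence that $(\theta\otimes1_{r_k})_q$ is an amplification of $\theta$, so $[(\theta\otimes1_{r_k})_q]=[\theta]$. Your version, in which the expander estimate is applied once to a projection of intermediate trace, proves only that $\theta$ has no nontrivial invariant projection in $\Pi_{k\to\omega}D_{n_k}$; that is strictly weaker than extremality in the convex structure of \cite{Pa1}, so the proof as written has a genuine gap. The fix is the block decomposition and averaging step, which is where the pointwise (for every projection $s\in D_{n_k}$ and every $k$) form of the expander hypothesis is really used.
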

\begin{proof}
We use Lemma 2.12 from \cite{Pa1}. Let $\{r_k\}_{k\in \mathbb N}$ be a sequence of natural numbers and $p=\Pi_{k\to\omega}p_k\in (\theta\otimes 1_{r_k})'\cap\Pi_{k\to\omega}D_{n_kr_k}$ be a projection. We show that $p=\Pi_{k\to\omega}q_k$, where $q_k=Id_{n_k}\otimes s_k$, for some projection $s_k\in D_{r_k}$. 

As $p_k\in D_{n_kr_k}$, there exist projections $p_k^i\in D_{n_k}$ such that $p_k=p_k^1\oplus\ldots\oplus p_k^{r_k}$. We construct $q_k\in D_{n_kr_k}$, by replacing each projection $p_k^i$ with $0_{n_k}$ or $Id_{n_k}$, depending on which one is closer. So $q_k=q_k^1\oplus\ldots\oplus q_k^{r_k}$, with $q_k^i\in\{0_{n_k},Id_{n_k}\}$ and $d_H(p_k^i,q_k^i)=min\{Tr(p_k^i),1-Tr(p_k^i)\}$. Thus $d_H(p_k,q_k)=\frac1{r_k}\sum_{i=1}^{r_k}min\{Tr(p_k^i),1-Tr(p_k^i)\}$. 

From the definition of an expander, we can deduce that $\lambda min\{Tr(s),1-Tr(s)\}<d_H\big(s,a_1^ks(a_1^k)^*\big)+d_H\big(s,a_2^ks(a_2^k)^*\big)$ for any projection $s\in D_{n_k}$. It follows that:
\begin{align*}
\lambda d_H(p_k,q_k)<&\frac1{r_k}\sum_{i=1}^{r_k}d_H\big(p_k^i,a_1^kp_k^i(a_1^k)^*\big)+d_H\big(p_k^i,a_2^kp_k^i(a_2^k)^*\big)\\
=&d_H\big(p_k,(a_1^k\otimes 1_{r_k})p_k(a_1^k\otimes 1_{r_k})^*\big)+d_H\big(p_k,(a_2^k\otimes 1_{r_k})p_k(a_2^k\otimes 1_{r_k})^*\big).
\end{align*}
Setting $q=\Pi_{k\to\omega}q_k$, and passing to the ultralimit, we get:
\[\lambda d_H(p,q)\leqslant d_H(p,(\theta(a_1)\otimes 1_{r_k})p(\theta(a_1)\otimes 1_{r_k})^*)+d_H(p,(\theta(a_2)\otimes 1_{r_k})p(\theta(a_2)\otimes 1_{r_k})^*).\]
As $p\in (\theta\otimes 1_{r_k})'$, the last term is equal to zero. It follows that $d_H(p,q)=0$, so $p=q$. 

By construction, $q=1_{n_k}\otimes s$, with $s$ a projection in $\Pi_{k\to\omega}D_{r_k}$. Then $(\theta\otimes 1_{r_k})_q$ is just an amplification of $\theta$ (check Definition 2.4 from \cite{Pa1}). Then $[(\theta\otimes 1_{r_k})_q]=[\theta]$ and we are done.
\end{proof}

\begin{rem}
Elaborating on these arguments, one can prove that $[\theta]$ is an extreme point even when the expander condition is replaced by the weaker assumption that the induced action of $\theta$ on the Loeb space is ergodic.
\end{rem}

\begin{cor}
There exists a sofic representation of the free group $\mathbb F_2,$
$$\theta\colon\mathbb F_2\hookrightarrow\Pi_{k\to\omega}S_{n_k},$$ such that $[\theta]$ is an extreme point of the convex structure $Sof(\mathbb F_2,P^\omega)$, and such that its commutant is trivial. This solves in negative Question 2.14 from \cite{Pa1}.
\end{cor}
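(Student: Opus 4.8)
The plan is to reuse the very sofic representation $\theta=\Pi_{k\to\omega}\pi_k\colon\mathbb F_2\hookrightarrow\Pi_{k\to\omega}S_{n_k}$ produced in the proof of Theorem~\ref{thm:trivialcommutant}, but to be slightly more demanding when selecting the $n_k$-cycle $\pi_k(a_2)$. Concretely, I would strengthen Proposition~\ref{f2rep} so that, for each $\delta>0$ and all large $n$, the $n$-cycle $c=\pi(a_2)$ can be chosen so that, in addition to lying in $K_n^\delta$ and obeying the word estimate~(1), the pair $(\pi(a_1),\pi(a_2))=(a^n_1,c)$ satisfies the expander inequality recalled above with one fixed constant $\lambda>0$, not depending on $\delta$ or $n$. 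Granting this, take $\delta_k\downarrow 0$ and form $\theta=\Pi_{k\to\omega}\pi_k$ as before. Estimate~(1) gives that $\theta$ is an injective sofic representation with $d_\omega(\theta(w),1_\omega)=1$ for every $1_{\mathbb F_2}\neq w\in\mathbb F_2$; the membership $\pi_k(a_2)\in K_{n_k}^{\delta_k}$ gives that $\theta(\mathbb F_2)$ has trivial commutant, by the argument in the proof of Theorem~\ref{thm:trivialcommutant} (whose only inputs are $\pi_k(a_1)=a^{n_k}_1$ and $\pi_k(a_2)\in K_{n_k}^{\delta_k}$); and the expander property of $(\pi_k(a_1),\pi_k(a_2))$, with $\lambda$ uniform in $k$, feeds straight into the (unnumbered) Proposition proved above to give that $[\theta]$ is an extreme point of $Sof(\mathbb F_2,P^\omega)$. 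This is consistent with the Remark above, since a uniform expander condition on the finite approximants already forces the induced $\mathbb F_2$-action on the Loeb space to be ergodic. Assembling these three conclusions yields the Corollary, hence the desired negative answer to \cite[Question~2.14]{Pa1}.

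What must be added is therefore the following counting fact: there is $\lambda>0$ such that for every $\delta>0$ and all large enough $n$, the number of $n$-cycles $c\in S_n$ for which $(a^n_1,c)$ fails the expander inequality with constant $\lambda$ is at most $\delta[(n-1)!]$. I would prove it along classical lines. Identify a projection $p\in D_n$ with the subset $S\subseteq\{1,\dots,n\}$ it supports and assume $|S|=s\leqslant n/2$. If $S$ consists of $t$ cyclic arcs, then $d_H(p,a^n_1 p (a^n_1)^*)=2t/n$, so every $S$ with $t\geqslant\lambda s/2$ already satisfies the required inequality, whatever $c$ is; it remains to control the subsets with fewer than $\lambda s/2$ arcs. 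For those I would run a union bound over $c$ chosen uniformly among $n$-cycles: the number of such ``few-arc'' subsets of size $s$ is bounded in terms of $\lambda s/2$, and for a fixed such $S$ the probability that $d_H(p,cpc^*)<\lambda s/n$ — equivalently that $|c(S)\cap S|>(1-\lambda/2)s$ — is exponentially small, by a large-deviation estimate for the (essentially hypergeometric) variable $|c(S)\cap S|$. Summing over $s$ gives the claim. This is precisely the standard fact that the union of the canonical Hamilton cycle on $n$ vertices with one uniformly random $n$-cycle is, with high probability, an expander — a random $4$-regular multigraph — and one may either cite it or reprove it within the normalized-Hamming formalism of \cite[Section~5]{Pa}. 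Finally, since the $n$-cycles meeting each of the three requirements — satisfying the expander inequality together with $a^n_1$; lying in $K_n^\delta$ (Proposition~\ref{P:Hndelta}); the word estimate~(1) (the argument already given in Proposition~\ref{f2rep}) — each form a subset of density $1-O(\delta)$ among the $(n-1)!$ many $n$-cycles, their intersection is nonempty for $n$ large, and any $c$ in it may be used as $\pi_k(a_2)$.

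The one genuinely new piece of work is this expansion count, and within it the regime where $s$ is a fixed proportion of $n$ close to $n/2$: there the family of few-arc subsets is exponentially large, so the large-deviation bound on $|c(S)\cap S|$ for a random $n$-cycle must be quantitatively sharp, with the right constant, to survive the union bound; and one must keep in mind that the deterministic cycle $a^n_1$ is on its own a poor expander, so all of the expansion has to be carried by $c$ — the preliminary step of discarding subsets with many boundary arcs is what makes this feasible. Everything else — injectivity of $\theta$, the sofic-representation property, triviality of the commutant, and extremality of $[\theta]$ — then follows immediately from the results established earlier in this section.
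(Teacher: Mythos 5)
Your proof follows essentially the same plan as the paper's: reuse the construction from Theorem~\ref{thm:trivialcommutant}, strengthen Proposition~\ref{f2rep} so that the chosen $n$-cycle $c$ additionally makes $(a_1^n,c)$ an expander pair with a uniform constant $\lambda$, and then conclude extremality via the unnumbered Proposition while the membership $c\in K_n^\delta$ continues to give the trivial-commutant conclusion. The only difference is that the paper simply cites \cite[Proposition~5.11]{Pa} (with $\lambda=0.2$) for the abundance of expander cycles, whereas you re-derive that counting fact by a union bound and a large-deviation estimate.
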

\begin{proof}
We follow the same plan as in the proof of Theorem \ref{thm:trivialcommutant}. Only Proposition~\ref{f2rep} has to be slightly adapted. Namely, when choosing $c\in S_n$, we make sure that  $a_1^n$ and $c$ satisfy the expander formula for $\lambda=0.2$. This can be done by Proposition 5.11 from \cite{Pa}. The ``$a,c$" notation used there is consistent with the present article. The sofic representation so obtained, has trivial commutant and it's an extreme point by the previous proposition.
\end{proof}

\section{Centralizer equation in symmetric groups and soficity}\label{sec:centra}

In this section, we investigate the constraint $\mathcal F^{sof}$-stability for the commutator equation:
$\ell=1, m=1, W=[a,x],$ where $a$ is a coefficient and $x$ is a variable.

Let $X=\{1,\ldots,n\}$.  For $p\in S_n$, a \emph{cycle} of $p$ is a subset $c=\{x_1,\ldots, x_t\}\subseteq X$ on which the action of $p$ is \emph{ergodic}, that is, $\{x_1,\ldots,x_t\}=\{p(x_1),p^2(x_1),\ldots,p^t(x_1)\}$. Then $Card(c)$ plays the role of the length of the cycle $c$. For $i\in\nz^*$ and $p\in S_n,$ we define 
$$cyc_i(p)=\frac1nCard\left\{x\in X:x\in c,Card(c)=i\right\},$$ that is, the ratio of elements in $X$ that are part of cycles of length $i$ in $p$. We have $\sum_icyc_i(p)=1$.
For $i\in\nz^*$ and $p=\Pi_{k\to\omega}p_k,$ we define $$cyc_i(p)=\lim_{k\to\omega}cyc_i(p_k).$$ As $\sum_icyc_i(p_k)=1$ for every $k$, it follows that $\sum_icyc_i(p)\leqslant 1$. Also set: $$cyc_\infty(p)=1-\sum_icyc_i(p)\geqslant 0.$$ As in the case of symmetric group $S_n$, the numbers $cyc_i(p)$, associated to $p=\Pi_{k\to\omega}p_k$, constitute a complete set of invariants under conjugacy equivalence relation:

\begin{p}\cite[Proposition 2.3(4)]{ElSza}\label{conjugacyclasses}
Two elements $p,q\in\Pi_{k\to\omega}S_{n_k}$ are conjugate if and only if $cyc_i(p)=cyc_i(q)$ for all $i\in\nz^*$.
\end{p}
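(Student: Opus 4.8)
This is \cite[Proposition 2.3(4)]{ElSza}; here is the argument I would give. Write $p=\prod_{k\to\omega}p_k$ and $q=\prod_{k\to\omega}q_k$ with $p_k,q_k\in S_{n_k}$. For the forward direction I would first record the elementary stability estimate $|cyc_i(p')-cyc_i(p'')|\le i\cdot d_H(p',p'')$ for all $p',p''\in S_n$ and $i\in\nz^*$: if a point lies in an $i$-cycle $C$ of $p'$ but in no $i$-cycle of $p''$, then $C$ must meet the set of (at most $n\,d_H(p',p'')$) points where $p'$ and $p''$ disagree, since otherwise $p'$ and $p''$ coincide on the $p'$-invariant set $C$ and $p''$ restricts there to a single $i$-cycle, forcing the point into an $i$-cycle of $p''$; as each $i$-cycle has $i$ points, the set of points lying in $i$-cycles of $p'$ but not of $p''$ has at most $i\,n\,d_H(p',p'')$ elements, and symmetry gives the claim. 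Now if $q=gpg^{-1}$, choose a representative $g=\prod_{k\to\omega}g_k$; then $d_H(q_k,g_kp_kg_k^{-1})\to 0$ along $\omega$, while $cyc_i(g_kp_kg_k^{-1})=cyc_i(p_k)$ exactly, since conjugation in a finite symmetric group preserves cycle type. The estimate then yields $cyc_i(q)=\lim_{k\to\omega}cyc_i(q_k)=\lim_{k\to\omega}cyc_i(p_k)=cyc_i(p)$ for every $i$.

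For the converse, assume $cyc_i(p)=cyc_i(q)$ for all $i$, and construct the conjugator. Fix $\eps>0$; pick $N$ large and then $\eta>0$ small (both depending only on $\eps$), and let $F\in\omega$ consist of the $k$ with $n_k>1/\eta$ and $|cyc_i(p_k)-cyc_i(q_k)|<\eta$ for every $i\le N$ — a finite intersection of $\omega$-sets. For $k\in F$ I build $g_k\in S_{n_k}$ as follows: for each $i\le N$, pair as many $i$-cycles of $p_k$ with $i$-cycles of $q_k$ as possible, and let $g_k$ carry each paired $i$-cycle of $p_k$ onto its partner respecting the cyclic order, so $g_kp_kg_k^{-1}$ and $q_k$ agree there; the unpaired short cycles involve at most $N\eta\,n_k$ points and are absorbed into the error. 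The remainder of $p_k$ has only cycles of length $>N$, hence fewer than $n_k/N$ of them; multiplying $p_k$ by that many transpositions merges them into one long cycle, changing at most a $2/N$-fraction of the points, and likewise for $q_k$. The two resulting long cycles have lengths agreeing up to $o(n_k)$ along $\omega$, because the parts of $p_k$ and of $q_k$ supported on $\le N$-cycles both have normalized size tending along $\omega$ to $\sum_{i\le N}cyc_i(p)=\sum_{i\le N}cyc_i(q)$; so $g_k$ can map one long cycle onto an initial segment of the other with only an $o(1)$-fractional mismatch. Summing the contributions gives $d_H(g_kp_kg_k^{-1},q_k)<\eps$ for all large $k\in F$, once $N,\eta$ were chosen suitably. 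Finally, running this for $\eps=1/m$, $m\in\nz^*$, produces sets $F_m\in\omega$ and permutations $g_k^{(m)}$; letting $g_k=g_k^{(m)}$ for the largest $m$ with $k\in F_1\cap\cdots\cap F_m$ yields $g=\prod_{k\to\omega}g_k$ with $d_H(gpg^{-1},q)=0$, that is $gpg^{-1}=q$.

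The main obstacle is the converse, and within it the treatment of the ``$cyc_\infty$'' mass: cycles whose lengths may grow with $k$ cannot be matched one by one, so each side must be collapsed to a single cycle at an $O(1/N)$ cost in $d_H$ and then the two collapsed cycles reconciled despite their lengths agreeing only asymptotically; keeping all of these errors uniformly small in $k$, and then assembling the per-index conjugators $g_k$ into a single genuine element of the ultraproduct via the diagonal argument, is where the care is needed.
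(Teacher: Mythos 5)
The paper does not prove this statement; it is quoted verbatim from Elek--Szab\'o \cite{ElSza} as Proposition 2.3(4), so there is no in-paper argument to compare against. (The authors do refer back to the Elek--Szab\'o proof inside their own proof of Theorem~\ref{thm:centra}, where the direct implication decomposes each $p_k$'s cycles into a ``short, matchable'' part and a residual part of vanishing count but possibly nonvanishing mass --- the same mechanism your converse uses.)

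Your argument is correct in outline and is, as far as I can tell, the same strategy as Elek--Szab\'o's. The forward direction via the Lipschitz estimate $|cyc_i(p')-cyc_i(p'')|\le i\,d_H(p',p'')$ is clean and the justification you give (an $i$-cycle of $p'$ disjoint from the disagreement set is $p''$-invariant and equals an $i$-cycle of $p''$) is exactly right. For the converse, the three error sources --- unpaired cycles of length $\le N$ (at most $N\eta n_k$ points), collapsing all length-$>N$ cycles of $p_k$ and of $q_k$ to single long cycles (at most $2/N$ each), and the length mismatch of the two resulting long cycles (at most $N\eta n_k+1$ points) --- do sum to something $<\ve$ for $N$ large, then $\eta$ small, then $n_k$ large, so the quantitative bookkeeping checks out.

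Two small points worth tightening. First, in the converse you must define $g_k$ as a genuine bijection of $\{1,\dots,n_k\}$, so after matching short cycles and mapping one merged long cycle onto an initial segment of the other, the leftover points on each side (unpaired short cycles of $p_k$, the tail of the longer merged cycle, and the analogues for $q_k$) have equal total cardinality and should be bijected arbitrarily; this is implicit in ``absorbed into the error'' but should be said. Second, in the diagonalization ``let $g_k=g_k^{(m)}$ for the largest $m$ with $k\in F_1\cap\cdots\cap F_m$'' the largest such $m$ need not exist when $k\in\bigcap_m F_m$; the standard fix is to take $g_k=g_k^{(m)}$ on $F_1\cap\cdots\cap F_m\setminus F_1\cap\cdots\cap F_{m+1}$ (and, say, $g_k^{(k)}$ on $\bigcap_m F_m$), or simply to replace $F_m$ by $F_m\cap\{k:k\ge m\}$ so the intersections are finite. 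Neither of these is a real gap, just a detail that the write-up should not leave implicit.
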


\subsection{An almost centralizing permutation away from the centralizer}
Let $p\in S_n$ be an arbitrary permutation and $c=\{x_1,\ldots,x_t\}$ be a cycle of $p$. Let us assume that $p^i(x_1)=x_{i+1}$, for $i=1,\ldots, t-1$. We want to construct a permutation, supported on $c$, almost commuting with $p$, and that is far from the centralizer of $p$. Construct $r_{p, c}\in S_n$ as follows: let $a=\lfloor t/3\rfloor$ and define $r_{p,c}(x_j)=x_{j+a}$ for $j=1,\ldots,a$; $r_{p,c}(x_j)=x_{j-a}$ for $j=a+1,\ldots,2a$; $r_{p, c}(x_j)=x_j$ for $j=2a+1,\ldots, t$ and $r_{p,c}(x)=x$ for $x\notin c$. So, $r_{p,c}(c)=c$ and $r_{p,c}$ is the identity outside of $c$. Then $d_H(p\cdot r_{p,c},r_{p,c}\cdot p)=3/n$. Since every permutation commuting with a cycle is a power of that cycle, we deduce that for any $q\in S_n$ commuting with $p$, we have $d_H(q,r_{p,c})\geqslant 2a/n$.

This example is rather general (our $p$ is an arbitrary permutation) and shows also that
the study of $\mathcal F$-stability as we performed in~\cite{ArPau} is very different from investigations of constraint $\mathcal F$-stability as
we initiate in the present paper. Indeed, one of the main results of~\cite{ArPau} is $\mathcal F^{sof}$-stability of the commutator (equation). In contrast,
the preceding example shows that fixing one of the commuting elements (i.e. imposing the constraint) change the $\mathcal F^{sof}$-stability property drastically.
In the next subsection, we characterize permutations which are constraint $\mathcal F^{sof}$-stable, when considering the commutator equation.

\subsection{Stability of centralizer equation}
\begin{de}[Stable centralizer]
Let $(p_k)_{k\in\nz}\in\Pi_{k\in\nz} S_{n_k}$ be a sequence of permutations and $p=\Pi_{k\to\omega}p_k\in\Pi_{k\to\omega}S_{n_k}$ its image in the universal sofic group. We say that $(p_k)_{k\in\nz}$ has \emph{stable centralizer}  in permutations if for any $q\in\Pi_{k\to\omega}S_{n_k}$ such that $pq=qp$ there exists $q_k\in S_{n_k}$ such that $q=\Pi_{k\to\omega}q_k$ and $p_kq_k=q_kp_k$ for every $k\in\nz$. 
\end{de}

Thus, stable centralizer is an instance of constraint $\mathcal F^{sof}$-stability for the commutator equation:
$\ell=1, m=1, W=[a,x],$ and $G=\mathbb Z \times \mathbb Z=\langle a,x \mid [a,x]=1\rangle $ in the notation of Definitions~\ref{def:cstab} and~\ref{def:gcstab}.

\begin{te}\label{thm:centra}
A sequence $(p_k)_{k\in\nz}\in\Pi_{k\in\nz} S_{n_k}$ has stable centralizer if and only if $$cyc_\infty(\Pi_{k\to\omega}p_k)=0.$$
\end{te}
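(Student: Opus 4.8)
The plan is to prove the two implications separately: necessity uses the twisted permutations $r_{p,c}$ from the previous subsection, and sufficiency is a ``rounding'' argument applied to the permutations that approximately centralize $p_k$.

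\textbf{Necessity.} Suppose $\eta:=cyc_\infty(p)>0$, where $p=\Pi_{k\to\omega}p_k$. Since $cyc_\infty(p)=1-\sum_i cyc_i(p)$ and $cyc_i(p)=\lim_{k\to\omega}cyc_i(p_k)$, for every $L$ the $\omega$-limit of the proportion of $\{1,\dots,n_k\}$ lying in cycles of $p_k$ of length $>L$ equals $1-\sum_{i\leqslant L}cyc_i(p)\geqslant\eta$. A routine diagonalization then yields a map $k\mapsto L_k$ with $L_k\to\infty$ along $\omega$ and, for $k$ in a set of $\omega$, a family $\mathcal C_k$ of cycles of $p_k$, each of length $>L_k$, with $\sum_{c\in\mathcal C_k}|c|\geqslant\tfrac{\eta}{2}\,n_k$. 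Put $q_k:=\prod_{c\in\mathcal C_k}r_{p_k,c}$ (a product over cycles with pairwise disjoint supports) and $q:=\Pi_{k\to\omega}q_k$. As $d_H(p_kq_k,q_kp_k)=3|\mathcal C_k|/n_k\leqslant 3/L_k\to0$ along $\omega$, the element $q$ centralizes $p$. On the other hand, for any $q_k'\in S_{n_k}$ with $q_k'p_k=p_kq_k'$ and any $c\in\mathcal C_k$, inspection of $r_{p_k,c}$ gives $|\{x\in c:q_k'(x)\neq q_k(x)\}|\geqslant|c|/4$: either $q_k'$ preserves $c$, so $q_k'|_c$ is a rotation of $c$ and $r_{p_k,c}$ differs from every rotation of $c$ on at least $2\lfloor|c|/3\rfloor$ points; or $q_k'$ moves $c$ off itself, whereas $r_{p_k,c}$ keeps the last $|c|-2\lfloor|c|/3\rfloor$ points of $c$ fixed inside $c$. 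Summing over the disjoint cycles in $\mathcal C_k$ gives $d_H(q_k,q_k')\geqslant\eta/8$ for $k$ in a set of $\omega$; hence no sequence $(q_k')$ of permutations with $q_k'p_k=p_kq_k'$ satisfies $q=\Pi_{k\to\omega}q_k'$, so $(p_k)$ does not have stable centralizer.

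\textbf{Sufficiency.} Assume $cyc_\infty(p)=0$; let $q=\Pi_{k\to\omega}q_k$ centralize $p$ and set $\delta_k:=d_H(p_kq_k,q_kp_k)$, so $\delta_k\to0$ along $\omega$. Fix $\varepsilon>0$ and, using $\sum_i cyc_i(p)=1$, choose $L$ with $1-\sum_{i\leqslant L}cyc_i(p)<\varepsilon$. Call a cycle $c$ of $p_k$ \emph{good} if $|c|\leqslant L$ and $q_kp_k(x)=p_kq_k(x)$ for all $x\in c$, and let $Z_k$ be the union of the good cycles. Since each non-good cycle of length $\leqslant L$ contains a point where $q_k$ fails to intertwine $p_k$, one gets $|\{1,\dots,n_k\}\setminus Z_k|\leqslant\big(1-\sum_{i\leqslant L}cyc_i(p_k)\big)n_k+L\delta_k n_k$, whence $\lim_{k\to\omega}|\{1,\dots,n_k\}\setminus Z_k|/n_k\leqslant\varepsilon$. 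On a good cycle $c$ the relation $q_kp_k=p_kq_k$ forces $q_k(c)$ to be again a single $p_k$-cycle of length $|c|$, so $Z_k':=q_k(Z_k)$ is a union of $p_k$-cycles having, for each $i$, as many cycles of length $i$ as $Z_k$; consequently the complements $\{1,\dots,n_k\}\setminus Z_k$ and $\{1,\dots,n_k\}\setminus Z_k'$ also have equal numbers of cycles of each length. There is therefore a bijection between these complements that commutes with $p_k$, and gluing it with $q_k|_{Z_k}$ produces $\tilde q_k\in S_{n_k}$ with $\tilde q_kp_k=p_k\tilde q_k$ and $d_H(q_k,\tilde q_k)\leqslant|\{1,\dots,n_k\}\setminus Z_k|/n_k$. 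Writing $\rho_k$ for the distance from $q_k$ to the (finite) centralizer of $p_k$ in $S_{n_k}$, this shows $\lim_{k\to\omega}\rho_k\leqslant\varepsilon$ for every $\varepsilon>0$, hence $\lim_{k\to\omega}\rho_k=0$; taking $\tilde q_k$ in the centralizer of $p_k$ with $d_H(q_k,\tilde q_k)=\rho_k$ then exhibits $q$ as the image of a sequence of permutations centralizing $p_k$, so $(p_k)$ has stable centralizer.

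\textbf{Main obstacle.} The delicate step is the sufficiency direction: one must upgrade the metric information ``$\delta_k\to0$'' into the combinatorial statement that $q_k$ \emph{exactly} intertwines $p_k$ on all but an $\omega$-negligible fraction of points, which is precisely where $cyc_\infty(p)=0$ is used (it prevents a handful of very long cycles from carrying non-negligible mass while containing a single mismatch), and then verify that the bookkeeping of cycle lengths allows the small remaining error to be repaired without leaving the centralizer. The necessity direction is essentially the $r_{p,c}$-example of the previous subsection, the only extra care being the lower bound against \emph{all} centralizing permutations, including those permuting equal-length cycles of $p_k$.
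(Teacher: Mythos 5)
Your proof is correct and follows essentially the same approach as the paper. The necessity direction uses the same $r_{p,c}$ gadget and the same idea that a positive mass of points in arbitrarily long cycles can carry an almost-commuting perturbation that no exact centralizer can approximate; you organize this via a diagonalization to get $L_k\to\infty$, whereas the paper splits the cycles of $p_k$ into a retained family $P(k)$ and a discarded family $E(k)$ and shows the number (but not the total length) of discarded cycles is $\omega$-negligible --- two bookkeeping variants of the same idea. The sufficiency direction is essentially identical to the paper's: threshold the cycle length, declare a short cycle good when $q_k$ exactly intertwines on it, observe that the multiset of cycle lengths of $Z_k$ and $q_k(Z_k)$ match so their complements can be matched $p_k$-equivariantly, and glue; your closing device of passing to the distance $\rho_k$ from $q_k$ to the centralizer cleanly removes the dependence of the repaired permutation on $\varepsilon$.
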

\begin{proof}
\textbf{Direct implication.} 
Let us first consider $(p_k)_{k\in\nz}\in\Pi_{k\in\nz} S_{n_k}$ such that $cyc_\infty(p)>0$, where $p=\Pi_{k\to\omega}p_{k}$. Strictly positive $cyc_\infty(p)$ can happen if there are fewer and fewer cycles in $p_k$, as $k\to\omega$, that occupy the same space. In order to see this, we want to split the cycles of $p_k$ in two categories: those that add to $\sum_{i\in\nz^*}cyc_i(p)$ and those that should be considered as part of $cyc_\infty(p)$. The permutation $p_k$ has exactly $\frac{cyc_i(p_k)\cdot n_k}{i}$ cycles of length $i$. If this number is much larger than $\frac{cyc_i(p)\cdot n_k}{i}$ then some of these cycles have to be discarded towards $cyc_\infty(p)$. The formal definition is as follows. For $i,k\in\nz$ define:
\[c(i,k)=\min\left\{ \frac{cyc_i(p_k)\cdot n_k}{i},\lfloor\frac{cyc_i(p)\cdot n_k}{i}\rfloor\right\}.\]
Let $P(k)$ be a collection of cycles of $p_k$ that contains exactly $c(i,k)$ cycles of length $i$ and let
$E(k)$ be the collection of the remaining cycles. The goal here is to prove that $\lim_{k\to\omega}e(k)/n_k=0$, where $e(k)=Card(E(k))$ (this is similar to the proof of Elek and Szabo that numbers $cyc_i(p)$ completely describe the conjugacy class of $p$ inside $\Pi_{k\to\omega}S_{n_k}$). Let $e(i,k)$ be the number of cycles of length $i$ in $E(k)$, such that $e(k)=\sum_ie(i,k)$. We know from the definition of $cyc_i(p)$ that $\lim_{k\to\omega}e(i,k)/n_k=0$. Then for any $T\in\nz^*$:
\[\lim_{k\to\omega}\frac{e(k)}{n_k}=\lim_{k\to\omega}\sum_{i<T}\frac{e(i,k)}{n_k}+ \lim_{k\to\omega}\sum_{i\geqslant T}\frac{e(i,k)}{n_k}\leqslant\frac1T.\]
This inequality holds for any $T\in\nz^*$ so indeed $\lim_{k\to\omega}e(k)/n_k=0$. In conclusion, cycles in $E(k)$ are trivial in number, but not in size, as their total relative length is $cyc_\infty(p)$. It is this phenomena that makes $p$ unstable with respect to its centraliser.

We now construct $q=\Pi_{k\to\omega}q_k$ to contradict the stable centraliser property of $p$. Define $q_k=\Pi_{c\in E(k)}r_{p_k,c}$. Then:
\[d_H(p_kq_k,q_kp_k)=\sum_{c\in E(k)}d_H(p_kr_{p_k,c},r_{p_k,c}p_k)=\sum_{c\in E(k)}\frac3{n_k}=\frac{3e(k)}{n_k}.\]
As $\lim_{k\to\omega}e(k)/n_k=0$, it follows that $pq=qp$. Let now, for every $k\in\nz$, $s_k\in S_{n_k}$ be a permutation commuting with $p_k$. We want to evaluate $d_H(q_k,s_k)$. For $c\in E(k)$, we have $q_k(c)=c$. The permutation $s_k$ is sending cycles of $p_k$ into cycles of $p_k$, as it is commuting with $p_k$. Therefore, in order to minimise $d_H(q_k,s_k)$, we can assume that $s_k(c)=c$. By the definition of $q_k$, for any $c\in E(k)$, the permutations $s_k$ and $q_k$ have to differ on at least $2\cdot \lfloor {Card(c)}/3\rfloor$ points. Then:
\[d_H(q_k,s_k)\geqslant\sum_{c\in E(k)}\frac2{n_k}\lfloor \frac{Card(c)}3\rfloor>\frac{2\sum_{c\in E(k)}Card(c)}{3n_k}-\frac{e(k)}3.\]
We can estimate the limit $\lim_{k\to\omega}d_H(q_k,s_k)\geqslant 2/3\cdot cyc_\infty(p)>0$. So $q\in\Pi_{k\to\omega}S_{n_k}$ cannot be represented by permutations exactly commuting with $p_k$.

\textbf{Reverse implication.}
Again $p=\Pi_{k\to\omega}p_k$ and we assume now that $cyc_\infty(p)=0$. It follows that $\sum_{i\in\nz^*}cyc_i(p)=1$. Let $q\in\Pi_{k\to\omega}S_{n_k}$ be such that $pq=qp$ and choose $q_k\in S_{n_k}$ such that $q=\Pi_{k\to\omega}q_k$.

Fix $\ve>0$ and let $i\in\nz$ be such that $\sum_{j\leqslant i}cyc_j(p)<1-\ve/3$. There exists $F\in\omega$ such that for any $k\in F$:
\[\sum_{j\leqslant i}cyc_j(p_k)<1-\ve/2\mbox{ and } d_H(p_kq_k,q_kp_k)<\ve/2i.\]
Fix such a $k\in F$. Let $c_1,\ldots, c_t$ be the cycles of $p_k$ of length less or equal to $i$. By the above condition, at most $\ve/2\cdot n_k$ points are outside these cycles. We call a cycle $c_s$ \emph{good} if $p_kq_k(x)=q_kp_k(x)$ for any $x\in c_s$, and \emph{bad} otherwise. As $d_H(p_kq_k,q_kp_k)<\ve/2i$, there are at most $\ve/i\cdot n_k/2$ bad cycles in $p_k$.

Let $c_s$ be a good cycle. Then $q_k(c_s)$ is another cycle in $p_k$ of equal length. This defines a partial map $\vp:\{1,\ldots, t\}\to\{1,\ldots,t\}$, $q_k(c_s)=c_{\vp(s)}$. We can extend this to $\bar\vp:\{1,\ldots, t\}\to\{1,\ldots,t\}$, a total map such that $Card(c_s)=Card(c_{\bar\vp(s)})$.

We now construct $\bar q_k$ as follows: on a good cycle $c_s$ we let $\bar q_k=q_k$; on a bad cycle $c_s$ we choose a bijection such that $\bar q_k(c_s)=c_{\bar\vp(s)}$ and $\bar q_k$ commutes with $p_k$ on $c_s$; outside of cycles $c_1,\ldots,c_t$ we let $\bar q_k$ be the identity. By construction $\bar q_kp_k=p_k\bar q_k$ and $d_H(q_k,\bar q_k)\leqslant [(\ve/i\cdot n_k/2)i+\ve/2\cdot n_k]/n_k=\ve$.
\end{proof}

\begin{bibdiv}
\begin{biblist}

\bib{ArPau}{article}{
   author={Arzhantseva, Goulnara},
   author={P\u{a}unescu, Liviu},
   title={Almost commuting permutations are near commuting permutations},
   journal={Journal of Functional Analysis},
   volume={269},
   date={2015},
   pages={745--757},
}

\bib{ElSza}{article}{
   author={Elek, G{\'a}bor},
   author={Szab{\'o}, Endre},
   title={Hyperlinearity, essentially free actions and $L^2$-invariants.
   The sofic property},
   journal={Math. Ann.},
   volume={332},
   date={2005},
   number={2},
   pages={421--441},
}

\bib{GSM}{article}{
   author={Gorenstein, Daniel},
   author={Sandler, Reuben},
   author={Mills, William H.},
   title={On almost-commuting permutations},
   journal={Pacific J. Math.},
   volume={12},
   date={1962},
   pages={913--923},
}

\bib{M}{article}{
   author={Mills, William H.},
   title={An application of linear programming to permutation groups},
   journal={Pacific J. Math.},
   volume={13},
   date={1963},
   pages={197--213},
}

\bib{N}{article}{
   author={Neumann, B. H.},
   title={Adjunction of elements to groups},
   journal={J. London Math. Soc.},
   volume={s1-18},
   issue={1}
   date={1943},
   pages={4--11},
}

\bib{Pa1}{article}{
   author={P\u{a}unescu, Liviu},
   title={A convex structure on sofic embeddings},
   journal={Ergod. Th. Dynam. Sys.},
   volume={34},
   issue={4},
   date={2014},
   pages={1342-1352},
}

\bib{Pa}{article}{
   author={P\u{a}unescu, Liviu},
   title={Convex structures revisited},
   journal={Ergod. Th. Dynam. Sys.},
   volume={36},
   issue={5},
   date={2016},
   pages={1596-1614},
}

\end{biblist}
\end{bibdiv}
\end{document}